\def\titlerunning#1{\gdef\titrun{#1}}
\def\author#1{\gdef\autrun{\def\and{\unskip, }#1}\gdef\@author{#1}}
\def\address#1{{\def\and{\\\hspace*{18pt}}\renewcommand{\thefootnote}{}%
\footnote {#1}}%
\markboth{\autrun}{\titrun}}
\def\email#1{\hspace*{4pt}{\em e-mail}: #1}
\def\MSC#1{{\renewcommand{\thefootnote}{}%
\footnote{\emph{Mathematics Subject Classification (2020):} #1}}}
\def\keywords#1{\par\medskip
\noindent\textbf{Keywords:} #1}
\newtheorem{theorem}{Theorem}[section]
\newtheorem{prop}[theorem]{Proposition}
\newtheorem{cor}[theorem]{Corollary}
\newtheorem{lemma}[theorem]{Lemma}
\newtheorem{defin}[theorem]{Definition}
\theoremstyle{definition}
\newtheorem{remark}[theorem]{Remark}
\newtheorem{exa}[theorem]{Example}
\numberwithin{equation}{section}
\def\0{\mathbf 0}
\def\eps{\varepsilon}
\def\cC{\mathscr C}
\def\cI{\mathcal I}
\def\cJ{\mathcal J}
\def\cL{\mathcal L}
\def\cP{\mathcal P}
\def\cQ{\mathcal Q}
\def\cD{\mathcal D}
\def\cU{\mathcal U}
\def\cT{\mathcal T}
\def\cR{\mathcal R}
\def\cS{\mathcal S}
\def\cX{\mathcal X}
\def\cY{\mathcal Y}
\def\cZ{\mathcal Z}
\def\PG{{\rm PG}}
\def\F{{\mathbb F}}
\def\GL{{\rm GL}}
\def\PGaL{{\rm P\Gamma L}}
\begin{document}

%%%%% To ease editing, add:

\baselineskip=16pt

%%%%%%%%%%%%%%%%

%% In the running head, give an abbreviation of the title.
\titlerunning{}

\title{On line-parallelisms of $\PG(3, q)$}

\author{Francesco Pavese \and Paolo Santonastaso}

\date{}

\maketitle

\address{F. Pavese, P. Santonastaso: Dipartimento di Meccanica, Matematica e Management, Politecnico di Bari, Via Orabona 4, 70125 Bari, Italy; \email{francesco.pavese@poliba.it, paolo.santonastaso@poliba.it}
}

\bigskip

\MSC{Primary 51E23; Secondary 05B40; 05B25}

%%%%%%%%

\begin{abstract}
Let $\PG(3, q)$ denote the three-dimensional projective space over the finite field with $q$ elements. A {\em line-spread} of $\PG(3, q)$ is a collection $\cS$ of mutually skew lines such that every point of $\PG(3, q)$ lies on exactly one line of $\cS$. A {\em parallelism} of $\PG(3, q)$ is a set $\Pi$ of mutually skew line-spreads of $\PG(3, q)$ such that every line of $\PG(3, q)$ is contained in precisely one line-spread of $\Pi$. %A {\em regulus} of $\PG(3,q)$ is the set of transversals to three pairwise skew lines, and consists of $q +1$ pairwise skew lines. A spread $\cS$ is {\em regular} or {\em Desarguesian} if each line not in $\cS$ meets $\cS$ in the lines of a regulus. A {\em Hall spread} is obtained from a Desarguesian spread by switching a regulus with its opposite  regulus. 
For a Desarguesian spread $\cD$ and an elementary abelian group $E$ of order $q^2$ that stabilizes $\cD$ and one of its lines, let $\cT$ be the class of parallelisms of $\PG(3, q)$ admitting $E$, and comprising $\cD$ and $q^2+q$ Hall spreads, each of which is obtained by switching one of the $q^2+q$ reguli of $\cD$ through its $E$-fixed line. 
In this paper, the parallelisms in $\cT$ are characterized geometrically and enumerated. Moreover, it is shown that $\cT$ contains at least $\Theta(q^{q-1} q!)$ mutually inequivalent parallelisms for $q$ even, and at least $\Theta(q^{2q-3})$ mutually inequivalent parallelisms when $q$ is odd.

\keywords{line-spread;  line-parallelism; packing; Desarguesian spread; Hall spread.}
\end{abstract}

\section{Introduction}

Let $\PG(3, q)$ denote the three-dimensional projective space over the finite field with $q$ elements. A {\em line-spread} of $\PG(3, q)$ is a collection $\cS$ of mutually skew lines such that every point of $\PG(3, q)$ lies on exactly one line of $\cS$. A {\em parallelism} of $\PG(3, q)$ is a set $\Pi$ of mutually skew line-spreads of $\PG(3, q)$ such that every line of $\PG(3, q)$ is contained in precisely one line-spread of $\Pi$. A {\em regulus} of $\PG(3,q)$ is the set of transversals to three pairwise skew lines, and consists of $q +1$ pairwise skew lines. A spread $\cS$ is {\em regular} or {\em Desarguesian} if each line not in $\cS$ meets $\cS$ in the lines of a regulus. A {\em Hall spread} is obtained from a Desarguesian spread by switching a regulus with its opposite  regulus. The first examples of parallelisms of $\PG(3, q)$ were obtained by Denniston \cite{D1972} and Beutelspacher \cite{beutelspacher1974parallelisms}. Infinite families of parallelisms of $\PG(3, q)$ consisting of regular spreads were constructed by Penttila and Williams in \cite{PW1998} in the case when $q \equiv 2 \pmod{3}$. Many authors have investigated and exhibited construction techniques for parallelisms of $\PG(3, q)$, see e.g. \cite{J2000, J2003, J2008}. For further results, the reader is referred to \cite{J2010} and references therein. Some sporadic examples obtained with the aid of a computer can be found in \cite{Prince1998, Betten, Topalova}. Most of the known infinite families of parallelisms have one Desarguesian spread $\cD$ and admit an elementary abelian group $E$ of order $q^2$ that stabilizes $\cD$ and one of its lines. In the case when $q$ is even, these parallelisms comprise the Desarguesian spread $\cD$ and $q^2+q$ Hall spreads, each of which is obtained by switching one of the $q^2+q$ reguli of $\cD$ through its $E$-fixed line. Other examples are obtained from parallelisms of this type by means of a derivation process \cite[Theorem 175]{J2010}.  

\subsection{Our Contribution}
For a Desarguesian spread $\cD$ of $\PG(3,q)$, let $E$ be an elementary abelian group of order $q^2$ that stabilizes $\cD$ and one of its lines. Consider the class $\cT$ of parallelisms of $\PG(3, q)$ admitting $E$ as an automorphism group, and comprising $\cD$ and $q^2+q$ Hall spreads, each of which is obtained by switching one of the $q^2+q$ reguli of $\cD$ through its $E$-fixed line. In this paper, we present a geometric method that allows us to characterize and enumerate all the parallelisms in $\cT$. Precisely, we prove that there is a correspondence between certain sets of $q+1$ Bear pencils within the larger space $\PG(3,q^2)$, referred to as \emph{good sets}, and parallelisms in $\cT$: we establish that every good set determines a parallelism, and conversely, each parallelism can be described explicitly by a corresponding good set.
Finally, using this combinatorial-geometric framework, it is shown that the class $\cT$ contains at least $\Theta(q^{q-1} q!)$ mutually inequivalent parallelisms for $q$ even, and at least $\Theta(q^{2q-3})$ mutually inequivalent parallelisms when $q$ is odd.

\section{Preliminary results}

Let $\F_{q^n}$ be the finite field of order $q^n$, where $q = p^m$ and $p$ is a prime. We denote by $\PG(r, q^n)$ the $r$-dimensional projective space over $\F_{q^n}$, equipped with homogeneous projective coordinates $(x_1, \dots, x_{r+1})$. The $q$-order subgeometries of $\PG(3,q^2)$ are also referred to as \emph{Baer subgeometries}, and their subspaces as \emph{Baer subspaces}.
%Let $N$ denote the norm function, defined as $N(x) = x^{q+1}$, where $x \in \F_{q^2}$. 
We will represent projectivities of $\PG(3, q^2)$ with elements in $\GL(4, q^2)$, with matrices acting on the left.

A notable example of line-spread of $\PG(3, q)$ is the {\em Desarguesian line-spreads}, which can be constructed as follows; see \cite{segre1964teoria}. Let $\Sigma \simeq \PG(3,q)$ be a subgeometry of $ \PG(3,q^2)$, such that $\Sigma$ is the set of fixed points of a semilinear collineation $\tau$ of $\PG(3,q^2)$ of order $2$. Let $\ell$ be a line of $\PG(3,q^2)$ such that $\ell$ and $\ell^{\tau}$ are skew, or equivalently, they span $\PG(3,q^2)$. For each point $P$ of $\ell$, define the line $r_P=\langle P,P^{\tau}\rangle_{q^2}$ of $\PG(3,q^2)$. 

\begin{lemma} [see \textnormal{\cite[Lemma 1]{lunardon1999normal}}] \label{lm:subspacesubgeo} Let $S$ be a subspace of $\PG(3,q^2)$. Then $\dim(S \cap \Sigma)=\dim(S)$ if and only if $S^{\tau}=S$. 
\end{lemma}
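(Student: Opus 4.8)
The plan is to pass from projective subspaces to the underlying vector subspaces of $V=\F_{q^2}^4$ and to reformulate the statement as an instance of Galois descent along the quadratic extension $\F_{q^2}/\F_q$. Choosing homogeneous coordinates adapted to $\Sigma$ --- which is possible precisely because $\Sigma$ is the full fixed-point set of the order-$2$ semilinear collineation $\tau$ --- one may assume that $\tau$ is induced by the coordinate-wise Frobenius $\sigma\colon(x_1,\dots,x_4)\mapsto(x_1^q,\dots,x_4^q)$, so that $\Sigma$ corresponds to the $\F_q$-subspace $V_0=\F_q^4$ of $\sigma$-fixed vectors. Let $W\le V$ be the $\F_{q^2}$-subspace of dimension $\dim(S)+1$ whose nonzero vectors represent the points of $S$, and put $W_0:=W\cap V_0$. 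The points of $S\cap\Sigma$ are exactly those represented by the nonzero vectors of $W_0$, whence $\dim(S\cap\Sigma)=\dim_{\F_q}(W_0)-1$. The lemma thus reduces to the equivalence $\dim_{\F_q}(W\cap V_0)=\dim_{\F_{q^2}}(W)$ if and only if $\sigma(W)=W$.

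For the direction $(\Leftarrow)$, assume $\sigma(W)=W$ and write $\F_{q^2}=\F_q(\alpha)$, so that $V=V_0\oplus\alpha V_0$ and every $w\in W$ decomposes uniquely as $w=w_0+\alpha w_1$ with $w_0,w_1\in V_0$. Since $\sigma(w)=w_0+\alpha^q w_1$ also lies in $W$, the two $\F_{q^2}$-combinations $\alpha^q w-\alpha\,\sigma(w)=(\alpha^q-\alpha)w_0$ and $w-\sigma(w)=(\alpha-\alpha^q)w_1$ belong to $W$; dividing by the nonzero scalar $\alpha^q-\alpha$ (here $\alpha^q\neq\alpha$ because $\alpha\notin\F_q$) shows $w_0,w_1\in W\cap V_0=W_0$. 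Hence $W=\langle W_0\rangle_{\F_{q^2}}$. I would stress that this single computation is uniform in the characteristic, sidestepping the usual split into the odd case (a trace-zero generator $\alpha$) and the even case (an Artin--Schreier generator), since no division by $2$ is involved. Finally, $\F_q$-independent vectors of $V_0$ remain $\F_{q^2}$-independent, so a spanning set of $W$ drawn from $W_0$ gives $\dim_{\F_q}(W_0)=\dim_{\F_{q^2}}(W)$.

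For the direction $(\Rightarrow)$, suppose $\dim_{\F_q}(W_0)=\dim_{\F_{q^2}}(W)=d$ and pick an $\F_q$-basis $w_1,\dots,w_d$ of $W_0\subseteq V_0$. By the same remark on $V_0$ these $d$ vectors are $\F_{q^2}$-independent, so the dimension count forces them to be an $\F_{q^2}$-basis of $W$; as each $w_i\in V_0$ is fixed by $\sigma$, the subspace $W$ is spanned by $\sigma$-fixed vectors, whence $\sigma(W)=W$.

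The only genuinely delicate point is the initial reduction: one must justify that a semilinear involution whose fixed locus is a Baer subgeometry $\PG(3,q)$ can be normalised to the standard Frobenius, i.e.\ that $\Sigma$ may be taken as the rational points $\PG(3,q)\subseteq\PG(3,q^2)$. This is a standard fact about Baer subgeometries (take a projective frame of $\Sigma$ as coordinate frame of $\PG(3,q^2)$), and once it is in place the remaining linear algebra is routine. The uniform two-combination trick in the $(\Leftarrow)$ direction is what keeps the whole argument free of any characteristic-dependent case analysis.
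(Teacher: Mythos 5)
Your argument is correct. Note first that the paper does not actually prove this lemma: it is imported verbatim as \cite[Lemma~1]{lunardon1999normal}, so there is no internal proof to compare against. Your proof is the standard Galois-descent argument for that result, and it is sound: the reduction $\dim(S\cap\Sigma)=\dim_{\F_q}(W\cap V_0)-1$ is right; the combination $\alpha^q w-\alpha\,\sigma(w)=(\alpha^q-\alpha)w_0$ together with $w-\sigma(w)=(\alpha-\alpha^q)w_1$ correctly extracts the two $V_0$-components inside $W$ without any characteristic split; and the converse direction via an $\F_q$-basis of $W_0$ that is automatically $\F_{q^2}$-independent is exactly the right observation (it also shows the a priori inequality $\dim_{\F_q}(W_0)\le\dim_{\F_{q^2}}(W)$, so the hypothesis of the forward direction is the extremal case). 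The one step you flag as delicate --- normalising $\tau$ to the coordinatewise Frobenius --- is indeed the only nontrivial input: a frame of $\Sigma$ is a frame of $\PG(3,q^2)$, a collineation fixing a frame pointwise is induced by a field automorphism, and the only nontrivial automorphism of $\F_{q^2}$ of order $2$ is $x\mapsto x^q$, whose fixed locus is then forced to coincide with $\Sigma$. With that in place your proof is complete and self-contained, which is arguably more than the paper offers for this statement.
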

Note that $r_{P}$ is fixed by $\tau$ and so, by Lemma \ref{lm:subspacesubgeo} intersects $\Sigma$ in a subline. Moreover, two lines $r_P$ and $r_{P'}$, with $P, P' \in \ell$, $P \ne P'$, are disjoint, otherwise the subspace $\langle r_P, r_{P'} \rangle_{q^2}$ is a plane, but this is impossible since the lines $\ell$ and $\ell^{\tau}$ are disjoint. it follows that 
\[
\mathcal{D}=\{r_P \cap \Sigma \colon P \in \ell\}
\]
is a set of $q^2 + 1$ pairwise skew lines of $\PG(3,q)$. In other words, $\mathcal{D}$ forms a line spread of $\Sigma$, known as a \emph{Desarguesian line-spread}. The lines $\ell$ and $\ell^{\tau}$ are called the {\em transversal lines} or \emph{director spaces} of $\mathcal{D}$; see also \cite{lunardon1999normal}.

\subsection{The geometric setting}

%Let $\F_{q^2}$ be the finite field of order $q^2$, where $q = p^h$ and $p$ is a prime, and let $N$ denote the norm function, defined as $N(x) = x^{q+1}$, where $x \in \F_{q^2}$. Let $\PG(3, q^2)$ be the three-dimensional projective space over $\F_{q^2}$ where $(X_1, X_2, X_3 , X_{4})$ are projective homogeneous coordinates. We will represent projectivities of $\PG(3, q^2)$ with elements in $\GL(4, q^2)$, with matrices acting on the left. 
Let $U_i$ denote the point of $\PG(3,q^2)$ having $1$ in the $i$-th position and $0$ elsewhere, $1 \le i \le 4$, and let $t_1$, $t_2$ be the lines $\langle U_1, U_{2} \rangle_{q^2}$, $\langle U_{3}, U_{4} \rangle_{q^2}$, respectively. Let $\alpha \in \F_{q^2} \setminus \{ 0 \}$ and let $\tau_{\alpha}$ be the non-linear involution of $\PG(3,q^2)$ mapping the point $\left(x_1, x_2, x_3, x_{4}\right)$ to the point $\left(x_{3}^q, x_{4}^q, \alpha^{q+1} x_1^q, \alpha^{q+1} x_{2}^q\right)$. Then $\tau_{\alpha}$ is the composition of the projectivity represented by the matrix 
\begin{align*}
\begin{pmatrix}
0 & 0 & 1 & 0 \\
0 & 0 & 0 & 1 \\
\alpha^{q+1} & 0 & 0 & 0 \\
0 & \alpha^{q+1} & 0 & 0 
\end{pmatrix}
\end{align*}
with the non-linear involution sending the point $P = \left(x_1, x_2, x_3 , x_{4}\right)$ to the point $P^q = \left(x_1^q, x_2^q, x_3^q, x_{4}^q\right)$. The set of points fixed by $\tau_{\alpha}$ forms a Baer subgeometry $\Sigma_{\alpha}$ isomorphic to $\PG(3, q)$. In particular the points of $\Sigma_{\alpha}$ have homogeneous projective coordinates 
\begin{align*}
& \left(x, y, \alpha x^q, \alpha y^q\right), & \mbox{ where } x, y \in \F_{q^2} \mbox{ and } (x, y) \ne (0, 0). 
\end{align*}
Let $\beta \in \F_{q^2} \setminus \{ 0 \}$ be such that $\alpha \ne \beta$. Denote by $\Sigma_{\beta}$ the Baer subgeometry of $\PG(3,q^2)$ fixed pointwise by the non-linear involution $\tau_{\beta}$. Clearly, $\Sigma_{\alpha} = \Sigma_{\beta}$ if and only if $\alpha^{q+1} = \beta^{q+1}$. On the other hand, for any point $P \in t_1$, we have that $P^{\tau_{\alpha}} = P^{\tau_{\beta}}$ and, if $\alpha^{q+1} \ne \beta^{q+1}$, then the Baer subgeometry $\Sigma_{\alpha}$ is disjoint from $\Sigma_{\beta}$. 

For a subset $A \subseteq \F_q \setminus \{0\}$, we denote by $A^{-1}:=\left\{a^{-1}\colon a \in A\right\}$.

\begin{lemma} \label{lm:partitionnorm}
    Assume that $q \geq 5$ is odd. Then there exists a subset $A=\left\{a_1,\ldots,a_\frac{q-3}{2}\right\}$ of $\F_q \setminus \{0\}$ such that 
    \begin{itemize}
        \item $1,-1 \notin A$, 
        \item $-a_i \notin A$, for every $i \in \{1,\ldots,t\}$,
        \item $A \cap A^{-1}=\emptyset$ and
    \[
        \F_q \setminus \{0\} = \{\pm 1\} \cup A \cup A^{-1}.
        \]
        \end{itemize}
    %Moreover $t=\frac{q-3}{2}$.
\end{lemma}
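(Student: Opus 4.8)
The plan is to recast the three bullet conditions as a single \emph{simultaneous transversal} problem for two commuting involutions. Set $X := \F_q \setminus \{0,1,-1\}$, a set of $q-3$ elements, and note that any $A \subseteq X$ automatically satisfies $1,-1 \notin A$. On $X$ consider the inversion map $\iota\colon x \mapsto x^{-1}$ and the negation map $\nu\colon x \mapsto -x$. Both are fixed-point-free involutions of $X$: $x = x^{-1}$ forces $x \in \{\pm 1\}$ and $x = -x$ forces $x = 0$, all excluded. The condition $A \cap A^{-1} = \emptyset$ together with $\F_q \setminus \{0\} = \{\pm 1\} \cup A \cup A^{-1}$ says exactly that $A$ meets each inversion pair $\{x,x^{-1}\}$ in exactly one element, while $-a \notin A$ for $a \in A$ says that $A$ meets each negation pair $\{x,-x\}$ in \emph{at most} one element. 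Since there are $(q-3)/2$ negation pairs and we will have $|A| = (q-3)/2$, \emph{at most one} is forced to be \emph{exactly one}, so I seek a common transversal of the two pairings of $X$.

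The key observation is that $\iota$ and $\nu$ commute, as $\iota\nu(x) = (-x)^{-1} = -x^{-1} = \nu\iota(x)$. Hence $G := \langle \iota,\nu\rangle$ is a Klein four-group acting on $X$, and the orbit of $x$ is $\{x, x^{-1}, -x, -x^{-1}\}$. I would next read off the orbit sizes: an orbit has fewer than four elements only if a nontrivial element of $G$ fixes $x$, and since $\iota,\nu$ are fixed-point-free the only possibility is $\iota\nu(x) = -x^{-1} = x$, i.e. $x^2 = -1$. Therefore every orbit has size $4$, with the sole exception, when $q \equiv 1 \pmod 4$, of the orbit $\{i,-i\}$ of size $2$ formed by the two square roots $i$ of $-1$; here $i^{-1} = -i$, so this orbit is simultaneously a single inversion pair and a single negation pair. (Equivalently, $X$ with its two perfect matchings is a disjoint union of $4$-cycles together with one doubled edge, hence bipartite, and $A$ will be one color class.)

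The construction is then explicit. From each size-$4$ orbit $\{x,x^{-1},-x,-x^{-1}\}$ I place the \emph{diagonal} pair $\{x,-x^{-1}\}$ into $A$, and from the exceptional size-$2$ orbit, if present, I place the single element $i$. To verify the conclusion, observe that a size-$4$ orbit splits into inversion pairs $\{x,x^{-1}\},\{-x,-x^{-1}\}$ and into negation pairs $\{x,-x\},\{x^{-1},-x^{-1}\}$; the chosen pair $\{x,-x^{-1}\}$ contains exactly one member of each of these four pairs. As the orbit decomposition refines both pairings, $A$ meets every inversion pair and every negation pair of $X$ in exactly one point, giving at once $A \cap A^{-1} = \emptyset$, $\F_q\setminus\{0\} = \{\pm1\}\cup A \cup A^{-1}$, and $-a \notin A$ for all $a \in A$. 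Counting two elements per size-$4$ orbit yields $|A| = \frac{q-3}{2}$ in both cases ($2\cdot\frac{q-3}{4}$ when $q \equiv 3 \pmod 4$, and $2\cdot\frac{q-5}{4}+1$ when $q \equiv 1 \pmod 4$), matching $\{a_1,\dots,a_{(q-3)/2}\}$.

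The only real subtlety, and thus the main obstacle, is the bookkeeping in the case $q \equiv 1 \pmod 4$: one must confirm that the coincidence $i^{-1} = -i$ among the square roots of $-1$ neither breaks the transversal property nor corrupts the cardinality count, and, before invoking the orbit count, rule out any further collisions among $x,x^{-1},-x,-x^{-1}$ beyond the excluded $x \in \{\pm 1\}$ and the handled $x^2 = -1$. Once these are dispatched, the three displayed conditions are immediate, and $q \ge 5$ ensures $X \neq \emptyset$ so that $A$ is genuinely defined.
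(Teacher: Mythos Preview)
Your proof is correct and follows essentially the same approach as the paper: both decompose $\F_q\setminus\{0,\pm1\}$ into the orbits $\{c,-c,c^{-1},-c^{-1}\}$ of the Klein four-group generated by inversion and negation (with the degenerate two-element orbit $\{i,-i\}$ when $q\equiv 1\pmod 4$), and both select the ``diagonal'' pair $\{c,-c^{-1}\}$ from each four-orbit together with one square root of $-1$ in the exceptional case. Your group-theoretic phrasing is a clean abstraction of the paper's explicit case split, but the underlying construction is identical.
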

\begin{proof}
    Recall that $-1$ is a square in $\F_q$ if and only if $q \equiv 1 \pmod{4}$. \\
    First, assume that $q \equiv 1 \pmod{4}$. Then there exists $b \in \F_q \setminus \{0\}$ with $b^2 = -1$. Clearly, $b^{-1} = -b$. If $q = 5$, we set $A = \{b\}$ and cleary $A$ has the desired properties. So, let $q > 5$. For any $c \in \F_q^* \setminus \{0, \pm 1, \pm b\}$, the elements $c, -c, c^{-1}, -c^{-1}$ are all distinct. Thus, we have a partition:
    \[
    \{\pm 1\} \cup \{b,-b\} \cup \{c_1,-c_1,1/c_1,-1/c_1\} \cup \ldots \cup \{c_h,-c_h,1/c_h,-1/c_h\} = \F_q \setminus \{0\}, 
    \]
  with distinct $c_i \in \F_q \setminus \{0\}$, where $h = \frac{q - 5}{4}$. By defining
    \[
    A=\{b,c_1,-1/c_1,\ldots,c_h,-1/c_h\}
    \]
    we see that $A$ satisfies all required conditions, and its size is indeed $2h+1=\frac{q-3}{2}$.\\
   Now, suppose $q \equiv 3 \pmod{4}$. In this case, $-1$ is not a square, hence no element satisfies $b^2 = -1$. For any $c \in \F_q \setminus \{0, \pm 1\}$, the elements $c, -c, c^{-1}, -c^{-1}$ remain distinct. Thus, we have the partition:
    \[
    \{\pm 1\} \cup \{c_1,-c_1,1/c_1,-1/c_1\} \cup \ldots \cup \{c_h,-c_h,1/c_h,-1/c_h\} = \F_q \setminus \{0\}, 
    \]
    with distinct $c_i \in \F_q \setminus \{0\}$, where $h = \frac{q - 3}{4}$. The set 
    \[
    A=\{c_1,-1/c_1,\ldots,c_h,-1/c_h\}
    \]
    clearly fulfills all conditions, and its cardinality is $2h=\frac{q-3}{2}$.
\end{proof}
Assume throughout the paper that $q \ge 3$ and fix a set $\Lambda$ of $q-1$ elements in $\F_{q^2} \setminus \{0\}$ having pairwise distinct norm. In this way, as $\alpha$ ranges over the set $\Lambda$, we obtain $q - 1$ mutually disjoint subgeometries $\Sigma_\alpha$.
We fix the following partition of $\F_q \setminus \{0\}$ if $q$ is even
\begin{equation} \label{eq:definitionaieven} \left\{ 1 \right\} \cup \left\{a_1, \dots, a_t \right\} \cup \left\{a_1^{-1}, \dots, a_t^{-1} \right\} = \F_{q} \setminus \{0\},
\end{equation}
while if $q$ is odd
\begin{equation} \label{eq:definitionaiodd} \left\{ \pm 1 \right\} \cup \left\{a_1, \dots, a_t \right\} \cup \left\{a_1^{-1}, \dots, a_t^{-1} \right\} = \F_{q} \setminus \{0\},
\end{equation}
where the set $\{a_1,\ldots,a_t\}$ is as in Lemma \ref{lm:partitionnorm}, in the case where $q \ge 5$. Here,
\begin{align*}
    & t = 
    \begin{cases}
        \frac{q-2}{2} & \mbox{ if } q \mbox{ is even}, \\
        \frac{q-3}{2} & \mbox{ if } q \mbox{ is odd}.
    \end{cases}
\end{align*}
Let 
\begin{align*}
\cU = \{u \in \F_{q^2} \mid u^{q+1} = 1\}
\end{align*}
and set 
\begin{align*}
    & \eta \in \cU \cap \Lambda, \\
    & \cI := \left\{\alpha \in \Lambda \mid \alpha^{q+1} \in \left\{a_1, \dots, a_t \right\} \right\}, & \mbox{ if } q \mbox{ is even, } \\
    & \cI := 
    \left\{\alpha \in \Lambda \mid \alpha^{q+1} \in \left\{a_1, \dots, a_t \right\} \cup \{-1\} \right\}, & \mbox{ if } q \mbox{ is odd. }
\end{align*}

Observe that 
\begin{align*}
    & |\cI| = \begin{cases}
    \frac{q-2}{2} & \mbox{ if } q \mbox{ is even}, \\
        \frac{q-1}{2} & \mbox{ if } q \mbox{ is odd}, 
    \end{cases} \\
    & \eta \not\in \cI. 
    %& \alpha \not\in \cI, \mbox{ in the case when } q \mbox{ is odd and } \alpha^{q+1} = -1.
\end{align*}
Also, if $q$ is odd 
\begin{equation} \label{eq:oppositenonbelong}
    \alpha \in \cI \Rightarrow \beta \notin \cI, \mbox{ where }\beta^{q+1}=-\alpha^{q+1}.
\end{equation}

A straightforward calculation shows what follows.
\begin{lemma}\label{lemmasub}
$\Sigma_{\alpha}^{\tau_{\eta}} = \Sigma_{\beta}$, where $\beta^{q+1} = \frac{1}{\alpha^{q+1}}$.
\end{lemma}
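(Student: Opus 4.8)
The plan is to verify the identity by a direct computation on the coordinates of a generic point, exploiting the two special features of the map $\tau_\eta$: that $\eta$ lies in $\cU$, and that the coordinates of points of $\Sigma_\alpha$ are governed by the Frobenius $x \mapsto x^q$. First I would record that, since $\eta \in \cU$, we have $\eta^{q+1} = 1$, so $\tau_\eta$ sends $(x_1, x_2, x_3, x_4)$ to $(x_3^q, x_4^q, x_1^q, x_2^q)$; the scalar $\eta^{q+1}$ appearing in the general definition simply disappears. Next I would take an arbitrary point $P = (x, y, \alpha x^q, \alpha y^q)$ of $\Sigma_\alpha$ and apply $\tau_\eta$ to it. Using $x^{q^2} = x$ and $y^{q^2} = y$ (valid because $x, y \in \F_{q^2}$), the image simplifies to $P^{\tau_\eta} = (\alpha^q x, \alpha^q y, x^q, y^q)$.

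The key step is then to recognize this image as a point of $\Sigma_{\alpha^{-1}}$. Setting $X = \alpha^q x$ and $Y = \alpha^q y$, one computes $X^q = \alpha^{q^2} x^q = \alpha x^q$ (using $\alpha^{q^2} = \alpha$), whence the third coordinate $x^q$ of $P^{\tau_\eta}$ equals $\alpha^{-1} X^q$, and likewise the fourth equals $\alpha^{-1} Y^q$. Thus $P^{\tau_\eta} = (X, Y, \alpha^{-1} X^q, \alpha^{-1} Y^q)$ has exactly the form describing a point of $\Sigma_{\alpha^{-1}}$; equivalently, a short direct verification shows that $P^{\tau_\eta}$ is fixed by $\tau_{\alpha^{-1}}$ (here one multiplies through by $\alpha^q$ to match representatives in projective coordinates). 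Hence $P^{\tau_\eta}$ lies in $\Sigma_{\alpha^{-1}}$, and therefore $\Sigma_\alpha^{\tau_\eta} \subseteq \Sigma_{\alpha^{-1}}$.

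Finally, since $\tau_\eta$ is a bijection of $\PG(3, q^2)$ and both $\Sigma_\alpha$ and $\Sigma_{\alpha^{-1}}$ carry the same number of points, this inclusion is in fact an equality. Because the Baer subgeometry $\Sigma_\gamma$ depends only on $\gamma^{q+1}$, and $(\alpha^{-1})^{q+1} = 1/\alpha^{q+1} = \beta^{q+1}$ by hypothesis, we conclude $\Sigma_{\alpha^{-1}} = \Sigma_\beta$, which gives the claim. The argument is entirely routine, as the paper indicates; the only point requiring minor care is the interplay between the two Frobenius twists and the projective rescaling, which is precisely what forces the exponent $-1$ on $\alpha$ and hence the norm relation $\beta^{q+1} = 1/\alpha^{q+1}$.
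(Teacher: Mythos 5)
Your computation is correct and is exactly the ``straightforward calculation'' the paper alludes to (the paper gives no explicit proof): applying $\tau_\eta$ with $\eta^{q+1}=1$ to $(x,y,\alpha x^q,\alpha y^q)$ yields $(\alpha^q x,\alpha^q y,x^q,y^q)$, which one recognizes as a point of $\Sigma_{\alpha^{-1}}=\Sigma_\beta$, and the cardinality/bijectivity argument upgrades the inclusion to equality. No issues.
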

Therefore, $\Sigma_{\eta}$ is fixed pointwise by $\tau_{\eta}$. If $q$ is odd, $\tau_{\eta}$ fixes $\Sigma_{\alpha}$, with $\alpha^{q+1} = -1$, setwise, and permutes in pairs the remaining $q-3$ Baer subgeometries, whereas if $q$ is even, the $q-2$ Baer subgeometries distinct from $\Sigma_{\eta}$ are paired by $\tau_{\eta}$. Moreover, for $\alpha,\beta \in \Lambda \setminus \cU$, and $\alpha^{q+1} \neq -1,\beta^{q+1} \neq -1$ such that $\Sigma_{\alpha}^{\tau_{\eta}} = \Sigma_{\beta}$, we have 
\begin{align}
    \alpha \in \cI \iff \beta \not\in \cI. \label{indici}
\end{align}

For each $\alpha \in \Lambda$, the spaces $t_1$, $t_2$ are disjoint from $\Sigma_{\alpha}$ and $t_1^{\tau_{\alpha}} = t_2$. For every point $P \in t_1$, let 
\begin{align*}
r_P = \langle P, P^{\tau_{\alpha}} \rangle_{q^2}
\end{align*}
be the line joining the points $P$ and $P^{\tau_{\alpha}}$. Then $r_{P}$ is fixed by $\tau_{\alpha}$ and intersects $\Sigma_{\alpha}$ in a Baer subline. Hence 
\begin{align*}
\cD_{\alpha} = \{ r_P \cap \Sigma_{\alpha} \;\; | \;\; P \in t_1 \}
\end{align*} 
is a Desarguesian line-spread of $\Sigma_{\alpha}$, for each $\alpha \in \Lambda$, and the lines $t_1$, $t_2$ are the transversal lines to $\cD_{\alpha}$.
For a set $\cX$ consisting of (Baer) lines of $\Sigma_{\eta}$, we denote by $\overline{\cX}$ the union of the lines of $\cX$ extended over $\F_{q^2}$, i.e., 
\begin{align*}
& \overline{\cX} = \bigcup_{\substack{r \mbox{\scriptsize{ line of }} \PG(3, q^2) \\ r \cap \Sigma_{\eta} \in \cX}} r.
\end{align*}
Therefore 
\begin{align*}
\overline{\cD}_{\eta} = \bigcup_{P \in t_1} r_{P} = \bigcup_{\alpha \in \Lambda} \Sigma_{\alpha} \cup t_1 \cup t_2.
\end{align*}

\begin{lemma}\label{lemma1}
Let $\alpha, \beta \in \Lambda$. The following hold.
\begin{itemize}
\item[i)] A plane $\pi$ of $\PG(3, q^2)$ such that $\pi \cap \Sigma_{\alpha}$ is a Baer subplane meets $\Sigma_{\beta}$, $\alpha \ne \beta$, in a Baer subline belonging to $\cD_{\beta}$.
\item[ii)] A line $\ell$ of $\PG(3, q^2)$ such that $\ell \cap \Sigma_{\alpha}$ is a Baer subline not belonging to $\cD_{\alpha}$ is disjoint from $\Sigma_{\beta}$, $\alpha \ne \beta$.
\end{itemize}
\end{lemma}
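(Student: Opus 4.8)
The plan is to translate both statements, via Lemma \ref{lm:subspacesubgeo}, into assertions about $\tau$-invariance, and then to exploit the fact that the two non-linear involutions differ only by a linear diagonal map. Writing $\tau_{\gamma}(P)=M_{\gamma}P^q$ with $M_{\gamma}$ the displayed matrix, I would first record the identity $M_{\beta}=g_c M_{\alpha}$, where $g_c=\diag(1,1,c,c)$ and $c=\beta^{q+1}/\alpha^{q+1}$. Since norms lie in $\F_q$ and $\alpha,\beta\in\Lambda$ have distinct norms, $c\in\F_q\setminus\{1\}$, and consequently $\tau_{\beta}=g_c\,\tau_{\alpha}$. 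The whole point of this linearization is that on any object already fixed by $\tau_{\alpha}$ the involution $\tau_{\beta}$ acts as the ordinary linear collineation $g_c$, whose invariant subspaces are transparent: its eigenspaces are $t_1$ (eigenvalue $1$) and $t_2$ (eigenvalue $c$).

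For part (i), the hypothesis and Lemma \ref{lm:subspacesubgeo} give $\pi^{\tau_{\alpha}}=\pi$, hence $\pi^{\tau_{\beta}}=g_c(\pi^{\tau_{\alpha}})=g_c(\pi)$. I would first observe that $\pi$ contains neither $t_1$ nor $t_2$: if $t_1\subseteq\pi$ then $t_2=t_1^{\tau_{\alpha}}\subseteq\pi^{\tau_{\alpha}}=\pi$, which is impossible since $t_1,t_2$ span $\PG(3,q^2)$, and symmetrically for $t_2$. In dual coordinates $\pi=(a_1,a_2,a_3,a_4)$ this says $(a_1,a_2)\neq(0,0)$ and $(a_3,a_4)\neq(0,0)$, while $g_c(\pi)=(ca_1,ca_2,a_3,a_4)$; as $c\neq1$ these are non-proportional, so $\pi^{\tau_{\beta}}\neq\pi$ and $\pi\cap\pi^{\tau_{\beta}}$ is a line. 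This line is $\tau_{\beta}$-invariant, so Lemma \ref{lm:subspacesubgeo} shows $\pi\cap\Sigma_{\beta}=(\pi\cap\pi^{\tau_{\beta}})\cap\Sigma_{\beta}$ is a Baer subline. Finally, subtracting the two plane equations gives $\pi\cap\pi^{\tau_{\beta}}=\{a_1x_1+a_2x_2=0\}\cap\{a_3x_3+a_4x_4=0\}$, which meets $t_1$ in the point $(a_2,-a_1,0,0)$; a $\tau_{\beta}$-invariant line meeting $t_1$ in a point $P$ necessarily equals $r_P=\langle P,P^{\tau_{\beta}}\rangle$, so $\pi\cap\Sigma_{\beta}=r_P\cap\Sigma_{\beta}\in\cD_{\beta}$.

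For part (ii), again $\ell^{\tau_{\alpha}}=\ell$, hence $\ell^{\tau_{\beta}}=g_c(\ell)$. I would take two distinct points $A=(x,y,\alpha x^q,\alpha y^q)$ and $B=(z,w,\alpha z^q,\alpha w^q)$ of the Baer subline $\ell\cap\Sigma_{\alpha}$, spanning $\ell$ over $\F_{q^2}$. Since the extension of any line of $\cD_{\alpha}$ is one of the transversals $r_P$, which meets $t_1$, the condition $\ell\cap\Sigma_{\alpha}\notin\cD_{\alpha}$ is equivalent to $\ell\cap t_1=\emptyset$; examining when $sA+tB$ has vanishing last two coordinates shows this means the minor $xw-yz\neq0$. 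Now I would compute $\ell\cap g_c(\ell)$: equating $sA+tB$ with a point of $g_c(\ell)=\langle g_cA,g_cB\rangle$ and reading the first two coordinates forces the two parameter pairs to coincide (here $xw-yz\neq0$ is used), and then the last two coordinates yield $(1-c)(sx^q+tz^q)=(1-c)(sy^q+tw^q)=0$; since $c\neq1$ this forces $xw-yz=0$, a contradiction. Hence $\ell$ and $\ell^{\tau_{\beta}}=g_c(\ell)$ are skew, and since any point of $\ell\cap\Sigma_{\beta}$ would lie in $\ell\cap\ell^{\tau_{\beta}}$, I conclude $\ell\cap\Sigma_{\beta}=\emptyset$.

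The conceptual crux is the linearization $\tau_{\beta}=g_c\,\tau_{\alpha}$; once it is available, both parts reduce to short linear-algebra verifications. I expect the main obstacle to be the bookkeeping rather than any deep point: in (i) one must carefully rule out $t_1,t_2\subseteq\pi$ and then pin down the residual line as a transversal $r_P$, and in (ii) one must run the determinant argument showing $\ell$ and $g_c(\ell)$ are skew. It is worth emphasizing that this last step uses \emph{both} hypotheses in an essential way — $xw-yz\neq0$ (the failure of $\ell$ to be a spread line) and $c\neq1$ (the distinctness of $\Sigma_{\alpha}$ and $\Sigma_{\beta}$) — and it is precisely their interplay that makes the intersection empty.
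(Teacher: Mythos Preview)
Your argument is correct and genuinely different from the paper's. The key move you make is the linearization $\tau_{\beta}=g_c\,\tau_{\alpha}$ with $g_c=\diag(1,1,c,c)$, $c=\beta^{q+1}/\alpha^{q+1}\in\F_q\setminus\{1\}$, which reduces both parts to straightforward linear algebra in coordinates. The paper instead argues more synthetically: for (i) it first observes that the $\tau_{\alpha}$-invariant plane $\pi$ contains a unique spread line $r_P$, and then invokes a result of Bruen \cite{bruen1982intersection} stating that the number of planes meeting two Baer subgeometries each in a subplane equals the number of their common points; since $\Sigma_{\alpha}\cap\Sigma_{\beta}=\emptyset$, the plane $\pi$ cannot meet $\Sigma_{\beta}$ in a subplane, forcing $\pi\cap\Sigma_{\beta}=r_P\cap\Sigma_{\beta}$. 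Part (ii) is then deduced from (i) by embedding $\ell$ in such a plane. Your approach is more self-contained---it avoids the external citation and handles (ii) directly rather than via (i)---at the cost of a short explicit determinant computation; the paper's approach is shorter and coordinate-free but leans on the Bruen intersection result.
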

\begin{proof}
Since $\pi \cap \Sigma_{\alpha}$ is a Baer subplane, by Lemma \ref{lm:subspacesubgeo}, it follows that $\pi$ is fixed by $\tau_{\alpha}$. On the other hand, let $P \in \pi \cap t_1$. Then $P^{\tau_{\alpha}} \in \pi^{\tau_{\alpha}} = \pi$, which implies that the line $r_P = \langle P, P^{\tau_{\alpha}} \rangle_{q^2}$ is contained in $\pi$. Hence, for every $\beta \in \Lambda$, the Baer subline $r_P \cap \Sigma_{\beta} \in \cD_{\beta}$ is contained in $\pi \cap \Sigma_{\beta}$. Moreover, if $\alpha \ne \beta$, the number of planes such that $\pi \cap \Sigma_{\alpha}$ and $\pi \cap \Sigma_{\beta}$ are Baer subplanes equals the number of points belonging to both $\Sigma_{\alpha}$, $\Sigma_{\beta}$, see for instance \cite{bruen1982intersection}. Therefore, $r_P \cap \Sigma_{\beta} = \pi \cap \Sigma_{\beta}$, since $|\Sigma_{\alpha} \cap \Sigma_{\beta}| = 0$. This proves the first part. 

%Since $\pi \cap \Sigma_{\alpha_i}$ is a Baer subplane, and $\cD_i$ is a line-spread of $\Sigma_{\alpha_i}$, there exists $P \in t_1$ such that the Baer subline $r_{P} \cap \Sigma_{\alpha_i}$ is contained in  $\pi \cap \Sigma_{\alpha_i}$. Hence the Baer subline $r_P \cap \Sigma_{\alpha_j} \in \cD_j$ is contained in $\pi \cap \Sigma_{\alpha_j}$. Moreover, if $i \ne j$, the number of planes such that $\pi \cap \Sigma_{\alpha_i}$ and $\pi \cap \Sigma_{\alpha_j}$ are Baer subplanes equals the number of points belonging to both $\Sigma_{\alpha_i}$, $\Sigma_{\alpha_j}$, see for instance \cite{bruen1982intersection}. Therefore $r_P \cap \Sigma_{\alpha_j} = \pi \cap \Sigma_{\alpha_j}$, since $|\Sigma_{\alpha_i} \cap \Sigma_{\alpha_j}| = 0$. This proves the first part. 

Let $\ell$ be a line of $\PG(3, q^2)$ such that $\ell \cap \Sigma_{\alpha}$ is a Baer subline not belonging to $\cD_{\alpha}$ and let $\pi$ be a plane such that $\pi \cap \Sigma_{\alpha}$ is a Baer subplane with $\ell \cap \Sigma_{\alpha} \subset \pi \cap \Sigma_{\alpha}$. Then, if $\alpha \ne \beta$, we have just seen that $\pi \cap \Sigma_{\beta} = r_{P} \cap \Sigma_{\beta}$, for some $P \in t_1$. Therefore $|\ell \cap \Sigma_{\beta}| = 0$ 
\end{proof} 

\begin{lemma}\label{lemma2}
The following hold.
\begin{itemize}
\item[i)] $|\overline{\cD}_{\eta}| = (q^2+1)^2$.
\item[ii)] Let $\pi$ be a plane of $\PG(3, q^2)$, then $|\pi \cap \overline{\cD}_{\eta}| \in \{q^2+1, 2q^2+1\}$. If $|\pi \cap \overline{\cD}_{\eta}| = 2q^2+1$, then $r_P \subset \pi$, for some $P \in t_1$ and either $\pi \cap \overline{\cD}_{\eta} = r_{P} \cup t_i$, where $i \in \{1,2\}$, or $\pi \cap \overline{\cD}_{\eta} = r_P \cup \left( \pi \cap \Sigma_{\alpha} \right)$, where $\alpha \in \Lambda$. 
\end{itemize}
\end{lemma}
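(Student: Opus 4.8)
The plan is to exploit the structure of $\overline{\cD}_{\eta}$ recorded just before the statement: it is the union of the $q^2+1$ lines $r_P$ ($P \in t_1$), and as a point set it is the disjoint union $t_1 \cup t_2 \cup \bigcup_{\alpha \in \Lambda} \Sigma_{\alpha}$. For part i) I would simply use that the lines $r_P$ are pairwise skew (as already observed in the preliminary discussion) and each carries $q^2+1$ points, so that $|\overline{\cD}_{\eta}| = (q^2+1)(q^2+1) = (q^2+1)^2$.

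For the cardinality dichotomy in part ii) I would argue from the same partial-spread structure. In $\PG(3,q^2)$ a line either lies in the plane $\pi$ or meets it in exactly one point, and two disjoint lines cannot both lie in $\pi$, since they span the whole space. Hence at most one $r_P$ is contained in $\pi$. If none is, each $r_P$ contributes exactly one point, and these points are pairwise distinct by skewness, giving $|\pi \cap \overline{\cD}_{\eta}| = q^2+1$. If exactly one line $r_{P_0} \subseteq \pi$, the remaining $q^2$ lines contribute $q^2$ further points, all distinct and off $r_{P_0}$, giving $|\pi \cap \overline{\cD}_{\eta}| = (q^2+1)+q^2 = 2q^2+1$; in particular the value $2q^2+1$ forces $r_{P_0}\subseteq\pi$ for a (unique) $P_0$.

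It then remains to identify the $q^2$ extra points of $\pi \cap \overline{\cD}_{\eta}$ lying off $r_{P_0}$, and here I would split along the partition and control each piece with Lemma \ref{lm:subspacesubgeo}. Since $r_{P_0}$ is fixed by every $\tau_{\alpha}$, it meets $\Sigma_{\alpha}$ in a Baer subline $b_{\alpha} = r_{P_0}\cap\Sigma_{\alpha}$, and $b_{\alpha}\subseteq \pi\cap\Sigma_{\alpha}$ always. Whenever $\pi^{\tau_{\alpha}}\ne\pi$, Lemma \ref{lm:subspacesubgeo} gives $\dim(\pi\cap\Sigma_{\alpha})\le 1$, which together with $b_{\alpha}\subseteq\pi\cap\Sigma_{\alpha}$ forces $\pi\cap\Sigma_{\alpha}=b_{\alpha}\subseteq r_{P_0}$, contributing no extra point; likewise, if $t_i\not\subseteq\pi$ then $\pi\cap t_i$ is the single point $r_{P_0}\cap t_i$, again contributing nothing. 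On the other hand, each $\alpha$ with $\pi^{\tau_{\alpha}}=\pi$ makes $\pi\cap\Sigma_{\alpha}$ a Baer subplane, contributing exactly $q^2+q+1-(q+1)=q^2$ extra points, and each $t_i\subseteq\pi$ contributes exactly $q^2$ extra points. As the pieces $t_1,t_2,\Sigma_{\alpha}$ are pairwise disjoint, these blocks of extra points are disjoint, so the number of extra points equals $q^2$ times the number of events among ``$t_1\subseteq\pi$'', ``$t_2\subseteq\pi$'', and ``$\pi^{\tau_{\alpha}}=\pi$'' that occur.

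The main obstacle is precisely this last bookkeeping: one must exclude the coexistence of two such blocks, i.e.\ a plane containing some $t_i$ while being $\tau_{\alpha}$-fixed, or fixed by two distinct $\tau_{\alpha}$. The cleanest route is to let the count do the work: since exactly $q^2$ extra points are required and each block already supplies $q^2$ disjoint ones, two blocks would force $|\pi\cap\overline{\cD}_{\eta}|\ge 3q^2+1>2q^2+1$, a contradiction. Hence exactly one event occurs, yielding either $\pi\cap\overline{\cD}_{\eta}=r_{P_0}\cup t_i$ (when some $t_i\subseteq\pi$) or $\pi\cap\overline{\cD}_{\eta}=r_{P_0}\cup(\pi\cap\Sigma_{\alpha})$ (when $\pi$ is fixed by the unique $\tau_{\alpha}$), which is exactly the claimed classification.
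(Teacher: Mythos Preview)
Your proof is correct. Part i) and the $\{q^2+1,\,2q^2+1\}$ dichotomy in ii) are argued exactly as in the paper, via the pairwise-skew structure of the lines $r_P$. For the classification of the planes with $|\pi\cap\overline{\cD}_\eta|=2q^2+1$, you and the paper take slightly different routes: the paper invokes Lemma~\ref{lemma1}~{\em i)} to count that exactly $q^2-1$ of the $q^2+1$ planes through $r_P$ meet precisely one $\Sigma_\alpha$ in a Baer subplane (leaving the two planes through $t_1,t_2$), whereas you work directly from Lemma~\ref{lm:subspacesubgeo} and the partition $\overline{\cD}_\eta = t_1 \cup t_2 \cup \bigcup_{\alpha}\Sigma_\alpha$, letting the already-established count of $q^2$ extra points force exactly one of the ``big-piece'' events to occur. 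Your argument is a touch more self-contained (it avoids Lemma~\ref{lemma1}), while the paper's is shorter once that lemma is in hand; both yield the same classification.
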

\begin{proof}
Since $\overline{\cD}_{\eta}$ is the union of the $q^2+1$ pairwise disjoint lines in $\{r_{P} \mid P \in t_1\}$, it follows that $|\overline{\cD}_{\eta}| = (q^2+1)^2$. Let $\pi$ be a plane of $\PG(3,q^2)$. If $r_P \cap \pi$ is a point, for every $P \in t_1$, then $|\pi \cap \overline{\cD}_{\eta}| = q^2+1$. Otherwise, there is a line $r_P$ contained in $\pi$. Moreover, by Lemma~\ref{lemma1}, {\em i)}, among the $q^2+1$ planes through $r_P$ there are $q^2-1$ of them intersecting precisely one subgeometry $\Sigma_{\alpha}$ in a Baer subplane. Therefore either $\pi \cap \overline{\cD}_{\eta} = r_P \cup t_i$, for some $i \in \{1, 2\}$, or $\pi \cap \overline{\cD}_{\eta} = r_P \cup \left( \pi \cap \Sigma_{\alpha} \right)$, for some $\alpha \in \Lambda$. 
\end{proof}

Consider the line 
\begin{align*}
r_{U_1} = \langle U_1, U_1^{\tau_{\alpha}} \rangle_{q^2} = \langle U_1, U_3 \rangle_{q^2}.
\end{align*}

\begin{lemma}\label{prop0} 
    Let $\cR$ be a regulus of $\cD_{\eta}$ containing $r_{U_1} \cap \Sigma_{\eta}$. If $\ell$ is a line of $\PG(3, q^2)$ having one point in common with each of the $q+1$ lines of $\cR$ extended over $\F_{q^2}$, then either $\ell \in \{t_1, t_2\}$, or $\ell \cap \Sigma_{\alpha}$ is a Baer subline, for some $\alpha \in \Lambda$, not belonging to $\cD_{\alpha}$. Moreover, $\ell$ intersects $r_{U_1}$ at exactly one point. 
\end{lemma}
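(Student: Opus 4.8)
\medskip\noindent\emph{Proof proposal.}
The plan is to recognise $\ell$ as a line of the ruling opposite to the extended regulus on a hyperbolic quadric, and then to exploit the involutions $\tau_{\alpha}$ to locate its intersections with the subgeometries. Write $\cR=\{\,r_P\cap\Sigma_{\eta}\mid P\in b\,\}$, where $b\subseteq t_1$ is the set of $q+1$ points indexing the lines of $\cR$; since $r_{U_1}\cap\Sigma_{\eta}\in\cR$ we have $U_1\in b$. As $\cR$ is a regulus of $\Sigma_{\eta}$, it lies on a hyperbolic quadric $Q$ of $\Sigma_{\eta}$, and the $q+1$ extended lines $r_P$ $(P\in b)$ lie in one ruling of the extended quadric $\overline{Q}$ of $\PG(3,q^2)$; being spread lines, they are pairwise skew. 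Because $\ell$ meets each of these $q+1\ge 3$ pairwise skew lines, it is a common transversal and hence lies in the opposite ruling, call it $R'$, meeting every $r_P$ in exactly one point; in particular $\ell\cap r_{U_1}$ is a single point, which already yields the final assertion.

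Next I would record two facts. First, a direct computation shows $r_P^{\tau_{\alpha}}=r_P$ for every $\alpha\in\Lambda$ and every $P\in t_1$: for such $P$ one has $P^{\tau_{\alpha}}=P^{\tau_{\eta}}$ and $(P^{\tau_{\eta}})^{\tau_{\alpha}}=P$, so $\tau_{\alpha}$ merely interchanges the two points $P,\,P^{\tau_{\eta}}$ spanning $r_P$. Second, each line decomposes as $r_P=\{P\}\cup\{P^{\tau_{\eta}}\}\cup\bigcup_{\alpha\in\Lambda}(r_P\cap\Sigma_{\alpha})$, the last sublines being precisely the lines of the spreads $\cD_{\alpha}$. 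Thus, writing $X_P:=\ell\cap r_P$ for $P\in b$, each $X_P$ either equals $P\in t_1$, or equals $P^{\tau_{\eta}}\in t_2$, or lies in some $\Sigma_{\alpha}$ with $\alpha\in\Lambda$.

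The heart of the argument is the following dichotomy. If every $X_P$ lies on $t_1\cup t_2$, then since there are $q+1\ge 4$ of them, at least two lie on the same $t_i$; as they are points of $\ell$, this forces $\ell=t_i\in\{t_1,t_2\}$, the first alternative. Otherwise some $X_{P_0}$ lies in a subgeometry $\Sigma_{\alpha_0}$, $\alpha_0\in\Lambda$. Applying $\tau_{\alpha_0}$, which fixes $X_{P_0}$ (a point of $\Sigma_{\alpha_0}$) and fixes each $r_P$ setwise, we see that $\ell^{\tau_{\alpha_0}}$ is again a common transversal of the lines $r_P$ $(P\in b)$, hence another line of $R'$, and it passes through $X_{P_0}=\ell\cap r_{P_0}$. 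Since distinct lines of the regulus $R'$ are mutually skew, two of them through the common point $X_{P_0}$ must coincide, whence $\ell^{\tau_{\alpha_0}}=\ell$. By Lemma~\ref{lm:subspacesubgeo}, $\ell\cap\Sigma_{\alpha_0}$ is then a Baer subline.

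It remains to check that this subline does not belong to $\cD_{\alpha_0}$, and here the decisive point is that $\ell$ lies in the ruling opposite to the $r_P$. If $\ell\cap\Sigma_{\alpha_0}=r_P\cap\Sigma_{\alpha_0}$ for some $P\in t_1$, then extending both $(q+1)$-point Baer sublines over $\F_{q^2}$ would force $\ell=r_P$; but $\ell$ meets every line $r_{P'}$ with $P'\in b$, while $r_P$ is skew to every $r_{P'}$ with $P'\neq P$, and $|b|=q+1\ge 2$ provides such a $P'\in b$ --- a contradiction. Hence $\ell\cap\Sigma_{\alpha_0}\notin\cD_{\alpha_0}$, completing the proof. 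The main obstacle I anticipate is the invariance step of the third paragraph: one must both verify the identity $r_P^{\tau_{\alpha}}=r_P$ and invoke the rigidity of $R'$ to upgrade the mere incidence $X_{P_0}\in\Sigma_{\alpha_0}$ to the invariance $\ell^{\tau_{\alpha_0}}=\ell$; ensuring that the dichotomy is exhaustive, with no ``mixed'' distribution of the $X_P$ across $t_1$ and $t_2$, also relies on the small pigeonhole remark above.
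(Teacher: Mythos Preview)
Your argument is correct and takes a genuinely different route from the paper. The paper proves the lemma by a global count: it observes that $t_1,t_2$ lie in the opposite regulus $\tilde{\cR}^o$, that for each $\alpha\in\Lambda$ the $q+1$ lines of the opposite regulus of $\{r_P\cap\Sigma_{\alpha}:P\in b\}$ in $\Sigma_{\alpha}$ extend to lines of $\tilde{\cR}^o$, and that by Lemma~\ref{lemma1}\,{\em ii)} these $(q-1)$ blocks of $q+1$ lines are pairwise disjoint; since $2+(q-1)(q+1)=q^2+1=|\tilde{\cR}^o|$, every line of $\tilde{\cR}^o$ is accounted for. By contrast, you work locally on the given $\ell$: you pick a single intersection point $X_{P_0}\in\Sigma_{\alpha_0}$, use that $\tau_{\alpha_0}$ fixes every $r_P$ setwise to force $\ell^{\tau_{\alpha_0}}\in R'$ through $X_{P_0}$, and then invoke the skewness of lines in a ruling to conclude $\ell^{\tau_{\alpha_0}}=\ell$ and hence, via Lemma~\ref{lm:subspacesubgeo}, that $\ell\cap\Sigma_{\alpha_0}$ is a Baer subline. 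Your approach is more self-contained (it does not rely on the implicit existence of the $q+1$ transversals inside each $\Sigma_{\alpha}$) and makes the role of the involutions explicit; the paper's counting is shorter once one accepts that implicit step and gives, as a by-product, the full partition of $\tilde{\cR}^o$.
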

\begin{proof}
Denote by $\tilde{\cR}$ the regulus of $\PG(3, q^2)$ such that $\left\{r \cap \Sigma_{\eta} \mid r \in \tilde{\cR}\right\}$ coincides with $\cR$ and by $\tilde{\cR}^o$ the opposite regulus of $\tilde{\cR}$. If $\ell$ is a line of $\PG(3, q^2)$ having one point in common with each of the $q+1$ lines of $\cR$ extended over $\F_{q^2}$, then $\ell$ intersects $r_{U_1}$ at exactly one point and $\ell \in \tilde{\cR}^o$. Note that $t_1, t_2 \in \tilde{\cR}^o$ and for every $\alpha \in \Lambda$, there are $q+1$ lines of $\tilde{\cR}^o$ intersecting $\Sigma_{\alpha}$ in a Baer subline not belonging to $\cD_{\alpha}$. Since $\left|\tilde{\cR}^o\right| = q^2+1$, by Lemma~\ref{lemma1}, {\em ii)}, it follows that every line of $\tilde{\cR}^o \setminus \{t_1, t_2\}$ meets $\Sigma_{\alpha}$, for some $\alpha \in \Lambda$, in a Baer subline not belonging to $\cD_{\alpha}$.    
\end{proof}

For an element $\alpha \in \cI$, we have
\begin{align*}
& r_{U_1} \cap \Sigma_{\alpha} = \left\{P_{\alpha u} \mid u \in \cU\right\}, & \mbox{ with } P_{\alpha u} = (1,0,\alpha u, 0),
\end{align*}
is a Baer subline. Similarly, each of the $q+1$ planes $\pi_{\alpha v}$ through $r_{U_1}$ 
\begin{align*}
& \pi_{\alpha v}: X_4 = \alpha v X_2, & v \in \cU,
\end{align*}
meets $\Sigma_{\alpha}$ in a Baer subplane. Let 
\begin{align*}
p\left(P_{\alpha u}, \pi_{\alpha v}\right)
\end{align*}
be the Baer subpencil formed by the $q+1$ lines through $P_{\alpha u}$ contained in $\pi_{\alpha v}$ and meeting $\pi_{\alpha v} \cap \Sigma_{\alpha}$ in a Baer subline. Note that $r_{U_1} \in p\left(P_{\alpha u}, \pi_{\alpha v}\right)$, $\forall \alpha \in \cI$, $\forall u, v \in \cU$. Set
\begin{align*}
& \cL = \left\{p\left(P_{\alpha u}, \pi_{\alpha v}\right) \setminus \left\{r_{U_1}\right\} \mid \alpha \in \cI, u, v \in \cU \right\}.
\end{align*} 
Hence $\cL$ is the set of $|\cI| q(q+1)^2$ lines of $\PG(3, q^2)$ intersecting $r_{U_1}$ in a point of $\Sigma_{\alpha}$, contained in $\pi_{\alpha v}$, for some $v \in \cU$, and meeting $\pi_{\alpha v} \cap \Sigma_{\alpha}$ in a Baer subline, where $\alpha \in \cI$.

%\textcolor{red}{Ho modificato il lemma 2.6, ma nella sua nuova versione occorre aggiungere  ad $\cI$, nel caso $q$ dispari, il valore $\alpha_k$, dove $\alpha_k^{q+1} = -1$.}
\begin{lemma}\label{lemma3}
The lines $\ell$, $\ell^{\tau_{\eta}}$ are the transversals of a Desarguesian line-spread $\cS_\ell$ of $\Sigma_{\eta}$ such that $\cD_{\eta} \cap \cS_\ell$ is a regulus through $r_{U_1} \cap \Sigma_{\eta}$ if and only if at least
one among $\ell$ and $\ell^{\tau_\eta}$ belongs to $\cL$.
\end{lemma}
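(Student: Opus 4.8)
The plan is to convert the assertion that $\cS_\ell$ shares a regulus with $\cD_\eta$ into a pure incidence statement about where the transversals $\ell,\ell^{\tau_\eta}$ sit, and then to read off membership in $\cL$ from the defining conditions of $\cL$; the set $\cI$ and the norm relation \eqref{indici} enter only in the final bookkeeping deciding which of $\ell,\ell^{\tau_\eta}$ is catalogued in $\cL$. The pivot I would establish first is: if $\cR$ is a regulus of $\cD_\eta$ through $r_{U_1}\cap\Sigma_\eta$, with $\tilde{\cR}$ its extension (as in Lemma~\ref{prop0}) and $\tilde{\cR}^o$ the opposite regulus, then $\cR\subseteq\cS_\ell$ if and only if $\ell,\ell^{\tau_\eta}\in\tilde{\cR}^o$ (recall $t_1,t_2\in\tilde{\cR}^o$). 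Forward: the transversals of $\cS_\ell$ meet every line of $\cR$, hence every line of $\tilde{\cR}$, so they lie in $\tilde{\cR}^o$. Reverse: each $m\in\tilde{\cR}$ meets $\Sigma_\eta$ in a line of $\cR$, so $m$ is $\tau_\eta$-fixed by Lemma~\ref{lm:subspacesubgeo}; if $m$ meets $\ell$ at $P$ it meets $\ell^{\tau_\eta}$ at $P^{\tau_\eta}$, whence $m=\langle P,P^{\tau_\eta}\rangle_{q^2}$ is a line of $\overline{\cS}_\ell$ and $m\cap\Sigma_\eta\in\cS_\ell$, giving $\cR\subseteq\cS_\ell$.

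For the ``only if'' direction, assume $\cD_\eta\cap\cS_\ell=\cR$ is a regulus through $r_{U_1}\cap\Sigma_\eta$. The pivot places $\ell,\ell^{\tau_\eta}\in\tilde{\cR}^o$, so Lemma~\ref{prop0} applies to each. One cannot have $\ell\in\{t_1,t_2\}$, since then $\cS_\ell=\cD_\eta$ and the intersection would not be a proper regulus; hence $\ell\cap\Sigma_\alpha$ is a Baer subline not in $\cD_\alpha$ for some $\alpha\in\Lambda$, and $\ell$ meets $r_{U_1}$ in one point. Here $\alpha\neq\eta$, else $\ell$ would be $\tau_\eta$-fixed and $\ell,\ell^{\tau_\eta}$ not skew. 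Since $\ell$ and $r_{U_1}$ are both $\tau_\alpha$-fixed, their common point is $\tau_\alpha$-fixed, hence equals some $P_{\alpha u}$, and $\langle\ell,r_{U_1}\rangle_{q^2}$ is a $\tau_\alpha$-fixed plane through $r_{U_1}$ meeting $\Sigma_\alpha$ in a Baer subplane, hence equals some $\pi_{\alpha v}$; thus $\ell\in p(P_{\alpha u},\pi_{\alpha v})\setminus\{r_{U_1}\}$. If $\alpha\in\cI$ this is exactly $\ell\in\cL$; if $\alpha\notin\cI$, then $\ell^{\tau_\eta}$ meets $\Sigma_\beta$ with $\beta^{q+1}=1/\alpha^{q+1}$ by Lemma~\ref{lemmasub}, and \eqref{indici} gives $\beta\in\cI$, so the same argument applied to $\ell^{\tau_\eta}$ yields $\ell^{\tau_\eta}\in\cL$. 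The value $\alpha^{q+1}=-1$ (odd $q$) already lies in $\cI$ and falls under the first case.

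For the ``if'' direction, assume $\ell\in\cL$ (the case $\ell^{\tau_\eta}\in\cL$ is symmetric, since $\cS_{\ell^{\tau_\eta}}=\cS_\ell$). Then $\ell$ meets $\Sigma_\alpha$, $\alpha\in\cI$, in a Baer subline not in $\cD_\alpha$, so by Lemma~\ref{lemma1}\,ii) it is disjoint from $\Sigma_\eta$; consequently $\ell,\ell^{\tau_\eta}$ are skew, as a common point would be $\tau_\eta$-fixed hence in $\Sigma_\eta$, and $\cS_\ell$ is a Desarguesian spread. To locate the shared regulus, note $\ell\cap\Sigma_\alpha$ meets $\cD_\alpha$ in a regulus of $q+1$ lines $r_Q$ (regular-spread property); each is $\tau_\alpha$-fixed because $Q^{\tau_\alpha}=Q^{\tau_\eta}$ for $Q\in t_1$, so $r_Q$ meets $\ell$ in $\Sigma_\alpha$, and each $r_Q$ is a $\tau_\eta$-fixed common transversal of $t_1,t_2,\ell$. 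Letting $\tilde{\cR}^o$ be the regulus through the pairwise skew lines $t_1,t_2,\ell$ (skewness being a short coordinate check inside $\pi_{\alpha v}$, using $\ell\neq r_{U_1}$) and $\tilde{\cR}$ its opposite, these $q+1$ fixed lines lie in $\tilde{\cR}\cap\tilde{\cR}^{\tau_\eta}$; as $q+1\geq 3$ this forces $\tilde{\cR}^{\tau_\eta}=\tilde{\cR}$, so $\ell^{\tau_\eta}\in\tilde{\cR}^o$ and, by the pivot, the $\tau_\eta$-fixed lines of $\tilde{\cR}$ form a regulus $\cR\subseteq\cD_\eta\cap\cS_\ell$ through $r_{U_1}\cap\Sigma_\eta$ (as $r_{U_1}\in\tilde{\cR}$ is fixed). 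Finally any line of $\cD_\eta\cap\cS_\ell$ extends to a common transversal of $t_1,t_2,\ell$ lying in $\overline{\cD}_\eta$, hence to one of these $q+1$ lines, so $\cD_\eta\cap\cS_\ell=\cR$ exactly.

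The main obstacle I anticipate is the $\tau_\eta$-equivariance underlying both directions: membership in $\cL$ constrains a single transversal relative to some $\Sigma_\alpha$, whereas $\cS_\ell$ requires \emph{both} $\ell$ and $\ell^{\tau_\eta}$ to lie in one opposite regulus. Bridging this—deducing $\tau_\eta$-invariance of $\tilde{\cR}$ from the $q+1$ fixed transversals, and dovetailing it with the norm bookkeeping in $\cI$ (including the self-paired case $\alpha^{q+1}=-1$) so that exactly one of $\ell,\ell^{\tau_\eta}$ is recorded in $\cL$—is where the care lies; the remaining incidences follow from Lemmas~\ref{lm:subspacesubgeo}, \ref{lemma1} and \ref{prop0}.
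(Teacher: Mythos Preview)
Your proof is correct and follows essentially the same approach as the paper: both directions hinge on Lemma~\ref{prop0} for ``only if'' and on identifying the $q+1$ lines $r_P$ meeting $\ell$ (parametrized by a Baer subline of $t_1$) for ``if'', with the $\cI$-bookkeeping via \eqref{indici} at the end. Your version is more explicit in a few places the paper leaves implicit---the ``pivot'' equivalence, the verification that $\ell\cap r_{U_1}\in\Sigma_\alpha$ and $\ell\subset\pi_{\alpha v}$, the exclusion $\alpha\neq\eta$, and the check that $\cD_\eta\cap\cS_\ell$ is exactly (not merely contains) a regulus---and your use of the regular-spread property of $\cD_\alpha$ to locate the regulus is a slight repackaging of the paper's bijection $P\mapsto r_P\cap\pi_{\alpha v}$, but the underlying mechanism is the same.
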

\begin{proof}
Assume that $\ell \in \cL$. By Lemma~\ref{lemma1}, {\em ii)}, $|\ell \cap \Sigma_{\eta}| = 0$. Therefore, we easily infer that $|\ell^{\tau_{\eta}} \cap \Sigma_{\eta}| = 0$, being $\tau_{\eta}$ an involution such that $Fix(\tau_{\eta}) = \Sigma_{\eta}$. This shows that $\cS_\ell$ is a Desarguesian line-spread  of $\Sigma_{\eta}$. The line $r_{P}$, with $P \in t_1$, $P \ne U_1$, meets $\pi_{\alpha v}$ at a point of $\Sigma_{\alpha} \setminus r_{U_1}$ and through a point of $\left(\pi_{\alpha v} \cap \Sigma_{\alpha} \right) \setminus r_{U_1}$ there passes exactly one of the lines in $\left\{r_{P} \mid P \in t_1, P \ne U_1\right\}$. Since the $q$ points $P \in t_1$ such that $r_{P} \cap \pi_{\alpha v}$ belong to $(\ell \setminus r_{U_1}) \cap \Sigma_{\alpha}$, together with $U_1$, form a Baer subline of $t_1$, it follows that $\cD_{\eta} \cap \cS_\ell$ is a regulus through $r_{U_1} \cap \Sigma_{\eta}$. Similarly if $\ell^{\tau_{\eta}} \in \cL$.

Viceversa, let  $\cS_{\ell}$ be a Desarguesian line-spread of $\Sigma_{\eta}$ such that $\cD_{\eta} \cap \cS_{\ell}$ is a regulus, denoted by $\cR_{\ell}$, through the line $r_{U_1} \cap \Sigma_{\eta}$. Since $r_{U_1} \in \cR_{\ell}$, by Lemma~\ref{prop0}, we infer that $\ell$ meets $r_{U_1}$ at exactly one point and $\ell \cap \Sigma_{\alpha}$ is a Baer subline, for some $\alpha \in \Lambda$, not belonging to $\cD_{\alpha}$. Finally, we see that at least one among $\ell$ and $\ell^{\tau_\eta}$ belongs to $\cL$. Indeed, let $\alpha \in \Lambda$ with $\alpha^{q+1} \ne -1$. If $\alpha \in \cI$, then $\ell \in \cL$, otherwise $\ell^{\tau_{\eta}} \in \cL$, by \eqref{indici}. In the case when $\alpha^{q+1} = -1$, then $\ell, \ell^{\tau_\eta} \in \cL$, since $\alpha \in \cI$ and $\Sigma_\alpha^{\tau_{\eta}} = \Sigma_{\alpha}$. 
\end{proof}

\begin{remark}\label{rem1}
For a line $\ell \in \cL$, denote by $\cR_\ell$ the regulus $\cD_{\eta} \cap \cS_\ell$ and by $\cR_\ell^{o}$ its opposite regulus. Clearly, $\left(\cS_\ell \setminus \cR_{\ell}\right) \cup \cR_{\ell}^{o}$ is a line-spread of $\Sigma_{\eta}$ known as {\em subregular spread of index one} or {\em Hall spread}, see \cite[pag. 54]{hirschfeld1985finite}, \cite[Section 8]{BruckBose}.
\end{remark}

Next, we focus on some particular lines of $\cL$, namely the $q$ lines meeting $r_{U_1}$ in $P_{\alpha}$ and contained in $\pi_{\alpha}$, where $\alpha \in \cI$. In order to do so, fix $\alpha \in \cI$, $\tilde{x} \in \F_{q^2} \setminus \F_q$ and set 
\begin{align*}
& l_\lambda = \langle P_{\alpha} = (1,0, \alpha, 0), (\lambda \tilde{x}, 1, \alpha \lambda \tilde{x}^q, \alpha) \rangle_{q^2}, & \lambda \in \F_q.
\end{align*}
Note that $l_\lambda \in \cL$, indeed, 
\begin{align*}
& l_\lambda \cap r_{U_1} = P_{\alpha}, \\
& l_\lambda \subset \pi_{\alpha}, \\ 
& l_\lambda \cap \Sigma_{\alpha} \mbox{ is a Baer subline, } & \lambda \in \F_q.
\end{align*}
 Let $\cS_{l_\lambda}$ be the Desarguesian line-spread of $\Sigma_{\eta}$ with transversal lines $l_\lambda$ and $l_\lambda^{\tau_{\eta}}$, where
\begin{align*}
& l_\lambda^{\tau_{\eta}} = \langle (\alpha^q, 0, 1, 0), (\alpha^q \lambda \tilde{x}, \alpha^q, \lambda \tilde{x}^q, 1) \rangle_{q^2}, & \lambda \in \F_q. 
\end{align*}
%\textcolor{red}{Da qui riformulei come in blue}
Now, consider the projectivities $\varphi$ and $\xi_{\lambda}$ represented by the matrices
\begin{align*}
& \begin{pmatrix}
1 & 0 & \alpha^q & 0 \\
0 & 1 & 0 & \alpha^q \\
\alpha & 0 & 1 & 0 \\
0 & \alpha & 0 & 1
\end{pmatrix}, \\
& \begin{pmatrix}
1 & \lambda \tilde{x} & 0 & 0 \\
0 & 1 & 0 & 0 \\
0 & 0 & 1 & \lambda \tilde{x}^q \\
0 & 0 & 0 & 1
\end{pmatrix}, & \lambda \in \F_q,
\end{align*}
respectively, and set 
\begin{align*}
& \phi_{\lambda} = \varphi \circ \xi_{\lambda}, & \lambda \in \F_q.
\end{align*}
It is easily seen that $\varphi$ fixes $\Sigma_{\eta}$ and $r_{U_1}$. Moreover, 
\begin{align*}
 l_0 & = t_1^\varphi, \\
 l_0^{\tau_{\eta}} & = t_2^\varphi. 
\end{align*}
On the other hand, $\xi_{\lambda}$ fixes $\overline{\cD}_{\eta}$. In particular, it stabilizes $t_1$, $t_2$, and each of the subgeometries $\Sigma_{\alpha}$, $\alpha \in \Lambda$. More precisely, $\xi_{\lambda}$ fixes the line $r_{U_1}$ pointwise and leaves invariant the plane $\pi_{\alpha v}$, for every $v \in \cU$. Furthermore, $\xi_{\lambda}$ maps $\cS_{l_0}$ to the Desarguesian line-spread $\cS_{l_\lambda}$ of $\Sigma_{\eta}$, since 
\begin{align*}
 l_0^{\xi_{\lambda}} & = l_{\lambda}, \\
 \left(l_0^{\tau_{\eta}}\right)^{\xi_{\lambda}} & = l_\lambda^{\tau_{\eta}}. 
\end{align*}
It follows that 
%Now, consider the projectivity $\phi_{\alpha_i,\lambda}$ represented by the matrix 
%\begin{align*}
%& \begin{pmatrix}
%1 & \lambda \tilde{x} & \alpha_i^q & \alpha_i^q \lambda \tilde{x} \\
%0 & 1 & 0 & \alpha_i^q \\
%\alpha_i & \alpha_i\lambda \tilde{x}^q & 1 & \lambda \tilde{x}^q \\
%0 & \alpha_i & 0 & 1
%\end{pmatrix}, &  \lambda \in \F_q.
%\end{align*}
%It is easily seen that $\phi_{\alpha_i,\lambda}$ fixes $\Sigma_{\eta}$. Moreover, 
\begin{align*}
 & t_1^{\phi_{\lambda}} = t_1^{\varphi \circ \xi_{\lambda}} = \left(t_1^\varphi\right)^{\xi_{\lambda}} = l_0^{\xi_{\lambda}} = l_\lambda, \\
& t_2^{\phi_{\lambda}}  = t_2^{\varphi \circ \xi_{\lambda}} = \left(t_2^\varphi\right)^{\xi_{\lambda}} = \left(l_0^{\tau_{\eta}}\right)^{\xi_{\lambda}} = l_\lambda^{\tau_{\eta}}. 
\end{align*}
Hence the image of $\cD_{\eta}$ under $\phi_{\lambda}$ is the Desarguesian line-spread $\cS_{l_{\lambda}}$ of $\Sigma_{\eta}$. Note that 
\begin{equation} \label{eq:imageSlambda}
\overline{\cS}_{l_{\lambda}} = \bigcup_{\rho \in \Lambda} \Sigma_{\rho}^{\phi_{\lambda}} \cup l_{\lambda} \cup l_{\lambda}^{\tau_{\eta}},
\end{equation}
where $\Sigma_{\rho}^{\phi_{\lambda}}$, $\rho \in \Lambda$, are pairwise disjoint Baer subgeometries, each isomorphic to $\PG(3, q)$. 

\begin{prop} \label{prop:interesctionSllambdapi}
Let $\beta \in \cI$ and $v \in \cU$, the intersection $\overline{\cS}_{l_\lambda}  \cap \pi_{\beta v}$ consists of $2q^2+1$ points. In particular, 
\begin{align*}
\overline{\cS}_{l_\lambda}  \cap \pi_{\beta v} =
\begin{cases}
r_{U_1} \cup l_\lambda, & \mbox{ if } (\beta, v) = (\alpha, 1), \\
r_{U_1} \cup l_\lambda^{\tau_{\eta}}, & \mbox{ if } q \mbox{ is odd, } \alpha^{q+1} = -1 \mbox{ and } (\beta, v) = (\alpha, -1), \\
r_{U_1} \cup \sigma_{\beta v, \lambda}, \mbox{ with } \sigma_{\beta v, \lambda} \mbox{ a Baer subplane, }  & \mbox{ otherwise. }%\mbox{ if } (\beta, v) \ne (\alpha, 1).
\end{cases}
\end{align*}
\end{prop}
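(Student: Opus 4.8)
The plan is to transport the computation to $\overline{\cD}_{\eta}$ through the projectivity $\phi_{\lambda}$ and then invoke Lemma~\ref{lemma2}. By \eqref{eq:imageSlambda} together with the description $\overline{\cD}_{\eta} = \bigcup_{\rho \in \Lambda}\Sigma_{\rho} \cup t_1 \cup t_2$ and the identities $t_1^{\phi_{\lambda}} = l_{\lambda}$, $t_2^{\phi_{\lambda}} = l_{\lambda}^{\tau_{\eta}}$, one has $\overline{\cD}_{\eta}^{\phi_{\lambda}} = \overline{\cS}_{l_{\lambda}}$. Setting $\pi' = \pi_{\beta v}^{\phi_{\lambda}^{-1}}$, this gives $\overline{\cS}_{l_{\lambda}} \cap \pi_{\beta v} = \left(\overline{\cD}_{\eta} \cap \pi'\right)^{\phi_{\lambda}}$, so the whole statement reduces to understanding how the plane $\pi'$ meets $\overline{\cD}_{\eta}$.

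First I would note that $\phi_{\lambda}$ fixes $r_{U_1}$: indeed $\varphi$ fixes $r_{U_1}$ and $\xi_{\lambda}$ fixes it pointwise, hence so does $\phi_{\lambda} = \varphi \circ \xi_{\lambda}$. Since $r_{U_1} \subset \pi_{\beta v}$ by construction, we get $r_{U_1} = r_{U_1}^{\phi_{\lambda}^{-1}} \subset \pi'$. As $r_{U_1}$ is the line $r_P$ with $P = U_1 \in t_1$, Lemma~\ref{lemma2}, {\em ii)}, applies to $\pi'$ and yields $|\pi' \cap \overline{\cD}_{\eta}| = 2q^2+1$, whence $|\overline{\cS}_{l_{\lambda}} \cap \pi_{\beta v}| = 2q^2+1$ for every choice of $\beta \in \cI$, $v \in \cU$. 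The same lemma gives the dichotomy $\pi' \cap \overline{\cD}_{\eta} = r_{U_1} \cup t_i$, when $t_i \subset \pi'$, or $\pi' \cap \overline{\cD}_{\eta} = r_{U_1} \cup (\pi' \cap \Sigma_{\rho})$ with $\pi' \cap \Sigma_{\rho}$ a Baer subplane. Applying $\phi_{\lambda}$ and using $r_{U_1}^{\phi_{\lambda}} = r_{U_1}$, $t_1^{\phi_{\lambda}} = l_{\lambda}$, $t_2^{\phi_{\lambda}} = l_{\lambda}^{\tau_{\eta}}$, together with the fact that $\phi_{\lambda}$ sends Baer subplanes to Baer subplanes, produces exactly the three announced possibilities $r_{U_1} \cup l_{\lambda}$, $r_{U_1} \cup l_{\lambda}^{\tau_{\eta}}$, and $r_{U_1} \cup \sigma_{\beta v, \lambda}$.

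It then remains to decide, for each $(\beta, v)$, which alternative occurs, i.e. whether $t_1 \subset \pi'$ or $t_2 \subset \pi'$; equivalently, whether $l_{\lambda} \subset \pi_{\beta v}$ or $l_{\lambda}^{\tau_{\eta}} \subset \pi_{\beta v}$. Substituting the two generators of $l_{\lambda}$ and of $l_{\lambda}^{\tau_{\eta}}$ into the equation $X_4 = \beta v X_2$ of $\pi_{\beta v}$ reduces these containments to the scalar conditions $\beta v = \alpha$ and $\beta v \alpha^{q} = 1$, respectively. Taking norms and using $v^{q+1} = 1$ and $\alpha^{q+1} \in \F_q$, these become $\beta^{q+1} = \alpha^{q+1}$ and $\beta^{q+1} = (\alpha^{q+1})^{-1}$. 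Because distinct elements of $\Lambda$ have distinct norm, the first equality forces $\beta = \alpha$ and then $v = 1$, giving the case $(\beta, v) = (\alpha, 1)$. For the second, the combinatorial structure of $\cI$ --- recall that the set $A = \{a_1, \dots, a_t\}$ of Lemma~\ref{lm:partitionnorm} satisfies $A \cap A^{-1} = \emptyset$ and $-1 \notin A$ --- shows that $(\alpha^{q+1})^{-1}$ can lie in the norm set defining $\cI$ only when $\alpha^{q+1} = -1$, which is possible solely for $q$ odd and then forces $\beta = \alpha$ and $v = -1 \in \cU$. In every other case neither $t_1$ nor $t_2$ lies in $\pi'$, so the Baer-subplane alternative holds. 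I expect this last norm-bookkeeping step, linking the containment conditions to the defining constraints on $\cI$, to be the main obstacle, whereas the transfer through $\phi_{\lambda}$ and the appeal to Lemma~\ref{lemma2} are routine.
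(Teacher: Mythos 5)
Your proof is correct and follows essentially the same route as the paper: transport to $\overline{\cD}_{\eta}$ via $\phi_{\lambda}$, apply Lemma~\ref{lemma2}~\emph{ii)} to the plane through $r_{U_1}$, and then decide the trichotomy by testing whether $l_{\lambda}$ or $l_{\lambda}^{\tau_{\eta}}$ lies in $\pi_{\beta v}$. The only difference is that you spell out the norm computation (reducing the containments to $\beta^{q+1}=\alpha^{q+1}$ and $\beta^{q+1}=(\alpha^{q+1})^{-1}$ and invoking $A\cap A^{-1}=\emptyset$) where the paper simply says this is ``easy to check''.
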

\begin{proof}
First, note that $\left(\overline{\mathcal{D}}_\eta\right)^{\phi_{\lambda}} = \overline{\cS}_{l_\lambda}$ and that $\phi_{\lambda}$ fixes $r_{U_1}$, and hence $r_{U_1} \subseteq \overline{\cS}_{l_\lambda} \cap \pi_{\beta v}$. By using {\em ii)} of Lemma \ref{lemma2}, we have that $\lvert \overline{\cS}_{l_\lambda} \cap \pi_{\beta v} \rvert = 2q^2 + 1$. Moreover,
if $(\beta, v) = (\alpha, 1)$, we get that the point $(\lambda \tilde{x}, 1, \alpha \lambda \tilde{x}^q, \alpha)$ belongs to $\pi_{\beta v}$, proving that $l_\lambda$ is contained in  $\pi_{\beta v}$ and hence $\overline{\cS}_{l_\lambda} \cap \pi_{\beta v} = r_{U_1} \cup l_\lambda$. Similarly, if $q$ is odd, $\alpha^{q+1} = -1$ and $(\beta, v) = (\alpha, -1)$, we have that $\overline{\cS}_{l_\lambda} \cap \pi_{\beta v} = r_{U_1} \cup l_\lambda^{\tau_\eta}$.  Otherwise, %if $(\beta, v) \neq (\alpha, 1)$, 
it is easy to check that neither $l_\lambda$ nor $l_\lambda^{\tau_{\eta}}$ is contained in $\pi_{\beta v}$, which, again by using {\em ii)} of Lemma~\ref{lemma2}, proves that
$\overline{\cS}_{l_\lambda} \cap \pi_{\beta v} = r_{U_1} \cup \sigma_{\beta v, \lambda}$,
with $\sigma_{\beta v, \lambda}$ a Baer subplane.
\end{proof}

\begin{prop}\label{prop1}
Let $\lambda \in \F_q$, $\beta \in \cI$ and $v \in \cU$, with $v \ne 1$. The Baer subplanes $\sigma_{\beta v, \lambda}$ and $\Sigma_{\beta} \cap \pi_{\beta v}$ share a configuration consisting of the $q+2$ points given by $P_{\frac{\beta^q \alpha}{\alpha^q} v^q}$ and a Baer subline $s_{\beta v, \lambda}$. 
\end{prop}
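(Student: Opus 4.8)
The plan is to realise both Baer subplanes inside the plane $\pi_{\beta v}\cong\PG(2,q^2)$ explicitly and to compute their intersection by separating the points lying on $r_{U_1}$ from those off $r_{U_1}$. For $\Sigma_\beta\cap\pi_{\beta v}$, fix $y_0\in\F_{q^2}$ with $y_0^{q-1}=v$ (so $y_0^q=vy_0$); since $\beta\in\cI$ one checks that
\[
\Sigma_\beta\cap\pi_{\beta v}=\{(x,y_0,\beta x^q,\beta y_0^q)\mid x\in\F_{q^2}\}\cup\{P_{\beta u}\mid u\in\cU\},
\]
where the second set is the Baer subline $r_{U_1}\cap\Sigma_\beta$. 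The main tool I would use for $\sigma_{\beta v,\lambda}$ is the clean description coming from $\overline{\cD}_\eta=\bigcup_{P\in t_1}r_P$, namely that $\overline{\cD}_\eta=\{(p_1,p_2,p_3,p_4)\mid p_1^q p_4=p_2^q p_3\}$. Since $\overline{\cS}_{l_\lambda}=(\overline{\cD}_\eta)^{\phi_\lambda}$ and $\phi_\lambda=\varphi\circ\xi_\lambda$ has an explicit matrix $M$, a point $Q\in\pi_{\beta v}$ off $r_{U_1}$ lies in $\sigma_{\beta v,\lambda}$ if and only if $M^{-1}Q$ satisfies that quadratic relation. Substituting $Q=(x,y_0,\beta x^q,\beta y_0^q)$ and repeatedly using $y_0^q=vy_0$, $v^q=v^{-1}$, $\lambda^q=\lambda$, I expect the condition to collapse, after cancelling the nonzero factor $\alpha(\beta^{q+1}-1)$ (nonzero as $\beta\notin\cU$), to the single equation $x^q-vx=\lambda y_0 v(\tilde{x}^q-\tilde{x})$; here we are in the generic case of Proposition~\ref{prop:interesctionSllambdapi}, so that $\sigma_{\beta v,\lambda}$ is genuinely a Baer subplane.

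I would then analyse this as an $\F_q$-linear affine equation in $x$. The map $x\mapsto x^q-vx$ is $\F_q$-linear with kernel $\F_q y_0$ and image $\F_q(1-v)$ (this is where $v\ne1$, hence $1-v\ne0$, enters). A short Frobenius computation shows the right-hand side lies in this image, so there are exactly $q$ solutions, a coset $x_0+\F_q y_0$. Writing $x=x_0+ty_0$, $t\in\F_q$, exhibits these $q$ points as $A+tB$ with $B=P_{\beta v}\in r_{U_1}$; hence they are precisely the affine part of a Baer subline $s_{\beta v,\lambda}$ whose point on $r_{U_1}$ is $P_{\beta v}$. This accounts for $q$ intersection points off $r_{U_1}$, together with $P_{\beta v}$.

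It remains to determine the intersection on $r_{U_1}$. Since $\phi_\lambda$ stabilises $r_{U_1}$, the Baer subline $r_{U_1}\cap\sigma_{\beta v,\lambda}$ equals $(r_{U_1}\cap\Sigma_\rho)^{\phi_\lambda}$, where $\rho=\tfrac{\beta v-\alpha}{v-\alpha\beta^q}$ is the parameter for which $M^{-1}Q\in\Sigma_\rho$, read off from the computation above; applying $M$ to $(1,0,\rho u',0)$ gives $r_{U_1}\cap\sigma_{\beta v,\lambda}=\{P_{w(u')}\mid u'\in\cU\}$ with $w(u')=\tfrac{\alpha+\rho u'}{1+\alpha^q\rho u'}$. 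Imposing $w(u')^{q+1}=\beta^{q+1}$ yields a quadratic in $u'$ with exactly two roots in $\cU$: one giving $P_{\beta v}$ (the point of $s_{\beta v,\lambda}$ on $r_{U_1}$) and the other giving $P_{\frac{\beta^q\alpha}{\alpha^q}v^q}$, and I would confirm $u'^{q+1}=1$ for both directly. As $v\ne1$ forces these points to be distinct, the whole intersection is the Baer subline $s_{\beta v,\lambda}$ together with the extra point $P_{\frac{\beta^q\alpha}{\alpha^q}v^q}$, i.e. $q+2$ points, as asserted.

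The main obstacle is the bookkeeping attached to $\phi_\lambda$: computing $M^{-1}$ and tracking how the relation defining $\overline{\cD}_\eta$ transforms, combined with the disciplined use of $y_0^q=vy_0$, $v^q=v^{-1}$ and $\lambda^q=\lambda$ that is needed to reduce everything to the linear equation $x^q-vx=\lambda y_0 v(\tilde{x}^q-\tilde{x})$. Conceptually the crux is that this $\F_q$-linearity is exactly what forces the off-$r_{U_1}$ part of the intersection to be the affine part of a single Baer subline, while the two subplanes meet $r_{U_1}$ in two Baer sublines tangent at $P_{\frac{\beta^q\alpha}{\alpha^q}v^q}$; the only genuinely delicate points are the solvability check (Frobenius-invariance of the ratio of the right-hand side to $1-v$) and the distinctness $P_{\frac{\beta^q\alpha}{\alpha^q}v^q}\ne P_{\beta v}$.
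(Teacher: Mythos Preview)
Your approach is correct and reaches the stated result, but it is organised quite differently from the paper's proof. The paper proceeds in two steps: it first treats $\lambda=0$ by explicitly parametrising $\sigma_{\beta v,0}$ as $\Sigma_\rho^{\varphi}\cap\pi_{\beta v}$ for $\rho=\frac{\beta v-\alpha}{\beta\alpha^q v-1}$, and then determines which of the points $R_{x,y}$ of that parametrisation are fixed by $\tau_\beta$, obtaining a factorisation
\[
(x+x^q)\bigl(\alpha(\beta\alpha^q v-1)(\beta^q v^q-\alpha^q)x+\alpha^q(\beta^q\alpha v^q-1)(\beta v-\alpha)x^q\bigr)=0
\]
whose first factor yields $s_{\beta v,0}$ and whose second yields the extra point; only afterwards does it transport everything to general $\lambda$ via $\xi_\lambda$, using that $\xi_\lambda$ fixes $\Sigma_\beta\cap\pi_{\beta v}$ and sends $\sigma_{\beta v,0}$ to $\sigma_{\beta v,\lambda}$. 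You reverse the roles: you parametrise $\Sigma_\beta\cap\pi_{\beta v}$ and test membership in $\overline{\cS}_{l_\lambda}$ for general $\lambda$ directly, via the implicit description $p_1^qp_4=p_2^qp_3$ of $\overline{\cD}_\eta$, and then separate the on-$r_{U_1}$ and off-$r_{U_1}$ contributions. Your off-$r_{U_1}$ equation $x^q-vx=\lambda y_0v(\tilde x^q-\tilde x)$ is exactly what one obtains after cancelling the common factor $\alpha(\beta^{q+1}-1)\ne0$, and the $\F_q$-linear analysis of it is a clean way to see the affine part of the Baer subline. The trade-off: the paper's two-step route makes the relation $s_{\beta v,\lambda}=s_{\beta v,0}^{\xi_\lambda}$ explicit and avoids inverting $\phi_\lambda$; your one-step route handles all $\lambda$ at once and gives the $\lambda$-dependence of $s_{\beta v,\lambda}$ immediately.

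Two small caveats. First, calling the two Baer sublines on $r_{U_1}$ ``tangent'' at $P_{\frac{\beta^q\alpha}{\alpha^q}v^q}$ is the wrong word: they share the \emph{two} points $P_{\beta v}$ and $P_{\frac{\beta^q\alpha}{\alpha^q}v^q}$. Second, your assertion that $v\ne1$ alone forces $P_{\frac{\beta^q\alpha}{\alpha^q}v^q}\ne P_{\beta v}$ is not correct: these coincide precisely when $v^2=(\beta/\alpha)^{q-1}$, which can occur for $v\ne1$ (for instance $\beta=\alpha$, $q$ odd, $v=-1$). The paper's proof likewise does not address this distinctness explicitly, so this is not a gap peculiar to your route.
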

\begin{proof}
First consider $\lambda=0$. Hence $\phi_0 = \varphi$. Note that, by \eqref{eq:imageSlambda}, we have 
\begin{align*}
\overline{\cS}_{l_0} = \bigcup_{\rho \in \Lambda} \Sigma_{\rho}^{\phi_{0}} \cup l_0 \cup l_0^{\tau_{\eta}},
\end{align*}
where $\Sigma_{\rho}^{\phi_{0}}$, $\rho \in \Lambda$, are Baer subgeometries, each isomorphic to $\PG(3, q)$, whose points have homogeneous projective coordinates 
\begin{align*}
& \left(x + \alpha^q \rho x^q, y + \alpha^q \rho y^q, \alpha x + \rho x^q, \alpha y + \rho y^q \right), & x, y \in \F_{q^2} \mbox{ and } (x, y) \ne (0, 0). 
\end{align*}
%\textcolor{red}{Da qui è uguale a quello che hai scritto tu.}
Assume that $\beta \in \cI$ and $v \in \cU$, with $v \ne 1$. Exactly one of these $q-1$ subgeometries meets $\pi_{\beta v}$ in the Baer subplane $\sigma_{\beta v, 0}$. Some calculations show that $\Sigma_{\rho}^{\phi_0} \cap \pi_{\beta v}$ is a Baer subplane if and only if $\rho = \frac{\beta v -\alpha}{\beta \alpha^q v - 1}$. Moreover, 
\begin{align*}
& \sigma_{\beta v, 0} = \left\{ R_{x,y} \mid x, y \in \F_{q^2}, y+ y^q = 0, (x,y) \ne (0,0) \right\}, \mbox{ where } \\
& R_{x,y} = \left(x + \alpha^q \frac{\beta v -\alpha}{\beta \alpha^q v - 1} x^q, y + \alpha^q \frac{\beta v -\alpha}{\beta \alpha^q v - 1} y^q, \alpha x + \frac{\beta v -\alpha}{\beta \alpha^q v - 1} x^q, \alpha y + \frac{\beta v -\alpha}{\beta \alpha^q v - 1} y^q \right),
%\left( \left(1-\alpha_2^{q+1} v\right) x + \alpha_2^{q+1}\left(1-v\right) x^q, y \left(1-\alpha_2^{q+1}\right), \alpha_2 \left(1-\alpha_2^{q+1} v\right) x + \alpha_2\left(1-v\right) x^q, \alpha_2 y \left(1-\alpha_2^{q+1}\right) v \right).
\end{align*}
A point $R_{x,y}$ of $\sigma_{\beta v, 0}$ belongs to the Baer subplane $\Sigma_{\beta} \cap \pi_{\beta v}$ if and only if $R_{x,y}^{\tau_{\beta}} = R_{x,y}$, which in turn implies 
\begin{align*}
%\left( \left(1-\alpha_2^{q+1} v\right) x + \alpha_2^{q+1}(1-v) x^q \right)^{q+1} - \left( \left(1-\alpha_2^{q+1} v\right) x + (1-v) x^q \right)^{q+1} & = 0, 
\beta^{q+1} \left( \left(\beta \alpha^q v - 1\right) x + \alpha^q\left(\beta v - \alpha\right) x^q \right)^{q+1} - \left( \alpha\left(\beta \alpha^q v - 1\right) x + \left(\beta v -\alpha\right) x^q \right)^{q+1} & = 0,
\end{align*}
that is 
\begin{align*}
%\left( x+x^q \right) \left( \left(1-\alpha_2^{q+1} v\right) x - \left(v - \alpha_2^{q+1}\right) x^q  \right) & = 0.
\left( x+x^q \right) \left( \alpha \left(\beta \alpha^q v - 1\right) \left( \beta^q v^q - \alpha^q \right) x + \alpha^q \left(\beta^q \alpha v^q - 1\right) \left(\beta v - \alpha\right) x^q  \right) & = 0.
\end{align*}
If $x + x^q = 0$, we obtain the $q+1$ points of $\sigma_{\beta v, 0} \cap \Sigma_{\beta} \cap \pi_{\beta v}$ given by the Baer subline
\begin{align*}
s_{\beta v, 0} = \left\{ (x,y, \beta v x, \beta v y) = x P_{\beta v} + y (0,1,0,\beta v), x, y \in \F_{q^2}, x+x^q = y + y^q = 0, (x, y) \ne (0,0) \right\}.
\end{align*} 
If $x^q = - \frac{\alpha \left(\beta \alpha^q v - 1\right) \left( \beta^q v^q - \alpha^q \right) x}{\alpha^q \left(\beta^q \alpha v^q - 1\right) \left(\beta v - \alpha\right)}
$ and $R_{x,y} \in \sigma_{\beta v, 0} \cap \Sigma_{\beta} \cap \pi_{\beta v}$, then $y = 0$. Hence, in this case, the unique point of $\sigma_{\beta v, 0} \cap \Sigma_{\beta} \cap \pi_{\beta v}$ is
\begin{align*}
P_{\frac{\beta^q \alpha}{\alpha^q} v^q} = \left(1,0,\frac{\beta^q \alpha}{\alpha^q} v^q, 0 \right).
\end{align*}
We have just seen that the two Baer subplanes $\sigma_{\beta v, 0}$ and $\Sigma_{\beta} \cap \pi_{\beta v}$ share precisely the configuration formed by the $q+2$ points given by $P_{\frac{\beta^q \alpha}{\alpha^q} v^q}$ and the Baer subline $s_{\beta v, 0}$.

%Let $\xi_{\lambda}$ denote the projectivity represented by the matrix
%\begin{align*}
%& \begin{pmatrix}
%1 & \lambda \tilde{x} & 0 & 0 \\
%0 & 1 & 0 & 0 \\
%0 & 0 & 1 & \lambda \tilde{x}^q \\
%0 & 0 & 0 & 1
%\end{pmatrix}, & \lambda \in \F_q. 
%\end{align*}
%Then $\xi_{\lambda}$ fixes $\overline{\cD}_1$. In particular, it stabilizes $t_1$, $t_2$, and 
Recall that $\xi_{\lambda}$ fixes each of the subgeometries $\Sigma_{\rho}$, $\rho \in \Lambda$. The line $r_{U_1}$ is fixed pointwise by $\xi_{\lambda}$ and the plane $\pi_{\beta v}$ is left invariant by $\xi_{\lambda}$. Furthermore, $\xi_{\lambda}$ maps $\cS_{l_0}$ to the Desarguesian line-spread $\cS_{l_\lambda}$ of $\Sigma_{\eta}$, since 
\begin{align*}
 l_0^{\xi_{\lambda}} & = l_{\lambda}, \\
 \left(l_0^{\tau_{\eta}}\right)^{\xi_{\lambda}} & = l_\lambda^{\tau_{\eta}}. 
\end{align*}
It follows that 
\begin{align*}
  \left( \Sigma_{\beta} \cap \pi_{\beta v} \right)^{\xi_{\lambda}} =  \Sigma_{\beta}^{\xi_{\lambda}} \cap \pi_{\beta v}^{\xi_{\lambda}} & = \Sigma_{\beta} \cap \pi_{\beta v}, \\
  \overline{\cS}_{l_0}^{\xi_{\lambda}} = \overline{\cS}_{l_\lambda} \implies \sigma_{\beta v, 0}^{\xi_{\lambda}} & = \sigma_{\beta v, \lambda}.
\end{align*}
Therefore we may conclude that the intersection of the two Baer subplanes $\sigma_{\beta v, \lambda}$ and $\Sigma_{\beta} \cap \pi_{\beta v}$ consists of the $q+2$ points given by $P_{\frac{\beta^q \alpha}{\alpha^q} v^q}$ and the Baer subline $s_{\beta v, \lambda}$, where $s_{\beta v, \lambda} = s_{\beta v, 0}^{\xi_{\lambda}}$.
\end{proof}

\begin{prop}\label{prop2}
Let $\lambda \in \F_q$, $\beta \in \cI$ and $v \in \cU$. If $\ell \subset \pi_{\beta v}$ is a line of $\cL \setminus \{l_\lambda\}$ intersecting non trivially $\overline{\cS}_{l_\lambda} \setminus \overline{\cR}_{l_\lambda}$, then $\ell$ meets $r_{U_1}$ in the point $P_{\frac{\beta^q \alpha}{\alpha^q} v^q}$. 
\end{prop}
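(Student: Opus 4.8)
The plan is to restrict everything to the plane $\pi_{\beta v}\cong\PG(2,q^2)$ and to recast the claim as a statement about two Baer subplanes of that plane. First I would record the shape of $\ell$: since $\ell\in\cL$ lies in $\pi_{\beta v}$, and the only plane through $r_{U_1}$ containing $\ell$ is $\pi_{\beta v}$, which meets just one subgeometry (namely $\Sigma_\beta$, as the $\Sigma_\gamma$ have pairwise distinct norm) in a Baer subplane, the index of $\ell$ must be $\beta$. Hence $\ell\neq r_{U_1}$, the line $\ell$ meets $r_{U_1}$ in a single point $P_{\beta u}$ with $u\in\cU$, and $\ell$ is secant to the Baer subplane $B_1:=\Sigma_\beta\cap\pi_{\beta v}$. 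By Proposition~\ref{prop:interesctionSllambdapi}, outside the two exceptional pairs we have $\overline{\cS}_{l_\lambda}\cap\pi_{\beta v}=r_{U_1}\cup B_2$ with $B_2:=\sigma_{\beta v,\lambda}$ a Baer subplane, and by Proposition~\ref{prop1} the two subplanes meet in $B_1\cap B_2=\{Q\}\cup s$, where $Q:=P_{\frac{\beta^q\alpha}{\alpha^q}v^q}$ and $s:=s_{\beta v,\lambda}$.

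Next I would compute the trace of the regulus. As $\cR_{l_\lambda}=\cD_\eta\cap\cS_{l_\lambda}$ is a regulus through $r_{U_1}\cap\Sigma_\eta$, it consists of $q+1$ lines $r_P$ with $P$ ranging over a Baer subline of $t_1$ through $U_1$; the line $r_{U_1}$ lies in $\pi_{\beta v}$, while each of the remaining $q$ lines meets $\pi_{\beta v}$ in one point of $(\Sigma_\beta\cap\pi_{\beta v})\setminus r_{U_1}$. Since these $q$ points also lie in $\overline{\cS}_{l_\lambda}\cap\pi_{\beta v}=r_{U_1}\cup B_2$, they lie in $(B_2\cap\Sigma_\beta)\setminus r_{U_1}=s\setminus\{P_{\beta v}\}$, a set of exactly $q$ points, and hence coincide with it. Therefore $\overline{\cR}_{l_\lambda}\cap\pi_{\beta v}=r_{U_1}\cup s$, and
\[
(\overline{\cS}_{l_\lambda}\setminus\overline{\cR}_{l_\lambda})\cap\pi_{\beta v}=B_2\setminus(r_{U_1}\cup s).
\]

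Now suppose $\ell$ meets this set; I claim this forces $Q\in\ell$. Since $\ell\cap B_2$ has either $1$ or $q+1$ points, and the common point of the two lines $\ell\cap\Sigma_\beta$ and $s$ of $B_1$ always lies in $\ell\cap B_2\cap s$, a single-point intersection would give $\ell\cap B_2\subseteq s$, contradicting that $\ell$ meets $B_2\setminus(r_{U_1}\cup s)$. Hence $\ell$ is secant to $B_2$; being already secant to $B_1$, it is a common secant of $B_1$ and $B_2$, and moreover $\ell\neq\overline{s}$ (the extension of $s$), since otherwise $\ell\cap B_2=s$. The crux is to show that every common secant distinct from $\overline{s}$ passes through $Q$. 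For this I would let $\tau_1,\tau_2$ be the Baer involutions of $\pi_{\beta v}$ fixing $B_1,B_2$ pointwise, induced respectively by $\tau_\beta$ and by a $\phi_\lambda$-conjugate of some $\tau_\rho$, so that both have companion automorphism $x\mapsto x^q$. A line of $\pi_{\beta v}$ is a common secant precisely when it is fixed by both $\tau_1$ and $\tau_2$, equivalently by $g:=\tau_1\tau_2$ together with $\tau_1$. As $g$ is linear (the companion automorphisms compose to the identity), by Proposition~\ref{prop1} it fixes pointwise the $q+1\geq 3$ points of $s$ and the point $Q\notin\overline{s}$; hence $g$ fixes $\overline{s}$ pointwise, and being a nontrivial (since $B_1\neq B_2$) linear collineation fixing a line pointwise and an external point, $g$ is a homology with axis $\overline{s}$ and center $Q$. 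Its fixed lines are exactly $\overline{s}$ and the pencil through $Q$, so the common secant $\ell\neq\overline{s}$ passes through $Q$. Since $\ell\cap r_{U_1}=\{P_{\beta u}\}$ and $Q\in r_{U_1}$, this gives $P_{\beta u}=Q$, as desired.

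The main obstacle is precisely the identification of $g=\tau_1\tau_2$ as a homology: it rests on the (standard) facts that the product of two Baer involutions sharing the companion automorphism $x\mapsto x^q$ is linear, that a line is secant to a Baer subplane if and only if it is fixed by the associated Baer involution, and on the small verification $Q\notin\overline{s}$, which is immediate from the explicit coordinates of $Q$ and $s$ in Proposition~\ref{prop1}. It remains to dispose of the edge cases. For the two exceptional pairs of Proposition~\ref{prop:interesctionSllambdapi}, where $\overline{\cS}_{l_\lambda}\cap\pi_{\beta v}$ equals $r_{U_1}\cup l_\lambda$ or $r_{U_1}\cup l_\lambda^{\tau_\eta}$, one computes directly that $\overline{\cR}_{l_\lambda}\cap l_\lambda=l_\lambda\cap\Sigma_\alpha$, so any $\ell\in\cL\setminus\{l_\lambda\}$ meets $l_\lambda$ in the common point of the two $\Sigma_\alpha$-sublines $\ell\cap\Sigma_\alpha$ and $l_\lambda\cap\Sigma_\alpha$, a point of $\overline{\cR}_{l_\lambda}$; hence such $\ell$ cannot meet $\overline{\cS}_{l_\lambda}\setminus\overline{\cR}_{l_\lambda}$ inside $\pi_{\beta v}$ and the statement holds vacuously. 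Finally, the case $v=1$ with $\beta\neq\alpha$, not covered by Proposition~\ref{prop1}, is settled by the same computation as in that proposition, which again yields $B_1\cap B_2=\{Q\}\cup s$, after which the homology argument applies verbatim.
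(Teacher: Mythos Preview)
Your proof is correct and follows the same overall architecture as the paper's: restrict to the plane $\pi_{\beta v}$, identify the two Baer subplanes $B_1=\Sigma_\beta\cap\pi_{\beta v}$ and $B_2=\sigma_{\beta v,\lambda}$, and use the description of their intersection from Proposition~\ref{prop1} to pin down where $\ell$ meets $r_{U_1}$. The differences lie in two places.

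First, the paper replaces $\overline{\cR}_{l_\lambda}$ by $\Sigma_\beta$ along $\ell$ via the observation $\ell\cap\overline{\cD}_\eta\subset\Sigma_\beta$ and $\overline{\cS}_{l_\lambda}\cap\overline{\cD}_\eta=\overline{\cR}_{l_\lambda}$, so that the relevant set becomes $B_2\setminus B_1$. You instead compute $\overline{\cR}_{l_\lambda}\cap\pi_{\beta v}=r_{U_1}\cup s$ directly, arriving at $B_2\setminus(r_{U_1}\cup s)$. Both reductions lead to the same conclusion, and yours has the small bonus of making explicit why $\ell$ must be secant to $B_2$ (the paper leaves this implicit).

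Second, and more substantively, the paper establishes that every common secant of $B_1$ and $B_2$ other than $\overline{s}$ passes through $Q$ by quoting Bose's classification \cite{bose1980intersection} of Baer subplanes meeting in a point--plus--subline. You replace this external reference by the homology argument: the product $g=\tau_1\tau_2$ of the two Baer involutions is a nontrivial linear collineation fixing $\overline{s}$ pointwise and the off-axis point $Q$, hence a homology with center $Q$, and common secants are among its fixed lines. This is a clean, self-contained substitute for the citation and arguably more transparent; the paper's route is shorter because it outsources the work.

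Two minor points of presentation. In the second exceptional case you only spell out the computation for $l_\lambda$, but the argument for $l_\lambda^{\tau_\eta}$ (when $\alpha^{q+1}=-1$) is the symmetric one: $l_\lambda^{\tau_\eta}$ is also secant to $\Sigma_\alpha\cap\pi_{\alpha,-1}$ and satisfies $l_\lambda^{\tau_\eta}\cap\overline{\cR}_{l_\lambda}=l_\lambda^{\tau_\eta}\cap\Sigma_\alpha$, so the same two-secants-meet-inside-the-subplane step applies. And your remark about $v=1$, $\beta\neq\alpha$ is well taken: Proposition~\ref{prop1} is stated only for $v\neq1$, so strictly speaking both your proof and the paper's need the observation that the same computation goes through in that case.
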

\begin{proof}
Since $\ell \cap \overline{\cD}_{\eta} \subset \Sigma_{\beta}$ and $\overline{\cS}_{l_\lambda} \cap \overline{\cD}_{\eta} = \overline{\cR}_{l_\lambda}$, it follows that $\ell \cap \left( \overline{\cS}_{l_\lambda} \setminus \overline{\cR}_{l_\lambda} \right) = \ell \cap \left( \overline{\cS}_{l_\lambda} \setminus \Sigma_{\beta} \right)$. Moreover, $\ell \subset \pi_{\beta v}$, for some $v \in \cU$, and hence 
\begin{align*}
\ell \cap \left( \overline{\cS}_{l_\lambda} \setminus \overline{\cR}_{l_\lambda} \right) = \ell \cap \left( \left( \overline{\cS}_{l_\lambda}  \cap \pi_{\beta v} \right) \setminus \left( \Sigma_{\beta} \cap \pi_{\beta v} \right) \right).
\end{align*} 
Recall that 
\begin{align*}
& \overline{\cS}_{l_\lambda}  \cap \pi_{\beta v} = r_{U_1} \cup l_\lambda, & & \mbox{ if } (\beta, v) = (\alpha, 1), \\
& \overline{\cS}_{l_\lambda}  \cap \pi_{\beta v} = r_{U_1} \cup l_\lambda^{\tau_{\eta}}, & & \mbox{ if } q \mbox{ is odd, } \alpha^{q+1} = -1 \mbox{ and }(\beta,v)=(\alpha,-1), \\
& \overline{\cS}_{l_\lambda}  \cap \pi_{\beta v} = r_{U_1} \cup \sigma_{\beta v, \lambda}, \mbox{ with } \sigma_{\beta v, \lambda} \mbox{ a Baer subplane, } & & \mbox{ otherwise. } %\mbox{ if } (\beta, v) \ne (\alpha, 1).
\end{align*}
Moreover, $\ell \cap r_{U_1} \in \Sigma_{\beta}$ and $\Sigma_{\beta} \cap \pi_{\beta v}$ is a Baer subplane. If $(\beta, v) = (\alpha, 1)$, then $\ell \cap \Sigma_{\beta}$, $l_\lambda \cap \Sigma_{\beta}$ are distinct Baer sublines of the Baer subplane $\Sigma_{\alpha} \cap \pi_{\alpha}$, therefore $\ell \cap l_\lambda \in \Sigma_{\alpha} \cap \pi_{\alpha}$. Similarly, if $q$ is odd, $\alpha^{q+1} = -1$ and $(\beta, v) = (\alpha, -1)$, then $\ell \cap l_\lambda^{\tau_{\eta}} \in \Sigma_{\alpha} \cap \pi_{-\alpha}$, and hence 
\begin{align*}
& \ell \cap \left( \overline{\cS}_{l_\lambda} \setminus \overline{\cR}_{l_\lambda} \right) = \emptyset, & \mbox{ if } \ell \subset \pi_{\alpha v}, \mbox{ where } v \in \{\pm 1\}.
\end{align*} 
In order to prove the statement we will show that if $\overline{\cS}_{l_\lambda}  \cap \pi_{\beta v} = r_{U_1} \cup \sigma_{\beta v, \lambda}$, where $\beta \in \cI$, $v \in \cU$, and $\ell$ is a line of $\pi_{\beta v}$ such that $\ell \cap \Sigma_{\beta} \cap \pi_{\beta v}$ is a Baer subline and $\ell$ intersects $\sigma_{\beta v, \lambda} \setminus \Sigma_{\beta}$ in at least one point, then necessarily $P_{\frac{\beta^q \alpha}{\alpha^q} v^q} \in \ell$. By Proposition~\ref{prop1}, the two Baer subplanes $\sigma_{\beta v, \lambda}$ and $\Sigma_{\beta} \cap \pi_{\beta v}$ share precisely a configuration consisting of the $q+2$ points given by $P_{\frac{\beta^q \alpha}{\alpha^q} v^q}$ and the Baer subline $s_{\beta v, \lambda}$. By \cite{bose1980intersection}, there are precisely $q+2$ lines of $\pi_{\beta v}$ intersecting both $\sigma_{\beta v, \lambda}$ and $\Sigma_{\beta} \cap \pi_{\beta v}$ in a Baer subline, they are obtained by joining $P_{\frac{\beta^q \alpha}{\alpha^q} v^q}$ with a point of $s_{\beta v, \lambda}$, together with the extension of $s_{\beta v, \lambda}$ over $\F_{q^2}$. Therefore if $\ell$ is a line of $\pi_{\beta v}$ such that $\ell \cap \Sigma_{\beta} \cap \pi_{\beta v}$ is a Baer subline and $\ell$ intersects $\sigma_{\beta v, \lambda} \setminus \Sigma_{\beta}$ in at least one point, then necessarily $P_{\frac{\beta^q \alpha}{\alpha^q} v^q} \in \ell$.
\end{proof}

\section{Parallelisms of \texorpdfstring{$\PG(3, q)$}{PG(3, q)}}

With the same notation used before, denote by 
\begin{align*}
\left\{P_{\alpha u} \colon u \in \cU\right\}
\end{align*}
the $q+1$ points of the Baer subline $r_{U_1} \cap \Sigma_{\alpha}$, and by 
\begin{align*}
\left\{\pi_{\alpha v} \colon v \in \cU\right\}
\end{align*}
the $q+1$ planes of $\PG(3, q^2)$ through $r_{U_1}$ intersecting $\Sigma_{\alpha}$ in a Baer subplane, where
\begin{align*}
&  \alpha \in \cI, \\
& P_{\alpha u} = (1,0,\alpha u, 0), & u \in \cU, \\
& \pi_{\alpha v}: X_4 = \alpha v X_2, & v \in \cU.
\end{align*}

Recall that $p\left(P_{\alpha u}, \pi_{\alpha v}\right)$ denotes the Baer subpencil formed by the $q+1$ lines through $P_{\alpha u}$ contained in $\pi_{\alpha v}$ and meeting $\pi_{\alpha v} \cap \Sigma_{\alpha}$ in a Baer subline. %Note that $r_{U_1} \in p(P_{\alpha_k u_i}, \pi_{\alpha_k v_i})$, $\forall \alpha_k \in \cI$, $\forall u_i, v_i \in \cU$. 

\begin{prop}\label{mainp1}
Let $\ell_i$ and $\ell_{j}$ be distinct lines of $\cL$, such that $\ell_i \in p\left( P_{\alpha u_i}, \pi_{\alpha v_i} \right)$, $\ell_{j} \in p\left( P_{\beta u_{j}}, \pi_{\beta v_{j}} \right)$. 
Then $\cR_{\ell_{i}} \ne \cR_{\ell_{j}}$ if and only if either 
\begin{align*}
  & (\alpha, u_i, v_i) = (\beta, u_{j}, v_{j})  
\end{align*}
or
\begin{align*}
& u_{i} v_{j} - u_{j} v_{i} \ne 0. 
\end{align*}
\end{prop}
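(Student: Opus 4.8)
The plan is to encode each regulus $\cR_\ell$ by the Baer subline it traces on the transversal $t_1$, and then to read off, from the subpencil data $(\alpha,u,v)$, both the \emph{direction} and the \emph{position} of that subline.

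First I would reduce the claim to a statement about Baer sublines of $t_1$. Every $\cR_\ell$ lies in $\cD_\eta$ and contains $r_{U_1}\cap\Sigma_\eta$, so it is determined by the set $B_\ell:=\{P\in t_1 : r_P\cap\Sigma_\eta\in\cR_\ell\}$ of its $q+1$ lines, which is a Baer subline of $t_1$ through $U_1$. Since $r_P^{\tau_\eta}=r_P$, the line $r_P$ meets $\ell$ if and only if it meets $\ell^{\tau_\eta}$, whence $r_P\cap\Sigma_\eta\in\cR_\ell=\cD_\eta\cap\cS_\ell$ precisely when $r_P\cap\ell\neq\emptyset$; consequently $\cR_{\ell_i}=\cR_{\ell_j}$ if and only if $B_{\ell_i}=B_{\ell_j}$. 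Coordinatizing $t_1$ by $\F_{q^2}\cup\{\infty\}$ with $(a,1,0,0)\mapsto a$ and $U_1\mapsto\infty$, and intersecting each extended line $r_P$ with the plane $\pi_{\alpha v}\supseteq\ell$, a direct (and routine) computation shows that $B_\ell\setminus\{U_1\}$ is the solution set of an equation $v a^q-u a=k_\ell$ for a scalar $k_\ell\in\F_{q^2}$ attached to the individual line $\ell$. Thus $B_\ell$ is the affine $\F_q$-subline (completed by $U_1$) whose direction is $\ker(a\mapsto va^q-ua)=\{a : a^{q-1}=u/v\}\cup\{0\}$, a one-dimensional $\F_q$-subspace depending only on the ratio $u/v$.

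With this description the two ``if'' implications are immediate, since two Baer sublines of $t_1$ through $U_1$ coincide exactly when they share both direction and coset. If $u_iv_j-u_jv_i\neq0$, then $u_i/v_i\neq u_j/v_j$, the directions differ, so $B_{\ell_i}\neq B_{\ell_j}$ and $\cR_{\ell_i}\neq\cR_{\ell_j}$. If instead $(\alpha,u_i,v_i)=(\beta,u_j,v_j)$ with $\ell_i\neq\ell_j$, the two sublines share a direction while $k_{\ell_i}\neq k_{\ell_j}$ (distinct lines of one subpencil meet $r_{U_1}$ in the same point but determine distinct, parallel sublines on $t_1$), so again $B_{\ell_i}\neq B_{\ell_j}$ and $\cR_{\ell_i}\neq\cR_{\ell_j}$.

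The substantive part is the converse: assuming $(\alpha,u_i,v_i)\neq(\beta,u_j,v_j)$ and $u_iv_j-u_jv_i=0$, I must show $\cR_{\ell_i}=\cR_{\ell_j}$, i.e.\ that two equidirectional sublines in fact share their coset. This is where I expect the main obstacle to lie. The approach is to make $k_\ell$ explicit: membership $\ell\in\cL$ forces $\ell^{\tau_\alpha}=\ell$, which pins down the slope of $\ell$ inside $\pi_{\alpha v}$, and using $u^q=u^{-1}$, $v^q=v^{-1}$ for $u,v\in\cU$ together with the distinct-norm property of $\Lambda$ one obtains $k_\ell$ in closed form. The crux is then the algebraic identity showing that the two resulting cosets of the common direction coincide; here the group generated by $\varphi$ and the maps $\xi_\lambda$ (which fix $r_{U_1}$ pointwise, stabilize $\cD_\eta$, and permute the planes $\pi_{\alpha v}$) is the natural tool, since it transports one subpencil to another of the same direction and reduces the required identity to the already-understood family $\{l_\lambda\}$ analyzed in Proposition~\ref{prop1}.
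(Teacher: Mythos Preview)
Your reduction to Baer sublines of $t_1$ through $U_1$ is correct and is, up to packaging, the same argument the paper gives: the paper passes to the Klein quadric and represents $\cR_\ell$ as the conic $L^{\perp}\cap\cQ^-(3,q)$ through the image $T_\infty$ of $r_{U_1}$, but the membership condition it derives, $a^q-\tfrac{u}{v}\,a=\text{const}$, is exactly your equation $va^q-ua=k_\ell$. Your two ``if'' implications are fine and agree with the paper.

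The direction you single out as ``the substantive part'' --- that $(\alpha,u_i,v_i)\ne(\beta,u_j,v_j)$ together with $u_iv_j=u_jv_i$ forces $\cR_{\ell_i}=\cR_{\ell_j}$ for the \emph{given} lines $\ell_i,\ell_j$ --- is actually false, so no computation with $\varphi$ and the $\xi_\lambda$ will close it. In your own language: once $u_i/v_i=u_j/v_j$, the sublines $B_{\ell_i},B_{\ell_j}$ share a direction, but the coset $k_\ell$ still depends on which line of the pencil is chosen. As you already argued, the $q$ lines of $p(P_{\alpha u_i},\pi_{\alpha v_i})\setminus\{r_{U_1}\}$ yield $q$ pairwise distinct reguli; since there are only $q$ reguli of that fixed direction through $r_{U_1}\cap\Sigma_\eta$, each such pencil hits \emph{all} of them. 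Hence for a fixed $\ell_i$ there is exactly one $\ell_j$ in the other pencil with $\cR_{\ell_i}=\cR_{\ell_j}$, while the remaining $q-1$ choices give $\cR_{\ell_i}\ne\cR_{\ell_j}$.

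The paper's proof in fact establishes only the implication ``$(\alpha,u_i,v_i)=(\beta,u_j,v_j)$ or $u_iv_j\ne u_jv_i$ $\Rightarrow$ $\cR_{\ell_i}\ne\cR_{\ell_j}$'' (its closing ``a contradiction'' tacitly presupposes $u_iv_j\ne u_jv_i$). For the use in Theorem~\ref{main} that direction is the one that matters; the converse needed there is only the existential statement that if two distinct pairs in $\cP$ satisfy $u_iv_j=u_jv_i$ then \emph{some} $\ell_i,\ell_j\in\cL_{\cP}$ share a regulus --- and your coset count already delivers that.
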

\begin{proof}
Consider the six-dimensional $\F_q$-vector spaces
\begin{align*}
& V_{\rho} =  \left\{(a,b, \alpha_i c, - \rho c^q, d, \rho^2 a^q) \mid a,b,c,d \in \F_{q^2}, \rho^q b + \rho b^q = \rho^q d + \rho d^q = 0 \right\}, & \rho \in \Lambda,  
\end{align*}
and the Klein quadric $\cQ^+(5, q^2)$ of $\PG(5, q^2)$:
\begin{align*}
& \cQ^+(5, q^2): X_1 X_6 - X_2 X_5 + X_3 X_4 = 0.  
\end{align*}
Set 
\begin{align*}
& \tilde{\Sigma}_{\rho} = \PG(V_{\rho}) \simeq \PG(5, q), & \rho \in \Lambda.
\end{align*}
Some calculations show that the lines of $\PG(3, q^2)$ intersecting $\Sigma_{\rho}$ in a Baer subline are embedded via Pl\"ucker coordinates to the set of points $\cQ_{\rho}$, where
\begin{align*}
&\cQ_{\rho} = \tilde{\Sigma}_{\rho} \cap \cQ^+(5, q^2) \simeq \cQ^+(5, q), & \rho \in \Lambda. 
\end{align*}
By Lemma~\ref{lemma1}, {\em ii)}, we have that 
\begin{align*}
& \cQ_{\rho} \cap \cQ_{\rho'} = \cQ^-(3, q) = \left\{T_a = \left( 0, a^{q+1}, a, a^q, 1, 0 \right) \mid a \in \F_{q^2} \right\} \cup \left\{(0,1,0,0,0,0)\right\}, & \rho \ne \rho'.
\end{align*}
Note that, for some fixed $\lambda_1, \lambda_2 \in \F_{q^2} \setminus \{0\}$, $\lambda_1^{q-1} \ne \lambda_2^{q-1}$,  
\begin{align*}
& \rho \left( \lambda_1 \lambda_2^q - \lambda_1^q \lambda_2 \right) \left( 0,  x^{q+1}, x y^q, x^q y, y^{q+1}, 0 \right) \in V_{\rho}, & x,y \in \F_{q^2}, y \ne 0,
\end{align*}
is a representative of the point $T_a$, where $a = x/y$.
Moreover, the line $r_{U_1}$ is mapped by the Grassmann embedding to the point $T_\infty = (0,1,0,0,0,0)$. 

Let $\ell_i \in p\left( P_{\alpha u_i}, \pi_{\alpha v_i} \right)$, $\ell_j \in p\left( P_{\beta u_{j}}, \pi_{\beta v_{j}} \right)$, $\ell_i \ne \ell_j$, $\ell_i \ne r_{U_1}$, $\ell_j \ne r_{U_1}$. Then there exists $u' \in \cU \setminus \{u_i, u_{j}\}$ such that $\ell_i$ is the line joining $P_{\alpha u_i}$ and a point of $\langle P_{\alpha u'}, (0,1,0, \alpha v_i) \rangle_{q^2} \setminus \{P_{\alpha u'}\}$. Similarly, $\ell_j$ is the line joining $P_{\beta u_j}$ and a point of $\langle P_{\beta u'}, (0,1,0, \beta v_j) \rangle_{q^2} \setminus \{P_{\beta u'}\}$. Therefore, there exist $\lambda_1, \lambda_2 \in \F_q$, such that 
\begin{align*}
& \ell_i = \left\langle \left(x_i, 0, \alpha x_i^q, 0 \right), \left(\lambda_1 x', y_i, \lambda_1 \alpha x'^q, \alpha y_i^q \right) \right\rangle_{q^2}, \\
& \ell_j = \left\langle \left(x_{j}, 0, \beta x_{j}^q, 0 \right), \left(\lambda_2 x', y_{j}, \lambda_2 \beta x'^q, \beta y_{j}^q \right) \right\rangle_{q^2}, 
\end{align*}
where $x_i, y_i, x_{j}, y_{j}, x' \in \F_{q^2}$ are such that $x_i^{q-1} = u_i$, $y_i^{q-1} = v_i$, $x_{j}^{q-1} = u_{j}$, $y_{j}^{q-1} = v_{j}$, $x'^{q-1} = u'$. The Pl\"ucker coordinates of the lines $\ell_i$, $\ell_j$ are $L_i$, $L_{j}$, where
\begin{align*}
& L_i = \left( x_i y_i, \lambda_1 \alpha \left(x_1 x'^q - x_1^q x'\right), \alpha x_i y_i^q, - \alpha x_i^q y_i, 0, \alpha^2 x_i^q y_i^q \right), \\
& L_{j} = \left( x_{j} y_{j}, \lambda_2 \beta \left(x_{j} x'^q - x_{j}^q x'\right), \beta x_{j} y_{j}^q, - \beta x_{j}^q y_{j}, 0, \beta^2 x_{j}^q y_{j}^q \right).
\end{align*}
Let $\perp_{\rho}$ denote the polarity of $\Sigma_{\rho}$ associated with $\cQ_{\rho}$ and observe that $L_i^\perp \cap \cQ^-(3, q)$ is a conic of $\cQ^-(3, q)$ through the point $T_\infty$. Such a conic is the image of the regulus $\cR_{\ell_i}$ under the Grassmann embedding. Therefore  
\begin{align*}
& \cR_{\ell_i} = \cR_{\ell_j} \iff L_i^{\perp_{\alpha}} \cap \cQ^-(3, q) = L_{j}^{\perp_{\beta}} \cap \cQ^-(3, q).
\end{align*}
Furthermore,
\begin{align*}
& T_\infty \in L_i^{\perp_{\alpha}} \cap \cQ^-(3, q) \cap L_{j}^{\perp_{\beta}} \cap \cQ^-(3, q), \\
& |L_i^{\perp_{\alpha}} \cap \cQ^-(3, q) \cap L_{j}^{\perp_{\beta}} \cap \cQ^-(3, q)| \ge 3 \iff L_i^{\perp_{\alpha}} \cap \cQ^-(3, q) = L_{j}^{\perp_{\beta}} \cap \cQ^-(3, q).
\end{align*}
For a point $T_a = (0, a^{q+1}, a, a^q, 1, 0) \in \cQ^-(3, q)$, we have that 
\begin{align*}
T_a \in L_i^{\perp_{\alpha}} & \iff - \alpha x_i^q y_i a + \alpha x_i y_i^q a^q - \alpha \lambda_1 \left(x_i x'^q - x_i^q x'\right) = 0 & \\
& \iff a^q - \frac{x_i^q y_i}{x_i y_i^q} a - \lambda_1 \frac{x_i x'^q - x_i^q x'}{x_i y_i^q} = 0.
\end{align*}
Similarly, 
\begin{align*}
T_a \in L_{j}^{\perp_{\beta}} \iff a^q - \frac{x_{j}^q y_{j}}{x_{j} y_{j}^q} a - \lambda_2 \frac{x_{j} x'^q - x_{j}^q x'}{x_{j} y_{j}^q} = 0. && 
\end{align*}
Suppose that $(\alpha, u_i, v_i) = (\beta, u_{j}, v_{j})$. Then we may assume $x_i = x_{j}$ and $y_i = y_{j}$. We deduce that necessarily $\lambda_1 \ne \lambda_2$, otherwise $\ell_i = \ell_j$. If $T_a \in L_i^{\perp_{\alpha}} \cap L_{j}^{\perp_{\beta}}$, then $\lambda_1 = \lambda_2$, a contradiction. Therefore  
$\cR_{\ell_i} \ne \cR_{\ell_j}$.

Suppose that $(\alpha, u_i, v_i) \ne (\beta, u_{j}, v_{j})$. By \cite[Corollary 1.24]{hirschfeld1998projective}, since 
\begin{align*}
\left(\frac{\lambda_1 \left(x_i x' - x_i^q x'\right)}{x_i y_i^q}\right)^q & = \lambda_1 \frac{x_i^q x' - x_i x'^q}{x_i^q y_i} \\
& - \lambda_1 \frac{x_i x'^q - x_i^q x'}{x_i y_i^q} \cdot \frac{x_i y_i^q}{x_i^q y_i},
\end{align*}
the equation
\begin{align}
& X^q - \frac{x_i^q y_i}{x_i y_i^q} X - \lambda_1 \frac{x_i x'^q - x_i^q x'}{x_i y_i^q} = 0, \label{eq1}
\end{align}
in the indeterminate $X$ has $q$ solution, as expected. In particular, $T_{a_1}, T_{a_2}$ are distinct points of $L_i^{\perp_{\alpha}}$ if and only if $a_1, a_2 \in \F_{q^2}$ are distinct solutions of \eqref{eq1}, that is 
\begin{align*}
& a_2 = \frac{\mu}{x_i^q y_i} + a_1, & \mbox{ for some } \mu \in \F_q \setminus \{0\}.
\end{align*}
Analogously, if $T_{a_1}, T_{a_2}$ are distinct points of $L_{j}^{\perp_{\beta}}$, then 
\begin{align*}
& a_2 = \frac{\mu'}{x_{j}^q y_{j}} + a_1, & \mbox{ for some } \mu' \in \F_q \setminus \{0\}.
\end{align*}
Assume by contradiction that $\cR_{\ell_i} = \cR_{\ell_j}$, then there are three points $T_{\infty}, T_{a_1}, T_{a_2} \in L_i^{\perp_{\alpha}} \cap \cQ^-(3, q) \cap L_{j}^{\perp_{\beta}} \cap \cQ^-(3, q)$. By the previous arguments, the following holds
\begin{align*}
& \frac{\mu}{x_i^q y_i} = \frac{\mu'}{x_{j}^q y_{j}}, & \mbox{ for some } \mu, \mu' \in \F_q \setminus \{0\}.
\end{align*}
Hence 
\begin{align*}
\left(\frac{\mu}{x_i^q y_i}\right)^{q-1} = \left(\frac{\mu'}{x_{j}^q y_{j}}\right)^{q-1} & \iff \frac{x_i^q y_i}{x_i y_i^q} = \frac{x_{j}^q y_{j}}{x_{j} y_{j}^q} \\
& \iff u_i v_{j} =  u_{j} v_i,
\end{align*}
a contradiction.
\end{proof}

\begin{prop}\label{mainp2}
Let $\ell_{i}$ and $\ell_j$ be distinct lines of $\cL$, such that $\ell_{i} \in p\left( P_{\alpha u_i}, \pi_{\alpha v_i} \right)$, $\ell_j \in p\left( P_{\beta u_{j}}, \pi_{\beta v_{j}} \right)$. Then $\ell_j \cap \left( \overline{\cS}_{\ell_i} \setminus \overline{\cR}_{\ell_i} \right) = \emptyset$ if and only if either 
\begin{align*}
  & (\alpha, u_i, v_i) = (\beta, u_j, v_j)  
\end{align*}
or
\begin{align*}
& \alpha u_i \left(\beta v_j\right)^q - \left(\alpha v_i\right)^q \beta u_j \ne 0. 
\end{align*}
\end{prop}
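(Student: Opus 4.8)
The plan is to reduce the statement to the case $u_i = v_i = 1$ treated in Propositions \ref{prop1} and \ref{prop2}. To this end I would introduce the homography $g = \diag(a, b, a^q, b^q)$ with $a^{q-1} = u_i^{-1}$ and $b^{q-1} = v_i^{-1}$. A direct verification shows that $g$ stabilizes each $\Sigma_{\rho}$, $\rho \in \Lambda$ (in particular $\Sigma_{\eta}$), fixes $r_{U_1}$, commutes with $\tau_{\eta}$, and acts by $P_{\alpha u} \mapsto P_{\alpha a^{q-1} u}$ and $\pi_{\alpha v} \mapsto \pi_{\alpha b^{q-1} v}$; hence $g(\cL) = \cL$, $g(\ell_i) = l_{\lambda}$ for some $\lambda \in \F_q$, and $g(\ell_j) \in p(P_{\beta u_j'}, \pi_{\beta v_j'})$ with $u_j' = u_i^{-1} u_j$ and $v_j' = v_i^{-1} v_j$. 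Since $g$ commutes with $\tau_{\eta}$, it maps $\cS_{\ell_i}$ to $\cS_{l_\lambda}$ and $\overline{\cS}_{\ell_i} \setminus \overline{\cR}_{\ell_i}$ to $\overline{\cS}_{l_\lambda} \setminus \overline{\cR}_{l_\lambda}$, so the emptiness of the intersection is preserved. A substitution then shows that $(\alpha, u_i, v_i) = (\beta, u_j, v_j)$ becomes $(\alpha, 1, 1) = (\beta, u_j', v_j')$, while $\alpha u_i (\beta v_j)^q - (\alpha v_i)^q \beta u_j = u_i v_i^q\bigl(\alpha \beta^q (v_j')^q - \alpha^q \beta u_j'\bigr)$; as $u_i v_i^q \ne 0$, the two nonvanishing conditions are equivalent. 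It therefore suffices to prove the statement for $\ell_i = l_\lambda$.

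For $\ell_i = l_\lambda$ I would repeat the opening reduction of the proof of Proposition \ref{prop2}: by Lemma \ref{lemma1} {\em ii)} we have $\ell_j \cap \overline{\cD}_{\eta} \subset \Sigma_{\beta}$ and $\overline{\cS}_{l_\lambda} \cap \overline{\cD}_{\eta} = \overline{\cR}_{l_\lambda}$, whence $\ell_j \cap (\overline{\cS}_{l_\lambda} \setminus \overline{\cR}_{l_\lambda}) = \ell_j \cap \bigl((\overline{\cS}_{l_\lambda} \cap \pi_{\beta v_j}) \setminus \Sigma_{\beta}\bigr)$; moreover $r_{U_1} \subset \overline{\cR}_{l_\lambda}$, so the component $r_{U_1}$ of $\overline{\cS}_{l_\lambda} \cap \pi_{\beta v_j}$ furnished by Proposition \ref{prop:interesctionSllambdapi} may be discarded. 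The key remark is that, since $\ell_j \ne r_{U_1}$ meets $r_{U_1}$ only in $P_{\beta u_j}$, the line $\ell_j$ passes through the distinguished point $P_{\frac{\beta^q \alpha}{\alpha^q} v_j^q}$ of Proposition \ref{prop1} if and only if $P_{\beta u_j} = P_{\frac{\beta^q \alpha}{\alpha^q} v_j^q}$, that is, exactly when $\alpha \beta^q v_j^q - \alpha^q \beta u_j = 0$.

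In the generic case $\overline{\cS}_{l_\lambda} \cap \pi_{\beta v_j} = r_{U_1} \cup \sigma_{\beta v_j, \lambda}$ the residual intersection equals $\ell_j \cap (\sigma_{\beta v_j, \lambda} \setminus \Sigma_{\beta})$. If the algebraic condition holds then $\ell_j$ avoids the distinguished point, and Proposition \ref{prop2} gives emptiness. For the converse I would invoke Proposition \ref{prop1}: when $P_{\beta u_j}$ is the distinguished point, the $q+1$ lines of the subpencil $p(P_{\beta u_j}, \pi_{\beta v_j})$ are precisely the joins of $P_{\beta u_j}$ with the $q+1$ points of the shared Baer subline $s_{\beta v_j, \lambda}$ (by the configuration count of \cite{bose1980intersection}), and each such join meets $\sigma_{\beta v_j, \lambda}$ in a Baer subline whose only $\Sigma_{\beta}$-points are $P_{\beta u_j}$ and one point of $s_{\beta v_j, \lambda}$; hence it meets $\sigma_{\beta v_j, \lambda} \setminus \Sigma_{\beta}$ in $q-1$ points and the intersection is nonempty. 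This establishes the equivalence whenever $\pi_{\beta v_j}$ is not one of the two exceptional planes of Proposition \ref{prop:interesctionSllambdapi}.

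The main obstacle is exactly those two exceptional planes. For $(\beta, v_j) = (\alpha, 1)$ one has $\overline{\cS}_{l_\lambda} \cap \pi_{\alpha} = r_{U_1} \cup l_\lambda$, and $\ell_j, l_\lambda$ are Baer sublines of the Baer subplane $\Sigma_{\alpha} \cap \pi_{\alpha}$ meeting inside $\Sigma_{\alpha}$, so the intersection is empty; this matches the value $\alpha^{q+1}(1 - u_j)$ of the algebraic expression, which vanishes precisely for $(\beta, u_j, v_j) = (\alpha, 1, 1)$. The truly delicate configuration is $q$ odd, $\alpha^{q+1} = -1$ and $(\beta, v_j) = (\alpha, -1)$, where $\tau_{\eta}$ fixes $\Sigma_{\alpha}$ and $\overline{\cS}_{l_\lambda} \cap \pi_{\alpha} = r_{U_1} \cup l_\lambda^{\tau_{\eta}}$, with $l_\lambda^{\tau_{\eta}}$ itself a line of $\cL$ through the distinguished point. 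Here the intersection of $\ell_j$ with $\overline{\cS}_{l_\lambda} \setminus \overline{\cR}_{l_\lambda}$ is sensitive to which line of the subpencil $\ell_j$ is, so I would coordinate the analysis with Proposition \ref{mainp1}: all these lines share the regulus $\cR_{l_\lambda}$ (the case $u_j = v_j$), and reading the present equivalence in tandem with that regulus comparison is what aligns the vanishing of $\alpha \beta^q v_j^q - \alpha^q \beta u_j$ with the relevant line of the pencil. I expect this bookkeeping around the self-paired subgeometry $\Sigma_{\alpha}$ to be the crux of the argument.
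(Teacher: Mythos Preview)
Your reduction via the diagonal homography $g$ and the subsequent appeal to Proposition~\ref{prop2} is exactly the paper's approach: the paper uses the projectivity $\gamma = \diag(1, b, u_i^{-1}, b v_i^{-1})$ with $b^{q-1} = u_i/v_i$, which is projectively the same normalisation, verifies that it fixes every $\Sigma_\rho$ and sends $\ell_i$ to some $l_\lambda$, and then reads off the point condition $P_{\beta u_j}^\gamma = P_{\frac{\beta^q\alpha}{\alpha^q}v_j^q v_i^{-q}}$ from Proposition~\ref{prop2}. Your computation of how the two algebraic conditions transform under $g$ is correct and matches the paper's conclusion.

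Where you diverge from the paper is in scope. The paper's argument is phrased as a proof by contradiction starting from $\ell_j \cap (\overline{\cS}_{\ell_i}\setminus\overline{\cR}_{\ell_i}) \ne \emptyset$, and it only establishes the implication ``either condition $\Rightarrow$ empty intersection''; the converse is not treated. Your generic-case converse (using Proposition~\ref{prop1} and the Bose classification of common secants to two Baer subplanes meeting in a $(q{+}2)$-configuration) is a genuine addition and is sound: every line of $p(P_{\beta u_j},\pi_{\beta v_j})$ through the distinguished point meets $\sigma_{\beta v_j,\lambda}$ in a Baer subline with exactly two points in $\Sigma_\beta$, hence $q-1$ points outside.

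Your diagnosis of the self-paired case $q$ odd, $\alpha^{q+1}=-1$, $(\beta,u_j,v_j)=(\alpha,-1,-1)$ is correct, and your hesitation is warranted. For such $\ell_j \ne l_\lambda^{\tau_\eta}$ one has $\ell_j \cap l_\lambda^{\tau_\eta} \in \Sigma_\alpha$, and since $l_\lambda^{\tau_\eta}\cap\overline{\cR}_{l_\lambda} = l_\lambda^{\tau_\eta}\cap\Sigma_\alpha$ (apply $\tau_\eta$ to $l_\lambda\cap\overline{\cR}_{l_\lambda} = l_\lambda\cap\Sigma_\alpha$), the intersection $\ell_j\cap(\overline{\cS}_{l_\lambda}\setminus\overline{\cR}_{l_\lambda})$ is empty even though neither stated condition holds. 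So the ``only if'' as written does not survive this configuration for every line of that pencil; your instinct to couple it with Proposition~\ref{mainp1} is the right one, since in the intended application (Theorem~\ref{main}) this configuration is already excluded by condition~\eqref{property1}. The paper does not address this; it simply proves the direction it needs.
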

\begin{proof}
Consider the projectivity $\gamma$ represented by the matrix
\begin{align*}
\begin{pmatrix}
1 & 0 & 0 & 0 \\
0 & b & 0 & 0 \\
0 & 0 & u_i^{-1} & 0 \\
0 & 0 & 0 & b v_i^{-1}
\end{pmatrix}, 
\end{align*}
for some $b \in \F_{q^2}$, such that $b^{q-1} = \frac{u_i}{v_i}$. Then $\gamma$ fixes $\overline{\cD}_{\eta}$. In particular, $\gamma$ stabilizes $t_1$, $t_2$ and $r_{U_1}$. For any $\rho \in \Lambda$, a point $\left(x,y,\rho x^q, \rho y^q\right)$ of $\Sigma_{\rho}$ is sent by $\gamma$ to $\left(x, by, \rho u_i^{-1} x^q, \rho b v_i^{-1} y^q\right)$ that belongs to $\Sigma_{\rho}$. Indeed, let $c \in \F_{q^2}$ with $c^{q-1} = v_i^{-1}$, then $c^q = c v_i^{-1}$, $c^q b = b^q c u_i^{-1}$ and $\left(x, by, \rho u_i^{-1} x^q, \rho b v_i^{-1} y^q\right)$ represents the point
\begin{align*}
b^q c \left(x, by, \rho u_i^{-1} x^q, \rho b v_i^{-1} y^q\right) & = \left( b^qc x, b^{q+1} c y, \rho b^q c u_i^{-1} x^q, \rho b^{q+1} c v_i^{-1} y^q \right) \\
& = \left( b^q c x, b^{q+1} c y, \rho b c^q x^q, \rho b^{q+1} c^q y^q \right)
\end{align*}
Therefore, each of the subgeometries $\Sigma_{\rho}$, with $\rho \in \Lambda$, is left invariant by $\gamma$. Assume by contradiction that $\ell_j \in p\left( P_{\beta u_j}, \pi_{\beta v_j} \right)$ is such that $\ell_j \cap \left( \overline{\cS}_{\ell_{i}} \setminus \overline{\cR}_{\ell_{i}} \right) \ne \emptyset$ and observe that 
\begin{align*}
\ell_j \cap \left( \overline{\cS}_{\ell_i} \setminus \overline{\cR}_{\ell_i} \right) \ne \emptyset \iff \ell_j^\gamma \cap \left( \overline{\cS}_{\ell_i}^\gamma \setminus \overline{\cR}_{\ell_i}^\gamma \right) \ne \emptyset
\end{align*}
Since $\gamma$ fixes $\Sigma_{\eta}$ and $\cS_{\ell_{i}}$ is the Desarguesian line-spread of $\Sigma_{\eta}$ with transversal lines $\ell_{i}$, $\ell_i^{\tau_{\eta}}$, then $\cS_{\ell_i}^\gamma$ is the Desarguesian line-spread of $\Sigma_{\eta}$ with transversal lines $\ell_i^\gamma$, $\left(\ell_i^\gamma\right)^{\tau_{\eta}}$. Hence 
\begin{align*}
& \overline{\cS}_{\ell_i}^\gamma = \overline{\cS}_{\ell_i^\gamma}, \\
& \overline{\cR}_{\ell_i}^\gamma = \overline{\cR}_{\ell_i^\gamma}.
\end{align*}
The line $\ell_i$ meets $r_{U_1}$ in the point $P_{\alpha u_i} = (1,0,\alpha u_i,0)$ and lies in the plane $\pi_{\alpha v_i}: X_4 = \alpha v_i X_2$. Hence the line $\ell_i^\gamma$ meets $r_{U_1}$ in the point $P_{\alpha u_i}^\gamma = (1, 0, \alpha, 0) = P_{\alpha}$ and lies in the plane $\pi_{\alpha v_i}^\gamma = \pi_{\alpha}: X_4 = \alpha X_2$. Moreover, $\ell_i^\gamma \cap \Sigma_{\alpha} = \left(\ell_i \cap \Sigma_{\alpha}\right)^\gamma$ is a Baer subline of $\Sigma_{\alpha}$, since $\ell_i \cap \Sigma_{\alpha}$ is a Baer subline. Therefore, 
\begin{align*}
& \ell_i^\gamma = l_\lambda, & \mbox{ for some } \lambda \in \F_q, 
\end{align*}
being $\ell_i^\gamma$ a line of $\cL$ meeting $r_{U_1}$ in $P_{\alpha}$ and contained in $\pi_{\alpha}$. Similarly, $\ell_j^\gamma$ is a line of $\cL$ intersecting $r_{U_1}$ in $P_{\beta u_j}^\gamma = (1,0,\beta u_j u_i^{-1}, 0)$ and contained in $\pi_{\beta v_j}^\gamma: X_4 = \beta v_j v_i^{-1} X_2$. By Proposition~\ref{prop2}, 
\begin{align*}
\ell_j^\gamma \cap \left( \overline{\cS}_{\ell_{i}}^\gamma \setminus \overline{\cR}_{\ell_{i}}^\gamma \right) \ne \emptyset \implies P_{\beta u_j}^\gamma = \left(1,0,\beta u_j u_i^{-1}, 0\right) = P_{\frac{\beta^q \alpha}{\alpha^q} v_j^q v_i^{-q}} = \left(1,0,\frac{\beta^q \alpha}{\alpha^q} v_j^q v_i^{-q}, 0\right).
\end{align*}   
If $(\alpha, u_i, v_i) = (\beta, u_j, v_j)$, then $\ell_j^\gamma \cap \left( \overline{\cS}_{\ell_i}^\gamma \setminus \overline{\cR}_{\ell_i}^\gamma \right) = \emptyset$ by Proposition~\ref{prop2}, a contradiction.
Otherwise, 
\begin{align*}
\alpha u_i \left(\beta v_j\right)^q = \left(\alpha v_i\right)^q \beta u_j, 
%\frac{u_j}{u_i} = \frac{v_j^q}{v_i^q} = \frac{v_i}{v_j}.
\end{align*}
%Set $u = \frac{u_i}{u_j} \in \cU$. 
%If $u \ne 1$, then 
%\begin{align*}
%u_j = \frac{u_i}{u}, v_j = u v_i,
%\end{align*}
a contradiction.
\end{proof}

For a set
\begin{align*}
& \cP = \left\{\left(P_{\alpha_{1} u_1}, \pi_{\alpha_{1} v_1}\right), \dots, \left(P_{\alpha_{{q+1}} u_{q+1}}, \pi_{\alpha_{{q+1}} v_{q+1}}\right)\right\}, & \alpha_{i} \in \cI, u_i, v_i \in \cU, 
\end{align*}
define the following set of lines 
\begin{align*}
\cL_{\cP} = \left\{p\left(P_{\alpha_{i} u_i}, \pi_{\alpha_{i} v_i}\right) \setminus \{r_{U_1}\} \mid \left(P_{\alpha_{i} u_i}, \pi_{\alpha_{i} v_i}\right) \in \cP \right\} \subset \cL. 
\end{align*} 
Since $|\cP| = q+1$, $\cL_{\cP}$ contains $q^2+q$ lines of $\PG(3, q^2)$. Furthermore, if $\ell$ is a line of $\cL_{\cP}$, then 
\begin{itemize}
\item[{\em i)}] $\ell \cap r_{U_1} \in P_{\alpha_{i} u_i}$, for some $\alpha_{i} \in \cI$, $u_i \in \cU$, 
\item[{\em ii)}] $\ell \subset \pi_{\alpha_{i} v_i}$,  for some $\alpha_{i} \in \cI$, $v_i \in \cU$,  
\item[{\em iii)}] $\ell \cap \Sigma_{\alpha_{i}}$ is a Baer subline.
\end{itemize}

\begin{defin}\label{goodset}
    A set
\begin{align*}
& \cP = \left\{\left(P_{\alpha_{1} u_1}, \pi_{\alpha_{1} v_1}\right), \dots, \left(P_{\alpha_{{q+1}} u_{q+1}}, \pi_{\alpha_{{q+1}} v_{q+1}}\right)\right\}, & \alpha_{i} \in \cI, u_i, v_i \in \cU, 
\end{align*}
consisting of $q+1$ pairs, is said to be a {\em good set} if $\forall \alpha_{i}, \alpha_{j} \in \cI$, $\forall u_i, u_j, v_i, v_j \in \cU$, with $(\alpha_{i}, u_{i}, v_i) \ne (\alpha_{j}, u_j, v_j)$, it satisfies the following properties:
\begin{align}
\left(P_{\alpha_{i} u_i}, \pi_{\alpha_{i} v_i}\right), \left(P_{\alpha_{j} u_{j}}, \pi_{\alpha_{j} v_j}\right) \in \cP & \implies u_i v_j - v_i u_j \ne 0, \label{property1} \\
\left(P_{\alpha_{i} u_i}, \pi_{\alpha_{i} v_i}\right), \left(P_{\alpha_{j} u_{j}}, \pi_{\alpha_{j} v_{j}}\right) \in \cP & \implies \alpha_{i} u_i \left(\alpha_{j} v_{j}\right)^q - \left(\alpha_{i} v_i\right)^q \alpha_{j} u_{j} \ne 0. \label{property2}\end{align}
\end{defin}

We are now ready to show the main result of the paper. 

\begin{theorem}\label{main}
Let 
\begin{align*}
& \cP = \left\{\left(P_{\alpha_{1} u_1}, \pi_{\alpha_{1} v_1}\right), \dots, \left(P_{\alpha_{{q+1}} u_{q+1}}, \pi_{\alpha_{{q+1}} v_{q+1}}\right)\right\}, & \alpha_{i} \in \cI, u_i, v_i \in \cU, 
\end{align*}
and let 
\begin{align*}
\cL_{\cP} = \left\{p\left(P_{\alpha_{i} u_i}, \pi_{\alpha_{i} v_i}\right) \setminus \{r_{U_1}\} \mid \left(P_{\alpha_{i} u_i}, \pi_{\alpha_{i} v_i}\right) \in \cP \right\}. 
\end{align*}

The set 
\begin{align*}
\Pi_{\cP} = \left\{ \left(\cS_\ell \setminus \cR_{\ell}\right) \cup {\cR_{\ell}^o} \mid \ell \in \cL_{\cP} \right\} \cup \left\{\cD_{\eta}\right\} 
\end{align*}
is a line-parallelism of $\Sigma_{\eta}$ if and only if $\cP$ is a good set.
\end{theorem}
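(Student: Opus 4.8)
The plan is to recast ``$\Pi_{\cP}$ is a parallelism'' as a purely combinatorial disjointness condition and then match that condition, spread pair by spread pair, to the two defining properties of a good set. First I would record that every member of $\Pi_{\cP}$ is already a line-spread of $\Sigma_{\eta}$: the spread $\cD_{\eta}$ is Desarguesian, and for each $\ell\in\cL_{\cP}\subseteq\cL$ the set $(\cS_{\ell}\setminus\cR_{\ell})\cup\cR_{\ell}^{o}$ is a Hall spread by Lemma~\ref{lemma3} together with Remark~\ref{rem1}. Since $\PG(3,q)$ has $(q^{2}+1)(q^{2}+q+1)$ lines, each spread has $q^{2}+1$ lines, and $|\Pi_{\cP}|\le 1+|\cL_{\cP}|=q^{2}+q+1$, a counting argument shows that $\Pi_{\cP}$ is a parallelism if and only if its members are pairwise line-disjoint: pairwise disjointness forces the members to be distinct (hence exactly $q^{2}+q+1$ of them) and forces the union of the spreads to be the whole line set. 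Thus the statement reduces to proving that the $q^{2}+q+1$ spreads are mutually skew exactly when $\cP$ is good.

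Next I would dispose of the pairs involving $\cD_{\eta}$, which impose no condition. For any $\ell$ one has $\cD_{\eta}\cap\cS_{\ell}=\cR_{\ell}$, so $\cD_{\eta}$ meets $\cS_{\ell}\setminus\cR_{\ell}$ in no line; and no line of $\cD_{\eta}$ can lie in $\cR_{\ell}^{o}$, since such a line would have to meet all $q+1$ pairwise skew lines of $\cR_{\ell}\subseteq\cD_{\eta}$. Hence $\cD_{\eta}$ is automatically skew to every Hall spread, and only the mutual disjointness of the Hall spreads remains to be analysed.

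The heart of the argument is a classification of a hypothetical common line $m$ of two Hall spreads $H_{i}=(\cS_{\ell_i}\setminus\cR_{\ell_i})\cup\cR_{\ell_i}^{o}$ and $H_{j}$. Such an $m$ lies outside $\cD_{\eta}$, and for a line outside $\cD_{\eta}$ one has $m\in H_{i}$ if and only if either its extension $\overline{m}$ over $\F_{q^2}$ meets the transversal $\ell_i$ (equivalently $m\in\cS_{\ell_i}\setminus\cR_{\ell_i}$), or $\overline{m}$ lies in the opposite regulus of $\cR_{\ell_i}$ extended over $\F_{q^2}$ (equivalently $m\in\cR_{\ell_i}^{o}$). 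This gives four combinations, of which I would first eliminate the two mixed ones: every line of $\cR_{\ell_j}^{o}$ meets $r_{U_1}\cap\Sigma_{\eta}\in\cR_{\ell_j}$, whereas every line of $\cS_{\ell_i}\setminus\cR_{\ell_i}$ is skew to $r_{U_1}\cap\Sigma_{\eta}\in\cR_{\ell_i}\subseteq\cS_{\ell_i}$, so no line can be of both kinds. Only ``new--new'' and ``old--old'' overlaps survive. In the new--new case $\overline{m}$ lies in both extended opposite reguli; since each of these contains the two transversals $t_1,t_2$ of $\cD_{\eta}$, a third common line would force $\cR_{\ell_i}=\cR_{\ell_j}$, while $t_1,t_2$ meet $\Sigma_{\eta}$ in no line, so a new--new overlap occurs precisely when $\cR_{\ell_i}=\cR_{\ell_j}$, which by Proposition~\ref{mainp1} is controlled by \eqref{property1}. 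In the old--old case $m\in\cS_{\ell_i}\cap\cS_{\ell_j}$, so $\overline{m}$ is $\tau_{\eta}$-invariant (Lemma~\ref{lm:subspacesubgeo}) and meets $\ell_j$ in a point of $\overline{\cS}_{\ell_i}\setminus\overline{\cR}_{\ell_i}$; conversely any $Q\in\ell_j\cap(\overline{\cS}_{\ell_i}\setminus\overline{\cR}_{\ell_i})$ yields the common line $\langle Q,Q^{\tau_{\eta}}\rangle_{q^2}\cap\Sigma_{\eta}$. Thus an old--old overlap occurs precisely when $\ell_j\cap(\overline{\cS}_{\ell_i}\setminus\overline{\cR}_{\ell_i})\ne\emptyset$, which by Proposition~\ref{mainp2} is controlled by \eqref{property2}.

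Finally I would assemble the equivalence. For two lines from the same pair the clauses $(\alpha,u_i,v_i)=(\beta,u_j,v_j)$ in Propositions~\ref{mainp1} and~\ref{mainp2} make both overlaps empty, so same-pair Hall spreads are always skew; for lines from different pairs, skewness holds if and only if both \eqref{property1} and \eqref{property2} hold. This gives both directions at once: if $\cP$ is good the Hall spreads are pairwise skew and $\Pi_{\cP}$ is a parallelism, while if one of the two properties fails one exhibits an explicit common line. I expect the main obstacle to be twofold. First, cleanly justifying the dichotomy of overlap types, in particular the elimination of the mixed case via $r_{U_1}\cap\Sigma_{\eta}$ and the $t_1,t_2$ argument for new--new overlaps. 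Second, in the ``only if'' direction, converting a failure of \eqref{property1} into an \emph{actual} coincidence of reguli rather than merely equal ``slope'': here one uses that a single Baer subpencil already realizes all $q$ reguli of a fixed slope $u/v$ through $r_{U_1}\cap\Sigma_{\eta}$, so that two pairs with $u_iv_j=u_jv_i$ necessarily produce a shared regulus, hence a shared opposite regulus, and therefore a genuine overlap of the corresponding Hall spreads.
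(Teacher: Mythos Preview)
Your proposal is correct and follows essentially the same architecture as the paper's proof: reduce ``parallelism'' to pairwise disjointness, split a putative overlap of two Hall spreads into the four cases $(\cS\setminus\cR)\cap(\cS'\setminus\cR')$, $(\cS\setminus\cR)\cap\cR'^{o}$, $\cR^{o}\cap(\cS'\setminus\cR')$, $\cR^{o}\cap\cR'^{o}$, kill the mixed cases via $r_{U_1}\cap\Sigma_{\eta}$, and then invoke Propositions~\ref{mainp1} and~\ref{mainp2} for the two surviving cases. Your handling of $\cR_{\ell_i}^{o}\cap\cR_{\ell_j}^{o}$ via the common transversals $t_1,t_2$ in $\PG(3,q^2)$ is a clean variant of the paper's pigeonhole argument (the paper observes that a line of $\Sigma_{\eta}$ meeting all $2q+1$ spread lines of $\cR_{\ell_i}\cup\cR_{\ell_j}$ has only $q+1$ points), and you are right to flag that in the ``only if'' direction a failure of \eqref{property1} must be upgraded from ``same slope'' to an actual coincidence $\cR_{\ell_i}=\cR_{\ell_j}$ for some pair $\ell_i,\ell_j$; your slope-class count does this. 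The same caveat applies, symmetrically, to \eqref{property2}: Proposition~\ref{mainp2} as proved gives only the direction you need for ``$\cP$ good $\Rightarrow$ parallelism'', so for the converse you should similarly argue that a failure of \eqref{property2} forces, for \emph{some} choice of $\ell_i,\ell_j$ in the two offending pencils, a point of $\ell_j$ in $\overline{\cS}_{\ell_i}\setminus\overline{\cR}_{\ell_i}$ (or else a coincidence of Hall spreads, which already breaks the count $|\Pi_{\cP}|=q^2+q+1$).
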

\begin{proof}
Let $\ell \in \cL_{\cP}$. By Lemma~\ref{lemma3}, $\cS_\ell$ is a Desarguesian line-spread of $\Sigma_{\eta}$ with transversal lines $\ell$, $\ell^{\tau_{\eta}}$. Since $\cD_{\eta} \cap \cS_\ell$ is a regulus $\cR_\ell$ and $\cR_\ell^o$ is its opposite regulus, it follows that $\left(\cS_\ell \setminus \cR_{\ell}\right) \cup \cR_{\ell}^{o}$ is a subregular line-spread of $\Sigma_{\eta}$ of index one. Next we prove that the $q^2+q+1$ involved line-spreads are pairwise disjoint if and only if $\cP$ is a good set. By construction
\begin{align*}
\cD_{\eta} \cap  \left((\cS_\ell \setminus \cR_{\ell}) \cup \cR_{\ell}^o\right) = \emptyset.
\end{align*}
Let $\ell_i, \ell_j \in \cL_{\cP}$, $\ell_i \ne \ell_j$, where $\ell_{i} \in p\left( P_{\alpha_{i} u_i}, \pi_{\alpha_{i} v_i} \right)$ and $\ell_j \in p\left( P_{\alpha_{j} u_{j}}, \pi_{\alpha_{j} v_{j}} \right)$, for some \\ $\left( P_{\alpha_{i} u_i}, \pi_{\alpha_{i} v_i} \right), \left( P_{\alpha_{j} u_{j}}, \pi_{\alpha_{j} v_{j}} \right) \in \cP$. We claim that 
\begin{align*}
\cR_{\ell_{i}}^o \cap \cR_{\ell_j}^o = \emptyset.
\end{align*}
Note that, by Proposition~\ref{mainp1}, the reguli $\cR_{\ell_{i}}$, $\cR_{\ell_j}$ are distinct if and only if \eqref{property1} holds true. If there is a line of $\Sigma_{\eta}$ contained in $\cR_{\ell_i}^o \cap \cR_{\ell_j}^o$, then there exists a line of $\cR_{\ell_{k}}$ and a line of $\cR_{\ell_j}$ having precisely one point, not belonging to $r_{U_1}$, in common, where $\cR_{\ell_{i}}$, $\cR_{\ell_j}$ are distinct reguli contained in the line-spread $\cD_{\eta}$, a contradiction. Also,
\begin{align*}
\left(\cS_{\ell_i} \setminus \cR_{\ell_i} \right) \cap \cR_{\ell_j}^o = \emptyset.
\end{align*}
Indeed, $\cS_{\ell_i}$ is a line-spread of $\Sigma_{\eta}$ containing $r_{U_1}$. Hence no line of $\cS_{\ell_i} \setminus \cR_{\ell_i}$ intersects $r_{U_1}$, whereas each line of $\cR_{\ell_j}^o$ intersects $r_{U_1}$ since $r_{U_1} \in \cR_{\ell_j}$. Finally, we claim that 
\begin{align*}
\left( \cS_{\ell_i} \setminus \cR_{\ell_i} \right) \cap \left( \cS_{\ell_j} \setminus \cR_{\ell_j} \right) = \emptyset.
\end{align*}
To this end, it is enough to show that $\ell_j$ has no points in common with the union of the lines of $\cS_{\ell_i} \setminus \cR_{\ell_i}$ extended over $\F_{q^2}$, that is
\begin{align*}
& \ell_j \cap \left( \overline{\cS}_{\ell_i} \setminus \overline{\cR}_{\ell_i} \right) = \emptyset,
\end{align*}
%If $(\alpha_{i_{k'}}, u_{k'}, v_{k'}) = (\alpha_{i_k}, u_k, v_k)$, then $\ell_j \cap \left( \overline{\cS}_{\ell_i} \setminus \overline{\cR}_{\ell_i} \right) = \emptyset$, by Proposition~\ref{mainp2}. Otherwise, if $(\alpha_{i_{k'}}, u_{k'}, v_{k'}) \ne (\alpha_{i_k}, u_k, v_k)$ and $\ell_j \cap \left( \overline{\cS}_{\ell_i} \setminus \overline{\cR}_{\ell_i} \right) \ne \emptyset$, then by Proposition~\ref{mainp2}
%\begin{align*}
%& \alpha_{i_k} u_k \left(\alpha_{i_{k'}} v_{k'}\right)^q - \left(\alpha_{i_k} v_k\right)^q \alpha_{i_{k'}} u_{k'} = 0,  %\mbox{ where } (\alpha_{i_{k'}}, u_{k'}, v_{k'}) \ne (\alpha_{i_k}, u_k, v_k),
%& \left(P_{\alpha_{i_k} u_k}, \pi_{\alpha_{i_k} v_k}\right), \left(P_{\alpha_{i_{k'}} u_{k'}}, \pi_{\alpha_{i_{k'}} v_{k'}}\right) = \left(P_{\frac{u_i}{u}}, \pi_{u v_i}\right) \in \cP, & \mbox{ where } u \in \cU, u \ne 1,
%\end{align*}
%contradicting 
By Proposition~\ref{mainp2}, we have that $\ell_j \cap \left( \overline{\cS}_{\ell_i} \setminus \overline{\cR}_{\ell_i} \right) = \emptyset$ if and only if 
\eqref{property2} holds true. Therefore
\begin{align*}
& \left( \cS_{\ell_i} \setminus \cR_{\ell_i} \right) \cap \left( \cS_{\ell_j} \setminus \cR_{\ell_j} \right) = \emptyset, & \mbox{ for } \ell_i, \ell_j \in \cL_{\cP}, \ell_i \ne \ell_j
\end{align*}
if and only if 
\eqref{property2} holds true. The proof is now complete.
\end{proof}

\subsection{A characterization}

Denote by $E$ the group of projectivities represented by 
\begin{align*}
    & \begin{pmatrix}
        1 & b & 0 & 0 \\
        0 & 1 & 0 & 0 \\
        0 & 0 & 1 & b^q \\
        0 & 0 & 0 & 1
    \end{pmatrix}, & b \in \F_{q^2}. 
\end{align*}
Then $E$ is an elementary abelian group of order $q^2$. It stabilizes $t_1$, $t_2$, and each of the subgeometries $\Sigma_{\alpha}$, $\alpha \in \Lambda$. Moreover, $E$ fixes the line $r_{U_1}$ pointwise and leaves invariant each of the planes through $r_{U_1}$. Therefore, 
\begin{align*}
& \left(p\left(P_{\alpha_i u_i}, \pi_{\alpha_i v_i}\right) \setminus \left\{ r_{U_1} \right\} \right)^g = p\left(P_{\alpha_i u_i}, \pi_{\alpha_i v_i}\right) \setminus \left\{ r_{U_1} \right\}, & g \in E,  \\
& \implies \left(p\left(P_{\alpha_i u_i}, \pi_{\alpha_i v_i}\right) \setminus \left\{ r_{U_1} \right\} \right)^E = p\left(P_{\alpha_i u_i}, \pi_{\alpha_i v_i}\right) \setminus \left\{ r_{U_1} \right\}. & 
\end{align*}
In particular, it can be easily checked that
\begin{align*}
    \ell^E & = \left\{ \ell^g \colon g \in E \right\} & \\
    & = p\left(P_{\alpha_i u_i}, \pi_{\alpha_i v_i}\right) \setminus \left\{ r_{U_1} \right\}, & \mbox{ if } \ell \in p\left(P_{\alpha_i u_i}, \pi_{\alpha_i v_i}\right) \setminus \left\{r_{U_1}\right\}. 
\end{align*}

\begin{prop}
    A line-parallelism $\Pi_{\cP}$ of $\Sigma_{\eta}$ admits $E$ as an automorphism group.
\end{prop}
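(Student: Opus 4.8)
The plan is to show directly that each element $g \in E$ permutes the $q^2+q+1$ spreads comprising $\Pi_{\cP}$, so that $\Pi_{\cP}^{g} = \Pi_{\cP}$; since $g$ was arbitrary this yields $E \le \mathrm{Aut}(\Pi_{\cP})$. First I would dispose of the Desarguesian spread. As recorded above, $E$ stabilizes $\Sigma_{\eta}$ together with the transversal lines $t_1$, $t_2$ of $\cD_{\eta}$; since a Desarguesian line-spread of $\Sigma_{\eta}$ is uniquely determined by its pair of transversals, the image $\cD_{\eta}^{g}$ is the Desarguesian spread of $\Sigma_{\eta}$ with transversals $t_1^{g}=t_1$ and $t_2^{g}=t_2$, whence $\cD_{\eta}^{g}=\cD_{\eta}$.

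Next I would treat the Hall spreads. Fix $\ell \in \cL_{\cP}$, say $\ell \in p\left(P_{\alpha_i u_i}, \pi_{\alpha_i v_i}\right)\setminus\{r_{U_1}\}$ with $\left(P_{\alpha_i u_i}, \pi_{\alpha_i v_i}\right)\in\cP$. By the observation recorded just before the statement, $\ell^{g}$ again lies in $p\left(P_{\alpha_i u_i}, \pi_{\alpha_i v_i}\right)\setminus\{r_{U_1}\}$; in particular $\ell^{g}\in\cL_{\cP}$. Because $g$ fixes $\Sigma_{\eta}$, the same reasoning employed in the proof of Proposition~\ref{mainp2} shows that $g$ carries the Desarguesian spread $\cS_{\ell}$, with transversals $\ell$, $\ell^{\tau_{\eta}}$, to the Desarguesian spread $\cS_{\ell^{g}}$ with transversals $\ell^{g}$, $\left(\ell^{g}\right)^{\tau_{\eta}}$. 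Consequently $g$ maps the regulus $\cR_{\ell}=\cD_{\eta}\cap\cS_{\ell}$ to $\cD_{\eta}^{g}\cap\cS_{\ell}^{g}=\cD_{\eta}\cap\cS_{\ell^{g}}=\cR_{\ell^{g}}$ and the opposite regulus $\cR_{\ell}^{o}$ to $\cR_{\ell^{g}}^{o}$. Hence the Hall spread $\left(\cS_{\ell}\setminus\cR_{\ell}\right)\cup\cR_{\ell}^{o}$ is sent to $\left(\cS_{\ell^{g}}\setminus\cR_{\ell^{g}}\right)\cup\cR_{\ell^{g}}^{o}$, which is a member of $\Pi_{\cP}$ precisely because $\ell^{g}\in\cL_{\cP}$.

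Combining the two steps, $g$ maps every spread of $\Pi_{\cP}$ to a spread of $\Pi_{\cP}$; as $g$ is a bijection and $\Pi_{\cP}$ is finite, $\Pi_{\cP}^{g}=\Pi_{\cP}$. The point I would flag as requiring care, rather than a genuine obstacle, is that $E$ does \emph{not} in general fix the individual Hall spreads: when $\ell^{g}\neq\ell$ lie in one common Baer subpencil, Proposition~\ref{mainp1} gives $\cR_{\ell}\neq\cR_{\ell^{g}}$, so $g$ actually permutes (nontrivially) the $q$ Hall spreads attached to each pair of $\cP$. The argument above is arranged so that this permutation never leaves $\Pi_{\cP}$; everything else reduces to two facts already in hand, namely that a collineation fixing $\Sigma_{\eta}$ preserves the Desarguesian-spread and regulus structure, and that $\ell^{E}$ stays inside $\cL_{\cP}$.
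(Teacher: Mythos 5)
Your proof is correct and follows essentially the same route as the paper: the paper's argument is just the compressed version, noting that $\cL_{\cP}^E=\cL_{\cP}$ (via the observation that each pencil $p\left(P_{\alpha_i u_i},\pi_{\alpha_i v_i}\right)\setminus\{r_{U_1}\}$ is an $E$-orbit) and then that $\Pi_{\cP}^E$ is the set of Hall spreads indexed by $\cL_{\cP}^E$ together with $\cD_{\eta}$. The extra steps you make explicit — that $g$ fixes $\cD_{\eta}$ and carries $\cS_{\ell}$, $\cR_{\ell}$, $\cR_{\ell}^{o}$ to $\cS_{\ell^{g}}$, $\cR_{\ell^{g}}$, $\cR_{\ell^{g}}^{o}$ — are exactly what the paper leaves implicit.
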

\begin{proof}
    Recall that for a good set $\cP$, the line set $\cL_{\cP}$ is defined as follows
    \begin{align*}
    \cL_{\cP} = \left\{p\left(P_{\alpha_{i} u_i}, \pi_{\alpha_{i} v_i}\right) \setminus \{r_{U_1}\} \mid \left(P_{\alpha_{i} u_i}, \pi_{\alpha_{i} v_i}\right) \in \cP \right\}.
    \end{align*}
Hence 
\begin{align*}
    \cL_{\cP}^E & = \left\{\left(p\left(P_{\alpha_{i} u_i}, \pi_{\alpha_{i} v_i}\right) \setminus \{r_{U_1}\}\right)^E \mid \left(P_{\alpha_{i} u_i}, \pi_{\alpha_{i} v_i}\right) \in \cP \right\} \\
    & = \left\{p\left(P_{\alpha_{i} u_i}, \pi_{\alpha_{i} v_i}\right) \setminus \{r_{U_1}\} \mid \left(P_{\alpha_{i} u_i}, \pi_{\alpha_{i} v_i}\right) \in \cP \right\} \\
    & = \cL_{\cP}
\end{align*}
and 
\begin{align*}
\Pi_{\cP}^E & = \left\{ \left(\cS_\ell \setminus \cR_{\ell}\right) \cup {\cR_{\ell}^o} \mid \ell \in \cL_{\cP}^E \right\} \cup \left\{\cD_{\eta}\right\} \\
& = \left\{ \left(\cS_\ell \setminus \cR_{\ell}\right) \cup {\cR_{\ell}^o} \mid \ell \in \cL_{\cP} \right\} \cup \left\{\cD_{\eta}\right\} \\
& = \Pi_{\cP},
\end{align*}
as required.
\end{proof}
\begin{theorem}
Let $\Pi$ be a line-parallelism of $\Sigma_{\eta}$ comprising the Desarguesian spread $\cD_{\eta}$ and
$q^2 + q$ Hall spreads, which are obtained by switching the $q^2 + q$ reguli through
the fixed line $r_{U_1} \cap \Sigma_{\eta}$ of $\cD_{\eta}$. If $\Pi$ is left invariant by $E$, then there exists a good set $\cP$ such that $\Pi = \Pi_{\cP}$.
\end{theorem}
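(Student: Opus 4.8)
The plan is to reconstruct, from each Hall spread of $\Pi$, a line of $\cL$, and then to exploit the $E$-invariance to organise these lines into complete orbits, which are exactly the punctured pencils $p(P_{\alpha u},\pi_{\alpha v})\setminus\{r_{U_1}\}$. For $\ell\in\cL$ write $H_\ell=(\cS_\ell\setminus\cR_\ell)\cup\cR_\ell^o$ for the associated Hall spread. First I would record that, since $\Pi$ is a parallelism, its spreads are pairwise disjoint; hence if two Hall spreads of $\Pi$ shared the same switched regulus $\cR$ they would share the whole opposite regulus $\cR^o$, a contradiction. Thus the $q^2+q$ Hall spreads of $\Pi$ are switched along $q^2+q$ pairwise distinct reguli through $r_{U_1}\cap\Sigma_\eta$, that is, along every such regulus exactly once.

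Next, fix a Hall spread $H\in\Pi$ with $H\ne\cD_\eta$. By hypothesis $H=(\cS\setminus\cR)\cup\cR^o$ with $\cS$ Desarguesian and $\cR$ a regulus through $r_{U_1}\cap\Sigma_\eta$; disjointness of $H$ from $\cD_\eta$ forces $\cD_\eta\cap\cS=\cR$. Lemma~\ref{lemma3} then yields a line $\ell\in\cL$ with $\cS=\cS_\ell$ and $\cR=\cR_\ell$, so that $H=H_\ell$. Set $\cF=\{\ell\in\cL : H_\ell\in\Pi\}$; by the previous paragraph $\ell\mapsto\cR_\ell$ maps $\cF$ onto the full set of reguli through $r_{U_1}\cap\Sigma_\eta$. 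Since each $g\in E$ fixes $\Sigma_\eta$ and $r_{U_1}$ and stabilises $\cD_\eta$, it sends $\cS_\ell$ to $\cS_{\ell^g}$, and hence $H_\ell$ to $H_{\ell^g}$; as $\Pi^E=\Pi$ this shows $\cF$ is $E$-invariant, so $\cF$ is a union of the $E$-orbits $p(P_{\alpha u},\pi_{\alpha v})\setminus\{r_{U_1}\}$ recalled above.

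Within a single punctured pencil the $q$ lines give $q$ distinct reguli by Proposition~\ref{mainp1} (the case of equal triples). Across two distinct pencils contained in $\cF$, a coincidence $\cR_{\ell_i}=\cR_{\ell_j}$ would, by the distinctness of the reguli used in $\Pi$, force $H_{\ell_i}=H_{\ell_j}$, i.e.\ $\cS_{\ell_i}=\cS_{\ell_j}$ and therefore $\ell_j=\ell_i^{\tau_\eta}$; this is possible only when $\alpha_i^{q+1}=-1$. I expect this configuration to be the main obstacle, since it is the one point where the correspondence between Hall spreads and lines of $\cL$ is two-to-one rather than bijective. A direct computation (using Lemma~\ref{lemmasub} together with $\Sigma_\alpha^{\tau_\eta}=\Sigma_\alpha$ when $\alpha^{q+1}=-1$) shows that $\tau_\eta$ carries each such pencil onto another pencil of the same type; combined with $\cR_\ell=\cR_{\ell^{\tau_\eta}}$, this means the two pencils carry identical families of reguli, and hence of Hall spreads. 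Consequently the pencils contained in $\cF$ split into pencils whose reguli-families are pairwise disjoint and $\tau_\eta$-paired pencils that duplicate one another; choosing one pencil from each pair and keeping all unpaired ones produces a collection of pencils whose reguli-families are pairwise disjoint, each of size $q$, and jointly exhaust all $q^2+q$ reguli, so their number is exactly $q+1$. Let $\cP$ be the corresponding set of $q+1$ pairs $(P_{\alpha_i u_i},\pi_{\alpha_i v_i})$.

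Finally I would verify that $\Pi=\Pi_\cP$. By construction $\cL_\cP$ is the union of the chosen $q+1$ pencils, each contained in $\cF$, so $H_\ell\in\Pi$ for every $\ell\in\cL_\cP$; conversely every $H\in\Pi\setminus\{\cD_\eta\}$ has its switched regulus equal to some $\cR_{\ell'}$ with $\ell'\in\cL_\cP$, whence $H=H_{\ell'}$ by the uniqueness of the Hall spread switched along a given regulus. Thus $\Pi_\cP=\{H_\ell : \ell\in\cL_\cP\}\cup\{\cD_\eta\}=\Pi$. Since $\Pi=\Pi_\cP$ is a parallelism, Theorem~\ref{main} forces $\cP$ to be a good set, which completes the argument.
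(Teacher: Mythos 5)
Your proposal is correct and follows essentially the same route as the paper: use Lemma~\ref{lemma3} to attach a line of $\cL$ to each Hall spread, use $E$-invariance to see that the relevant lines form complete punctured pencils $p(P_{\alpha u},\pi_{\alpha v})\setminus\{r_{U_1}\}$, count $q^2+q=(q+1)q$ to obtain $q+1$ pencils and hence a set $\cP$ with $\Pi=\Pi_{\cP}$, and invoke Theorem~\ref{main} to conclude that $\cP$ is good. Your treatment is in fact more careful than the paper's rather terse argument, since you explicitly verify that distinct switched reguli give distinct Hall spreads and you handle the two-to-one coincidence of $\tau_{\eta}$-paired pencils arising when $\alpha^{q+1}=-1$.
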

\begin{proof}
A Hall spread of $\Pi$ must be of type $\left(\cS_{\ell} \setminus \cR_{\ell}\right) \cup \cR_{\ell}^o$, where $\cR_{\ell}$ is a regulus of $\cD_{\eta}$ through the line $r_{U_1} \cap \Sigma_{\eta}$. By Lemma~\ref{lemma3}, 
we see that at least one among $\ell$ and $\ell^{\tau_\eta}$ belongs to $\cL$. Assume that $\ell \in \cL$, with $\ell \cap r_{U_1} = \{P_{\alpha_i u_i}\}$ and $\ell \subset \pi_{\alpha_i v_i}$. Then
\begin{align*}
    & \left(\cS_{\ell} \setminus \cR_{\ell}\right) \cup \cR_{\ell}^o \in \Pi, & \forall \ell \in p\left( P_{\alpha_i u_i}, \pi_{\alpha_i v_i} \right) \setminus \left\{r_{U_1}\right\},
\end{align*}
due to the fact that $\Pi$ is left invariant by $E$. Since $\left|p\left( P_{\alpha_i u_i}, \pi_{\alpha_i v_i} \right) \setminus \left\{r_{U_1}\right\}\right| = q$ and $\Pi$ contains $q^2+q$ Hall spreads, there is a set consisting of $q+1$ pairs:
\begin{align*}
& \cP = \left\{\left(P_{\alpha_{1} u_1}, \pi_{\alpha_{1} v_1}\right), \dots, \left(P_{\alpha_{{q+1}} u_{q+1}}, \pi_{\alpha_{{q+1}} v_{q+1}}\right)\right\}, & \alpha_{i} \in \cI, u_i, v_i \in \cU, 
\end{align*}
and a lineset
\begin{align*}
\cL_{\cP} = \left\{p\left(P_{\alpha_{i} u_i}, \pi_{\alpha_{i} v_i}\right) \setminus \{r_{U_1}\} \mid \left(P_{\alpha_{i} u_i}, \pi_{\alpha_{i} v_i}\right) \in \cP \right\},
\end{align*}
such that 
\begin{align*}
\Pi = \left\{ \left(\cS_\ell \setminus \cR_{\ell}\right) \cup {\cR_{\ell}^o} \mid \ell \in \cL_{\cP} \right\} \cup \left\{\cD_{\eta}\right\}.  
\end{align*}
The proof now follows from Theorem~\ref{main}.
\end{proof}

\section{The isomorphism issue}

Let $\cJ$ be the stabilizer in $\PGaL(4, q^2)$ of $\Sigma_{\eta}$. Two line-parallelisms $\Pi$ and $\Pi'$ of $\Sigma_{\eta}$ are said to be {\em equivalent} or {\em isomorphic} if there exists a collineation of $\cJ$ mapping $\Pi$ to $\Pi'$. In Theorem~\ref{main}, we have shown that given a good set $\cP$, it is possible to define a line set $\cL_{\cP}$ which in turn gives rise to the line-parallelism $\Pi_{\cP}$ of $\Sigma_{\eta}$. Of course, $\Pi_{\cP}$, $\Pi_{\cP'}$ may be equivalent, even if $\cP$ and $\cP'$ are distinct good sets. Here, we aim to obtain a bound on the number of non-equivalent line-parallelisms that can be constructed from Theorem~\ref{main}.

\subsection{The number of {\em good sets}}

Consider the following point sets of $\PG(2, q^2)$:
\begin{align*}
    & \cZ_{\alpha} = \left\{ (1, \alpha u, \alpha v) \mid u, v \in \cU \right\}, & \alpha \in \Lambda, \\
    & \cZ = \bigcup_{\alpha \in \cI} \cZ_{\alpha}.
\end{align*}
We note that each $\cZ_{\alpha}$ consists of $(q+1)^2$ points. Moreover, $\mathcal{Z}_{\alpha} \cap \mathcal{Z}_{\beta}=\emptyset$, when $\alpha \neq \beta$, and hence $|\cZ| = |\cI| (q+1)^2$. Let $\eps$ be the map sending a set 
\begin{align*}
    \cP = \left\{\left(P_{\alpha_{1} u_1}, \pi_{\alpha_{1} v_1}\right), \dots, \left(P_{\alpha_{{q+1}} u_{q+1}}, \pi_{\alpha_{{q+1}} v_{q+1}}\right)\right\}
\end{align*} 
consisting of $q+1$ pairs, where $\alpha_{i} \in \cI$, $u_i, v_i \in \cU$, to the subset of size $q+1$ of $\cZ$ given by:
\begin{align*}
    \eps({\cP}) = \left\{ (1, \alpha_{i} u_i, \alpha_{i} v_i) \mid \left(P_{\alpha_{i} u_i}, \pi_{\alpha_{i} v_i}\right) \in \cP \right\} \subset \cZ.
\end{align*}

Consider the following lines and conics  of $\PG(2, q^2)$:
\begin{align*}
    & s_c: X_2 = c X_3, & c \in \cU \\
    & C_{\alpha b}: \alpha^{q+1} b X_1^2 - X_2 X_3 = 0, & \alpha \in \Lambda, b \in \cU.
\end{align*}
Define the set $\cY$ of lines and the set $\cC_b$ of union of conics as follows
\begin{align*}
    & \cY = \left\{s_c \mid c \in \cU \right\}, & \\
    & \cC_{b} = \bigcup_{\alpha \in \cI} C_{\alpha b}, & b \in \cU.
\end{align*}
Observe that 
\begin{equation} \label{eq:belonglineconics}
    \left(1, \alpha u, \alpha v\right) \in s_{c} \cap C_{\alpha b},  \mbox{ where } c = \frac{u}{v}, b = \frac{\alpha u}{\alpha^q v^q}.
\end{equation}
Hence, for $\alpha \in \cI$, it can be easily checked that each of the $q+1$ members of $\cY$ or of $\left\{C_{\alpha b} \mid b \in \cU \right\}$ contains $q+1$ points of $\cZ_{\alpha}$, so that 
\begin{align*}
    & \left\{s_c \cap \cZ_{\alpha} \mid c \in \cU\right\}, \\
    & \left\{C_{\alpha b} \cap \cZ_{\alpha} \mid b \in \cU\right\}, 
\end{align*}
both are partitions of $\cZ_{\alpha}$.

The algebraic conditions that define a good set $\mathcal{P}$ can be interpreted geometrically on the associated point set $\eps(\mathcal{P})$ as follows.

\begin{prop} \label{prop3}
Let $\cP = \left\{\left(P_{\alpha_{1} u_1}, \pi_{\alpha_{1} v_1}\right), \dots, \left(P_{\alpha_{{q+1}} u_{q+1}}, \pi_{\alpha_{{q+1}} v_{q+1}}\right)\right\}$ be a set consisting of $q+1$ pairs, where $\alpha_{i} \in \cI$, $u_i, v_i \in \cU$. Then $\cP$ is a good set if and only if 
\begin{align}
    & |s_c \cap \eps(\cP)| = 1, & \forall c \in \cU, \label{prope1} \\
    & |\cC_{b} \cap \eps(\cP)| = 1, & \forall b \in \cU. \label{prope2}
\end{align}
\end{prop}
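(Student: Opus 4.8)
The plan is to translate the two algebraic conditions \eqref{property1} and \eqref{property2} defining a good set into transversality conditions for two partitions of $\cZ$: the partition induced by the lines $s_c$ and the one induced by the conic-unions $\cC_b$. The key structural fact to set up first is that both families are genuine partitions of $\cZ$ into $q+1$ classes.

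First I would establish these two partitions. Using \eqref{eq:belonglineconics} together with $u^{q}=u^{-1}$, $v^{q}=v^{-1}$ for $u,v\in\cU$, every point $(1,\alpha u,\alpha v)\in\cZ_\alpha$ lies on the unique line $s_c$ with $c=u/v\in\cU$, so $\{s_c:c\in\cU\}$ partitions $\cZ$ into $q+1$ classes. For the conics one checks that $(1,\alpha u,\alpha v)\in\cZ_\alpha$ can lie on $C_{\alpha' b}$ with $\alpha'\in\cI$, $b\in\cU$ only when $\alpha'=\alpha$: membership forces $b=\alpha^2 uv/(\alpha')^{q+1}$, whence $b^{q+1}=(\alpha^{q+1}/(\alpha')^{q+1})^2$, so $b\in\cU$ requires $(\alpha')^{q+1}=\pm\alpha^{q+1}$. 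The distinct norms in $\Lambda$ rule out the $+$ case for $\alpha'\ne\alpha$, while the $-$ case is excluded by \eqref{eq:oppositenonbelong} when $q$ is odd (and for $q$ even the two cases coincide). Hence each point of $\cZ$ lies on exactly one member $\cC_b$, so $\{\cC_b:b\in\cU\}$ is a second partition of $\cZ$ into $q+1$ classes.

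Next I would record the dictionary between triples and incidences. Since the $q+1$ pairs of $\cP$ are distinct, so are the triples $(\alpha_i,u_i,v_i)$; as the norm $\alpha_i^{q+1}$ recovers $\alpha_i$ and then $u_i,v_i$, the map $\eps$ is injective and $\eps(\cP)$ consists of exactly $q+1$ points of $\cZ$. Writing $c_i=u_i/v_i$ and $b_i=\alpha_i u_i/(\alpha_i v_i)^q$ for the line- and conic-indices of the point $(1,\alpha_i u_i,\alpha_i v_i)$, a direct computation gives $c_i\ne c_j\iff u_iv_j-v_iu_j\ne 0$ and $b_i=b_j\iff\alpha_i u_i(\alpha_j v_j)^q=(\alpha_i v_i)^q\alpha_j u_j$. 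Thus \eqref{property1} says precisely that distinct points of $\eps(\cP)$ have distinct line-indices, and \eqref{property2} says precisely that they have distinct conic-indices.

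Finally I would close by counting. Because $\{s_c\}$ partitions $\cZ$ into $q+1$ classes and $|\eps(\cP)|=q+1$, the $c_i$ are pairwise distinct iff each class $s_c$ meets $\eps(\cP)$ in exactly one point (injectivity of point-to-class plus pigeonhole forces surjectivity); hence \eqref{property1}$\iff$\eqref{prope1}. The identical argument applied to the partition $\{\cC_b\}$ gives \eqref{property2}$\iff$\eqref{prope2}, and combining these yields that $\cP$ is good iff both \eqref{prope1} and \eqref{prope2} hold. The only genuinely delicate step is the verification in the second paragraph that a point of $\cZ_\alpha$ meets no conic $C_{\alpha' b}$ with $\alpha'\in\cI\setminus\{\alpha\}$ and $b\in\cU$; this is exactly where the hypotheses on $\Lambda$ (distinct norms) and on $\cI$ (equation \eqref{eq:oppositenonbelong}) are needed, and it is what makes $\{\cC_b\}$ an honest partition rather than merely a cover.
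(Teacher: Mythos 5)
Your proposal is correct and takes essentially the same route as the paper's proof: both identify the unique line $s_{u_i/v_i}$ and conic-union $\cC_{b_i}$ through each point of $\eps(\cP)$, use the norm computation $\beta^{2(q+1)}=\alpha_j^{2(q+1)}$ together with the distinct-norm hypothesis on $\Lambda$ and \eqref{eq:oppositenonbelong} to force $\beta=\alpha_j$, and then convert pairwise-distinctness of the indices into the ``exactly one point per class'' conditions \eqref{prope1}--\eqref{prope2} by the counting identity $\sum_c|s_c\cap\eps(\cP)|=\sum_b|\cC_b\cap\eps(\cP)|=q+1$. Your explicit verification that $\{\cC_b\}_{b\in\cU}$ partitions $\cZ$ is the same fact the paper uses implicitly in that summation, so the two arguments coincide in substance.
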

\begin{proof}
By \eqref{eq:belonglineconics}, we know that the point $(1, \alpha_{i} u_i, \alpha_{i} v_i)$ of $\eps(\cP)$ belongs to the line $s_{\frac{u_i}{v_i}}$ of $\cY$ and to the conic $C_{\alpha_{i} b} \subset \cC_b$, where 
\begin{equation} \label{eq:expressionbconic}
b = \frac{\alpha_{i} u_i}{\alpha_{i}^q v_i^q}.
\end{equation}
Now, let $(1, \alpha_{j} u_{j}, \alpha_{j} v_{j})$ a further point of $\eps(\cP)$, i.e., $\left(\alpha_{i}, u_i, v_i\right) \ne \left( \alpha_{j}, u_{j}, v_{j} \right)$. Then $(1, \alpha_{j} u_{j}, \alpha_{j} v_{j})$ lies in $s_{\frac{u_i}{v_i}}$ if and only if $u_i v_{j} - u_{j} v_i = 0$. Analogously, the point $(1, \alpha_{j} u_{j}, \alpha_{j} v_{j})$ lies in $C_{\beta b} \subset \cC_b$ if and only if
\[
\beta^{q+1} b - \alpha_j^2(u_j v_j) = 0.
\]
Raising both sides to the power $q+1$, we obtain
\[
\beta^{2(q+1)} = \alpha_j^{2(q+1)}.
\]
Since $\alpha_j \in \mathcal{I}$ and considering \eqref{eq:oppositenonbelong}, this implies $\beta = \alpha_j$. Thus, taking into account \eqref{eq:expressionbconic}, the point $(1, \alpha_{j} u_{j}, \alpha_{j} v_{j}) \in C_{\beta b}$ if and only if $\beta = \alpha_j$ and 
\[
\alpha_{i} u_i (\alpha_{j} v_{j})^q - (\alpha_{i} v_i)^q \alpha_{j} u_{j} = 0.
\]
On the other hand, we have 
    \begin{align*}
        \sum_{c \in \cU} \lvert s_c \cap \eps(\mathcal{P})\rvert = \lvert \eps(\mathcal{P})\rvert =q+1,
    \end{align*}
and 
    \begin{align*}
        \sum_{b \in \cU} \lvert \cC_b \cap \eps(\mathcal{P})\rvert = \lvert \eps(\mathcal{P})\rvert =q+1.
    \end{align*}
    Hence, $\cP$ satisfies \eqref{property1} and \eqref{property2} if and only if $|s_c \cap \eps(\cP)| = |\cC_b \cap \eps(\cP)| = 1$, $\forall c,b \in \cU$.
\end{proof}

Therefore, $\eps$ induces a bijection between good sets and subsets of $\cZ$ of size $q+1$ satisfying \eqref{prope1}, \eqref{prope2}. We now describe the intersection behavior of a line $s_c$ with a conic $\cC_{\alpha b}$ in relation to the $\cZ_{\beta}$'s. 

\begin{prop} \label{prop:geometricintersectiongood}
    Let $s_c$ be a line of $\cY$ and $C_{\alpha b}$ a conic contained in $\cC_b$. The size of the intersection $|s_c \cap C_{\alpha b} \cap \mathcal{Z}_\beta|$, with $\beta \in \cI$, is as follows. If $q$ is even:
    \begin{itemize}
        \item When $\alpha=\beta$, \begin{align*}
        |s_c \cap C_{\alpha b} \cap \cZ_{\alpha}| = 1.
    \end{align*}
    \item When $\alpha \neq \beta$, \begin{align*}
        |s_c \cap C_{\alpha b} \cap \cZ_{\beta}| = 0.
    \end{align*}
    \end{itemize}
%\begin{align*}
%        |s_a \cap C_{\alpha b} \cap \cZ_{\beta}| = \begin{cases}
 %           1 & \mbox{ if } \alpha = \beta, \\
%            0 & \mbox{ if } %\alpha \neq \beta.
%        \end{cases}
%    \end{align*}
If $q$ is odd:
\begin{itemize}
    \item When $\alpha=\beta$,
\begin{align*}
        |s_c \cap C_{\alpha b} \cap \cZ_{\alpha}| = \begin{cases}
            2 & \mbox{ if } \alpha^{q+1} \mbox{ is a square in } \F_{q} \mbox{ and } (cb)^{\frac{q+1}{2}}=1, \\
            2 & \mbox{ if } \alpha^{q+1} \mbox{ is not a square in } \F_{q} \mbox{ and } (cb)^{\frac{q+1}{2}}=-1, \\
            0 & \mbox{ otherwise},
        \end{cases}
    \end{align*}
%\item When $\alpha^{q+1}=-\beta^{q+1}$,     
%    \begin{align*}
 %       |s_c \cap C_{\alpha b} \cap \cZ_{\beta}| = \begin{cases}
 %           2 & \mbox{ if } \alpha^{q+1} \mbox{ is not a square in } \F_{q} \mbox{ and } (cb)^{\frac{q+1}{2}}=1, \\
  %          2 & \mbox{ if } \alpha^{q+1} \mbox{ is a square in } \F_{q} \mbox{ and } (cb)^{\frac{q+1}{2}}=-1, \\
  %          0 & \mbox{ otherwise},
 %       \end{cases}
 %   \end{align*}
    \item When $\alpha \neq \beta$,
    \begin{align*}
        |s_c \cap C_{\alpha b} \cap \cZ_{\beta}| = 0. 
    \end{align*}
\end{itemize}
\end{prop}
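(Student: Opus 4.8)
The plan is to reduce the whole statement to a root-counting problem inside the cyclic group $\cU$ of $(q+1)$-th roots of unity. A point of $\cZ_\beta$ has the form $(1, \beta u, \beta v)$ with $u, v \in \cU$. Imposing membership in the line $s_c : X_2 = c X_3$ gives $u = cv$, while imposing membership in the conic $C_{\alpha b} : \alpha^{q+1} b X_1^2 - X_2 X_3 = 0$ gives $\alpha^{q+1} b = \beta^2 u v$. Combining the two, a point of $s_c \cap C_{\alpha b} \cap \cZ_\beta$ corresponds exactly to a solution $v \in \cU$ of
\[
v^2 = \frac{\alpha^{q+1} b}{\beta^2 c},
\]
with $u := cv$ automatically in $\cU$ since $c, v \in \cU$; distinct $v$ give distinct points. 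Hence $|s_c \cap C_{\alpha b} \cap \cZ_\beta|$ is the number of square roots in $\cU$ of the right-hand side.

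First I would record the necessary norm condition. Raising the displayed equation to the power $q+1$ and using $b^{q+1} = c^{q+1} = 1$ together with $\alpha^{q+1}, \beta^{q+1} \in \F_q$ (so that their $(q+1)$-th powers are their squares), the norm of the right-hand side equals $\bigl(\alpha^{q+1}/\beta^{q+1}\bigr)^2$. Since every $v \in \cU$ satisfies $(v^2)^{q+1} = 1$, a solution can exist only when $\alpha^{q+1} = \pm\,\beta^{q+1}$. For $\alpha \ne \beta$ this is impossible: $\alpha^{q+1} = \beta^{q+1}$ would force $\alpha = \beta$, as $\Lambda$ was chosen with pairwise distinct norms, while $\alpha^{q+1} = -\beta^{q+1}$ is excluded for $q$ odd by \eqref{eq:oppositenonbelong} and collapses to the previous case for $q$ even. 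This yields $|s_c \cap C_{\alpha b} \cap \cZ_\beta| = 0$ for $\alpha \ne \beta$ in both parities.

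It remains to treat $\alpha = \beta$, where the equation becomes $v^2 = w$ with $w := \alpha^{q-1} b c^{-1}$; one checks $w \in \cU$, since $(\alpha^{q-1})^{q+1} = \alpha^{q^2-1} = 1$ and $b, c \in \cU$. So I count square roots of $w$ in the cyclic group $\cU$ of order $q+1$. If $q$ is even, $q+1$ is odd, squaring is a bijection of $\cU$, and there is exactly one root, giving $1$. If $q$ is odd, squaring is two-to-one onto the index-two subgroup of squares, so there are $2$ roots when $w$ is a square in $\cU$ and none otherwise, and $w$ is a square precisely when $w^{(q+1)/2} = 1$.

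The last step, which is the main technical point, is to rewrite $w^{(q+1)/2} = 1$ in the stated form. I would compute
\[
w^{(q+1)/2} = \alpha^{(q^2-1)/2}\,(b/c)^{(q+1)/2} = \alpha^{(q^2-1)/2}\,(cb)^{(q+1)/2},
\]
where the last equality uses that $c^{(q+1)/2} = \pm 1$ is its own inverse. Two observations then finish: the factor $\alpha^{(q^2-1)/2} \in \{\pm 1\}$ equals $1$ exactly when $\alpha$ is a square in $\F_{q^2}$, and since $(\alpha^{q+1})^{(q-1)/2} = \alpha^{(q^2-1)/2}$, this happens precisely when the norm $\alpha^{q+1}$ is a square in $\F_q$; similarly $(cb)^{(q+1)/2} \in \{\pm 1\}$. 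Thus $w^{(q+1)/2} = 1$ holds exactly in the two listed regimes — $\alpha^{q+1}$ a square with $(cb)^{(q+1)/2} = 1$, or $\alpha^{q+1}$ a nonsquare with $(cb)^{(q+1)/2} = -1$ — giving $2$ there and $0$ otherwise. I expect the delicate bookkeeping to be exactly this passage between squareness in $\cU$, in $\F_{q^2}$, and in $\F_q$, together with the harmless replacement of $b/c$ by $cb$.
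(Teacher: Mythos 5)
Your proof is correct and follows essentially the same route as the paper's: substitute the line equation into the conic equation, raise to the power $q+1$ to rule out $\alpha \ne \beta$ via the distinct-norm choice of $\Lambda$ together with \eqref{eq:oppositenonbelong}, and then count solutions of $v^2 = \alpha^{q-1}b/c$. The only cosmetic difference is that you count square roots inside the cyclic group $\cU$ via the criterion $w^{(q+1)/2}=1$, whereas the paper writes the two intersection points of $s_c$ and $C_{\alpha b}$ over $\F_{q^2}$ explicitly and tests their membership in $\cZ_\alpha$ by computing norms; the underlying computation is identical.
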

\begin{proof}
Let us start by considering the intersection $C_{\alpha b} \cap \mathcal{Z}_\beta$.
A point $(1, \beta u, \beta v) \in \mathcal{Z}_\beta$ lies on the conic $C_{\alpha b}$ if and only if
\begin{equation} \label{eq:equationintersectionconicssubgeo}
    \alpha^{q+1}b-\beta^2uv=0.
\end{equation}
Raising both sides to the power of $q+1$, we obtain
\begin{equation} \label{eq:raisingequationintersectionconicssubgeo}
    \alpha^{2(q+1)}=\beta^{2(q+1)}.
\end{equation}
Recall that $\alpha$ and $\beta$ belong to $\Lambda$, and hence either they are equal or have distinct norms. We now distinguish two cases based on the parity of $q$.

\textbf{Case 1: $q$ even}. 
From \eqref{eq:raisingequationintersectionconicssubgeo}, it follows that $\alpha = \beta$. Therefore, if $\alpha \ne \beta$, we conclude that $C_{\alpha b} \cap \cZ_{\beta}= \emptyset$ and thus
\[
\lvert s_c \cap C_{\alpha b} \cap \cZ_{\beta} \rvert=0.
\]
Now suppose $\alpha = \beta$. In this case, the point $(1, \alpha u, \alpha v) \in \mathcal{Z}_\alpha$ lies on the line $s_c$ if and only if $c = u/v$, which, combined with \eqref{eq:equationintersectionconicssubgeo}, implies that
\[
v^2=\frac{\alpha^{q-1}b}c.
\]
Since $q$ is even, the equation $x^2 = \alpha^{q-1}b/c$ has a unique solution in the field $\F_{q^2}$.
Hence, there exists a unique $v$, and consequently a unique $u$, such that the point $(1, \alpha u, \alpha v)$ lies in the intersection $s_c \cap C_{\alpha b}$.
Therefore, in this case, the intersection $s_c \cap C_{\alpha b} \cap \mathcal{Z}_\alpha$ consists of exactly one point. 

\textbf{Case 2: $q$ odd}. In this case, from \eqref{eq:raisingequationintersectionconicssubgeo}, we have that either $\alpha^{q+1} = \beta^{q+1}$ or $\alpha^{q+1} = -\beta^{q+1}$. Note that since $\alpha, \beta \in \mathcal{I}$, by \eqref{eq:oppositenonbelong}, the equality $\alpha^{q+1} = -\beta^{q+1}$ cannot occur. Therefore, we have $\alpha^{q+1} = \beta^{q+1}$, which implies $\alpha = \beta$.
 Therefore, it follows that $\alpha = \beta$. Hence, if $\alpha \ne \beta$, we conclude that $C_{\alpha b} \cap \mathcal{Z}_\beta = \emptyset$, and thus
\[
\lvert s_c \cap C_{\alpha b} \cap \mathcal{Z}_\beta \rvert = 0.
\]
So, assume $\alpha=\beta$. We now consider the intersection of the conic $C_{\alpha b}$ with the line $s_c$. Note that $\frac{\alpha^{q+1}b}{c}$ is a square in $\mathbb{F}_{q^2}$, so the intersection $C_{\alpha b} \cap s_c$ consists of the two points
\[
T_1=\left(1,\sqrt{\alpha^{q+1}cb}, \sqrt{\frac{\alpha^{q+1}b}c}\right) \mbox{ and }T_2=\left(1,-\sqrt{\alpha^{q+1}cb}, -\sqrt{\frac{\alpha^{q+1}b}c}\right),
\]
where $\sqrt{\frac{\alpha^{q+1}b}{c}}$ and $-\sqrt{\frac{\alpha^{q+1}b}{c}}$ are the two solutions in $\mathbb{F}_{q^2}$ of the equation $x^2 = \frac{\alpha^{q+1}b}{c}$.
%Also, we note that since $q$ is odd, we have $T_1 \in \mathcal{Z}_\alpha$ if and only if $T_2 \in \mathcal{Z}_\alpha$. Therefore, it suffices to determine when $T_1 \in \mathcal{Z}_\alpha$.
Also, since $q$ is odd, we note that 
\begin{align*}
    \left(\sqrt{\alpha^{q+1}cb}\right)^{q+1}= \left(\sqrt{\frac{\alpha^{q+1}b}c}\right)^{q+1}=\alpha^{q+1} \iff T_1 \in \cZ_{\alpha} \iff T_2 \in \cZ_{\alpha}.
\end{align*}
Observe first that, since $(cb)^{q+1} = (c/b)^{q+1} = 1$, it follows that either $(cb)^{\frac{q+1}{2}}=(c/b)^{\frac{q+1}{2}}=1$ or $(cb)^{\frac{q+1}{2}}=(c/b)^{\frac{q+1}{2}}=-1$. Moreover, when $\alpha^{q+1}$ is a square in $\mathbb{F}_q$, it is easy to check that $\left(\sqrt{\alpha^{q+1}}\right)^{q+1}=\alpha^{q+1}$, while if $\alpha^{q+1}$ is not a square in $\mathbb{F}_q$, then $\left(\sqrt{\alpha^{q+1}}\right)^{q+1}=-\alpha^{q+1}$. Combining these observations, we obtain:
\[
\left(\sqrt{\alpha^{q+1}cb}\right)^{q+1}= \left(\sqrt{\frac{\alpha^{q+1}b}c}\right)^{q+1}=\alpha^{q+1},
\]
when either $\alpha^{q+1}$ is a square in $\mathbb{F}_q$ and $(cb)^{\frac{q+1}{2}} = 1$, or $\alpha^{q+1}$ is not a square in $\mathbb{F}_q$ and $(cb)^{\frac{q+1}{2}} = -1$. The assertion immediately follows.
%On the other hand,
%%\[\left(\sqrt{\alpha^{q+1}cb}\right)^{q+1}=\left(\sqrt{\frac{\alpha^{q+1}b}c}\right)^{q+1}=-\alpha^{q+1},\]
%when either $\alpha^{q+1}$ is not a square in $\mathbb{F}_q$ and $(cb)^{\frac{q+1}{2}} = 1$, or $\alpha^{q+1}$ is a square in $\mathbb{F}_q$ and $(cb)^{\frac{q+1}{2}} = -1$. 
\end{proof}

When $q$ is odd, define
\[\cI_1:=\left\{\alpha \in \cI \colon \alpha^{q+1} \mbox{ is a square in } \F_q\right\} \ \ \  \mbox{ and } \ \ \ \cI_2:=\left\{\alpha \in \cI \colon \alpha^{q+1} \mbox{ is not a square in }\F_q\right\}\]

\begin{lemma} We have 
\[\lvert 
\cI_1 \rvert= 
\begin{cases}
 \frac{q-1}{4} & \mbox{ if }q \equiv 1 \pmod{4}, \\ 
 \frac{q-3}{4} & \mbox{ if } q \equiv 3 \pmod{4}, 
 \end{cases}\] and  
 \[\lvert \cI_2 \rvert= 
 \begin{cases} 
 \frac{q-1}{4} & \mbox{ if } q \equiv 1 \pmod{4}, \\  
 \frac{q+1}{4} & \mbox{ if } q \equiv 3 \pmod{4}. 
 \end{cases}\] 
 \end{lemma}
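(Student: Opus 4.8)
The plan is to reduce the count to a quadratic-character computation in $\F_q^*$, using the norm map to identify $\cI$ with a concrete subset of $\F_q^*$. Throughout write $A := \{a_1,\dots,a_t\}$ for the set of Lemma~\ref{lm:partitionnorm}, so that $t = \frac{q-3}{2}$ and $\F_q \setminus \{0\} = \{\pm 1\} \sqcup A \sqcup A^{-1}$ is a disjoint partition.

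First I would observe that the norm map $N\colon \alpha \mapsto \alpha^{q+1}$ sends $\Lambda$ into $\F_q^* = \F_q \setminus \{0\}$, and that, since the $q-1$ elements of $\Lambda$ have pairwise distinct norms while $|\F_q^*| = q-1$, the map $N$ restricts to a bijection $\Lambda \to \F_q^*$. Consequently $N$ maps $\cI = \{\alpha \in \Lambda \mid \alpha^{q+1} \in A \cup \{-1\}\}$ bijectively onto $A \cup \{-1\}$, carrying $\cI_1$ onto the squares of $A \cup \{-1\}$ and $\cI_2$ onto its non-squares. Thus $|\cI_1|$ and $|\cI_2|$ equal, respectively, the number of squares and of non-squares of $\F_q$ contained in $A \cup \{-1\}$.

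Next, let $\chi$ denote the quadratic character of $\F_q$. I would invoke two elementary facts: that $\chi(a^{-1}) = \chi(a)$ for every $a \in \F_q^*$ (since $\chi(a) \in \{\pm 1\}$), so that $A$ and $A^{-1}$ contain the same number $s_A$ of squares and the same number $n_A$ of non-squares; and that $\F_q^*$ contains exactly $\frac{q-1}{2}$ squares and $\frac{q-1}{2}$ non-squares. Counting squares and non-squares through the partition $\F_q^* = \{\pm 1\} \sqcup A \sqcup A^{-1}$, and recalling that $1$ is always a square while $-1$ is a square precisely when $q \equiv 1 \pmod 4$, produces two linear equations in $s_A, n_A$ in each residue case. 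When $q \equiv 1 \pmod 4$ these are $2 + 2s_A = \frac{q-1}{2}$ and $2 n_A = \frac{q-1}{2}$, giving $s_A = \frac{q-5}{4}$, $n_A = \frac{q-1}{4}$; when $q \equiv 3 \pmod 4$ they are $1 + 2 s_A = \frac{q-1}{2}$ and $1 + 2 n_A = \frac{q-1}{2}$, giving $s_A = n_A = \frac{q-3}{4}$.

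Finally I would add the contribution of $-1$, which lies in $\cI_1$ exactly when $q \equiv 1 \pmod 4$ and in $\cI_2$ exactly when $q \equiv 3 \pmod 4$. Hence for $q \equiv 1 \pmod 4$ one finds $|\cI_1| = s_A + 1 = \frac{q-1}{4}$ and $|\cI_2| = n_A = \frac{q-1}{4}$, whereas for $q \equiv 3 \pmod 4$ one finds $|\cI_1| = s_A = \frac{q-3}{4}$ and $|\cI_2| = n_A + 1 = \frac{q+1}{4}$, as claimed; note $|\cI_1| + |\cI_2| = \frac{q-1}{2} = |\cI|$ in each case, as a consistency check. The argument is entirely elementary, and the only step requiring care — which I would flag as the crux — is the symmetry $\chi(a) = \chi(a^{-1})$, since it is precisely what forces $A$ and $A^{-1}$ to carry equal square/non-square counts and thereby closes the system of equations.
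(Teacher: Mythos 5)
Your proposal is correct and follows essentially the same route as the paper: both arguments hinge on the fact that $a$ is a square if and only if $a^{-1}$ is, so that $A$ and $A^{-1}$ carry equal square counts, then count through the partition $\F_q^* = \{\pm 1\} \cup A \cup A^{-1}$ and add the contribution of the element of norm $-1$ according to the residue of $q$ modulo $4$. The only cosmetic difference is that you compute $|\cI_2|$ directly while the paper deduces it from $|\cI_1| + |\cI_2| = \frac{q-1}{2}$.
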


\begin{proof}
    Clearly, $\mathcal{I} = \mathcal{I}_1 \cup \mathcal{I}_2$ and $\mathcal{I}_1 \cap \mathcal{I}_2 = \emptyset$, so \begin{equation} \label{eq:sumI1I2}
        \lvert \cI_1\rvert+\lvert \cI_2\rvert=\lvert \cI \rvert=\frac{q-1}{2}.\end{equation}
    Let $A = \left\{a_1, \ldots, a_{\frac{q-3}{2}}\right\}$ and $A^{-1} = \left\{a_1^{-1}, \ldots, a_{\frac{q-3}{2}}^{-1}\right\}$, and by Lemma~\ref{lm:partitionnorm}, we have $\{\pm 1\} \cup A \cup A^{-1} = \F_q \setminus \{0\}$. 
Note that a nonzero element $a \in \mathbb{F}_q$ is a square if and only if its inverse $a^{-1}$ is also a square in $\mathbb{F}_q$. Therefore, the number of squares in $A$ equals to the number of squares in $A^{-1}$. Recall that $-1$ is a square in $\mathbb{F}_q$ if and only if $q \equiv 1 \pmod{4}$. Also, the number of nonzero squares in $\mathbb{F}_q$ is $\frac{q-1}{2}$. Thus: 
\begin{itemize}
    \item If $q \equiv 1 \pmod{4}$, it follows that the number of squares in $A$ is $\frac{1}{2} \left( \frac{q-1}{2} - 2 \right) = \frac{q-5}{4}$.

\item If $q \equiv 3 \pmod{4}$, then the number of squares in $A$ is $\frac{1}{2} \left( \frac{q-1}{2} - 1 \right) = \frac{q-3}{4}$.
\end{itemize}
Finally, recall that the element $\beta \in \Lambda$ such that $\beta^{q+1} = -1$ belongs to $\mathcal{I}$. Therefore, the size of $\mathcal{I}_1$ equals the number of squares in $A$ plus one when $q \equiv 1 \pmod{4}$, and the size of $A$ when $q \equiv 3 \pmod{4}$. Taking into account equation~\eqref{eq:sumI1I2}, the claim follows immediately.
\end{proof}

\begin{cor} \label{cor:totalintersection} For fixed $b,c \in \cU$. the following hold. If $q$ is even, then 
\[
 |s_c \cap \cC_{b} \cap \mathcal{Z}|=\lvert \cI \rvert=\frac{q-2}{2}.
\]
If $q$ is odd, then 
   \begin{align*}
      |s_c \cap \cC_{b} \cap \mathcal{Z}|= \begin{cases}
           2 \lvert \cI_1 \rvert & \mbox{ if } (cb)^{\frac{q+1}{2}}=1, \\
           2 \lvert \cI_2 \rvert & \mbox{ if } (cb)^{\frac{q+1}{2}}=-1.
       \end{cases}
    \end{align*}
\end{cor}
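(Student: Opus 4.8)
The plan is to reduce the computation of the triple intersection $s_c \cap \cC_b \cap \cZ$ to the pointwise counts already furnished by Proposition~\ref{prop:geometricintersectiongood}, and then to sum over $\cI$. First I would expand both unions. Since $\cZ = \bigcup_{\beta \in \cI} \cZ_{\beta}$ with the $\cZ_{\beta}$ pairwise disjoint, and $\cC_{b} = \bigcup_{\alpha \in \cI} C_{\alpha b}$, we obtain
\begin{align*}
s_c \cap \cC_{b} \cap \cZ = \bigcup_{\alpha \in \cI} \bigcup_{\beta \in \cI} \left(s_c \cap C_{\alpha b} \cap \cZ_{\beta}\right),
\end{align*}
where the outer union over $\beta$ is disjoint.

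Next I would discard the off-diagonal contributions. By the second bullet of Proposition~\ref{prop:geometricintersectiongood}, valid for both parities of $q$, we have $s_c \cap C_{\alpha b} \cap \cZ_{\beta} = \emptyset$ whenever $\alpha \neq \beta$. Hence only the diagonal terms survive, and the disjointness of the $\cZ_{\alpha}$ yields
\begin{align*}
|s_c \cap \cC_{b} \cap \cZ| = \sum_{\alpha \in \cI} |s_c \cap C_{\alpha b} \cap \cZ_{\alpha}|.
\end{align*}

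For $q$ even, the first bullet of Proposition~\ref{prop:geometricintersectiongood} gives that each summand equals $1$, so the sum is $|\cI| = \frac{q-2}{2}$, as claimed. For $q$ odd I would first observe that, since $b, c \in \cU$, we have $(cb)^{q+1} = 1$, so $(cb)^{\frac{q+1}{2}} \in \{1, -1\}$ and exactly one of the two cases in the statement applies. If $(cb)^{\frac{q+1}{2}} = 1$, then by Proposition~\ref{prop:geometricintersectiongood} the summand $|s_c \cap C_{\alpha b} \cap \cZ_{\alpha}|$ equals $2$ precisely when $\alpha^{q+1}$ is a square in $\F_q$, that is, when $\alpha \in \cI_1$, and equals $0$ otherwise; summing then gives $2|\cI_1|$. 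Symmetrically, if $(cb)^{\frac{q+1}{2}} = -1$ the nonzero summands are exactly those with $\alpha \in \cI_2$, each equal to $2$, yielding $2|\cI_2|$.

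No genuine difficulty arises: the statement is a direct bookkeeping consequence of Proposition~\ref{prop:geometricintersectiongood}. The only point deserving care is the dichotomy $(cb)^{\frac{q+1}{2}} = \pm 1$, which guarantees that the two conditions singled out in the odd case are mutually exclusive and jointly exhaustive, so that the diagonal sum splits cleanly according to whether $\alpha \in \cI_1$ or $\alpha \in \cI_2$.
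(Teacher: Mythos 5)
Your proposal is correct and follows essentially the same route as the paper: both reduce the count to the diagonal sum $\sum_{\alpha \in \cI} |s_c \cap C_{\alpha b} \cap \cZ_{\alpha}|$ via Proposition~\ref{prop:geometricintersectiongood} and then evaluate it according to the parity of $q$ and the value of $(cb)^{\frac{q+1}{2}}$. The only difference is that you spell out the disjointness of the unions and the vanishing of the off-diagonal terms, which the paper leaves implicit.
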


\begin{proof}
    First, consider the case where $q$ is even. By Proposition \ref{prop:geometricintersectiongood}, we immediately obtain
    \[
    |s_c \cap \cC_{b} \cap \mathcal{Z}|=\sum_{\alpha \in \cI} |s_c \cap C_{\alpha b} \cap \mathcal{Z}_{\alpha}|=\lvert \cI \rvert=\frac{q-2}{2}. 
    \]
    Now, suppose that $q$ is odd. Again, by Proposition \ref{prop:geometricintersectiongood}, we have
     \[
     \begin{array}{rl}
    |s_c \cap \cC_{b} \cap \mathcal{Z}| & =\sum\limits_{\alpha \in \cI} |s_c \cap C_{\alpha b} \cap \mathcal{Z}_{\alpha}|\\
    &
    =\sum\limits_{\alpha \in \cI_1} |s_c \cap C_{\alpha b} \cap \mathcal{Z}_{\alpha}| + \sum\limits_{\alpha \in \cI_2} |s_c \cap C_{\alpha b} \cap \mathcal{Z}_{\alpha}|\\
    & =\begin{cases}
           2 \lvert \cI_1 \rvert & \mbox{ if } (cb)^{\frac{q+1}{2}}=1, \\
           2 \lvert \cI_2 \rvert & \mbox{ if } (cb)^{\frac{q+1}{2}}=-1.
       \end{cases} 
     \end{array}
    \]
    This completes the proof.
\end{proof}

We now count the number of good sets. We start with the case where $q$ is even. 

\begin{prop}\label{prop:allgoodsetseven}
    Assume that $q$ is even. The number of good sets equals  
    \begin{align*}
    \lvert \cI \rvert^{q+1} (q+1)!=\left( \frac{q-1}{2} \right)^{q+1} \cdot (q+1)!
    \end{align*}
\end{prop}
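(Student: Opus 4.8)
The plan is to transfer the statement, via Proposition~\ref{prop3}, into a purely combinatorial count on the point set $\cZ$. That proposition tells us that $\eps$ restricts to a bijection between good sets and those $(q+1)$-subsets $S\subseteq\cZ$ satisfying \eqref{prope1} and \eqref{prope2}, so it is enough to count such subsets $S$. I would first record the two partitions of $\cZ$ that underlie the two conditions. On the one hand, each point $(1,\alpha u,\alpha v)\in\cZ_\alpha$ lies on the single line $s_{u/v}$, so $\{\,s_c\cap\cZ\mid c\in\cU\,\}$ partitions $\cZ$ into $q+1$ blocks. On the other hand, since $q$ is even the proof of Proposition~\ref{prop:geometricintersectiongood} gives $C_{\alpha b}\cap\cZ_\beta=\emptyset$ whenever $\alpha\neq\beta$, while $\{\,C_{\alpha b}\cap\cZ_\alpha\mid b\in\cU\,\}$ partitions $\cZ_\alpha$; hence every point of $\cZ$ lies on exactly one $\cC_b$, and $\{\,\cC_b\mid b\in\cU\,\}$ partitions $\cZ$ as well.

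Next I would make explicit the grid structure relating these partitions. By \eqref{eq:belonglineconics}, associating with $P=(1,\alpha u,\alpha v)\in\cZ_\alpha$ the triple $(c,b,\alpha)$, where $c=u/v$ and $b=\alpha^{1-q}uv$, defines a map $\cZ\to\cU\times\cU\times\cI$. Within a fixed $\cZ_\alpha$ this is a bijection onto $\cU\times\cU$, because $cb=\alpha^{1-q}u^2$ and $b/c=\alpha^{1-q}v^2$ recover $u$ and $v$ uniquely, squaring being a bijection of the odd-order group $\cU$ when $q$ is even. This is exactly the content of Corollary~\ref{cor:totalintersection} in the even case: for every pair $(c,b)\in\cU\times\cU$ the cell $s_c\cap\cC_b\cap\cZ$ consists of precisely $|\cI|$ points, one in each subgeometry $\cZ_\alpha$, $\alpha\in\cI$.

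The count is then immediate. A subset $S$ meeting each line $s_c$ in a single point has the form $S=\{\,P_c\mid c\in\cU\,\}$ with $P_c\in s_c\cap\cZ$, and is automatically of size $q+1$; condition \eqref{prope2} states that the induced map $\sigma\colon c\mapsto b(P_c)$ is a bijection of $\cU$. Conversely, any bijection $\sigma\colon\cU\to\cU$ together with, for each $c$, a choice of one of the $|\cI|$ points of $s_c\cap\cC_{\sigma(c)}\cap\cZ$, produces a valid $S$, and distinct data produce distinct $S$ since $S$ reconstructs both $\sigma$ and the chosen points. As the lines $s_c$ are pairwise disjoint and the values $\sigma(c)$ are distinct, these per-cell choices are free and independent. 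Therefore the number of admissible $S$, and hence of good sets, equals
\[
(q+1)!\cdot|\cI|^{q+1},
\]
which is the asserted value.

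The argument is essentially bookkeeping once the grid is in place, so the only delicate point is establishing that the two partitions interlock as a genuine $(q+1)\times(q+1)$ array with each cell carrying exactly $|\cI|$ points, one per subgeometry. This is precisely where the evenness of $q$ is used — through the bijectivity of squaring on $\cU$ and the vanishing $C_{\alpha b}\cap\cZ_\beta=\emptyset$ for $\alpha\neq\beta$ — and it is furnished by Proposition~\ref{prop:geometricintersectiongood} and Corollary~\ref{cor:totalintersection}; the freeness of the choices then follows from the disjointness of the blocks $s_c$.
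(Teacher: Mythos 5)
Your argument is correct and follows the paper's route: both reduce the count via Proposition~\ref{prop3} to subsets of $\cZ$ meeting every $s_c$ and every $\cC_b$ exactly once, and both rest on the cell structure $|s_c\cap\cC_b\cap\cZ|=|\cI|$ supplied by Proposition~\ref{prop:geometricintersectiongood} and Corollary~\ref{cor:totalintersection}. The only (cosmetic) difference is that you factor the count directly as a choice of a permutation $\sigma$ of $\cU$ times $|\cI|$ independent choices per cell, whereas the paper selects the points $R_1,\dots,R_{q+1}$ sequentially, subtracting the cells already used.
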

\begin{proof}
By Proposition~\ref{prop3}, the number of good sets is equal to the number of subsets $\varepsilon(\mathcal{P}) \subset \mathcal{Z}$ of size $q+1$ such that $|s_c \cap \varepsilon(\mathcal{P})| = |\cC_b \cap \varepsilon(\mathcal{P})| = 1$, $\forall c, b \in \mathcal{U}$. For each $b \in \mathcal{U}$, observe that $C_{\alpha b} \cap C_{\beta b} = \emptyset$, if $\alpha, \beta \in \cI$, $\alpha \ne \beta$. Set $\eps(\cP) = \{R_1, \dots, R_{q+1}\}$ and let $R_i$ denote the unique point in $\varepsilon(\mathcal{P})$ lying on $\cC_{b_i}$, where $\mathcal{U} = \{b_1, \ldots, b_{q+1}\}$. Then $R_i \in C_{\alpha_i b_i}$ for some $\alpha_i \in \mathcal{I}$. Moreover, by Proposition \ref{prop:geometricintersectiongood}, each line in $\mathcal{Y}$ intersects a given conic $C_{\alpha_i b_i}$ in exactly one point. Thus, for every $R_i$ there exists a unique line $s_{c_i} \in \mathcal{Y}$ such that $R_i \in s_{c_i}$, where $\mathcal{U} = \{c_1, \ldots, c_{q+1}\}$.
%Let $\mathcal{U} = \{b_1, \ldots, b_{q+1}\}$. For each $b \in \mathcal{U}$, observe that $C_{\alpha b} \cap C_{\beta b} = \emptyset$, if $\alpha \ne \beta$. Since $|\cC_b \cap \varepsilon(\mathcal{P})| = 1$ for all $b \in \mathcal{U}$, let $R_i$ denote the unique point in $\varepsilon(\mathcal{P})$ lying on $\cC_{b_i}$. Then $R_i \in C_{\alpha_i b_i}$ for some $\alpha_i \in \mathcal{I}$. Moreover, by Proposition \ref{prop:geometricintersectiongood}, each line in $\mathcal{Y}$ intersects a given conic $C_{\alpha_i b_i}$ in exactly one point. Thus, for every $R_i$ there exists a unique line $s_{c_i} \in \mathcal{Y}$ such that $R_i \in s_{c_i}$. 
It follows that $R_1$ can be chosen in $|\mathcal{I}| \cdot (q+1)$ ways and, by Corollary \ref{cor:totalintersection}, $R_i$ (for $i > 1$) can be chosen in
\[
|\mathcal{I}| \cdot (q+1) - \sum_{j=1}^{i-1} |s_{c_j} \cap \cC_{b_j} \cap \mathcal{Z}| = |\mathcal{I}| \cdot (q+1 - (i - 1))
\]
ways. This establishes the claim.
\end{proof}

Now we study the case $q$ odd.

\begin{prop}\label{prop:allgoodsetsodd}
Assume that $q$ is odd. The number of good sets equals  
    \begin{align*}
    \left(\lvert \cI_1 \rvert \lvert \cI_2 \rvert\right)^{\frac{q+1}{2}} \prod_{i=0}^{\frac{q-1}{2}} \left(q+1-2i \right)^2=
    \begin{cases} 
    \left(\frac{q-1}{4} \right)^{q+1} \prod\limits_{i=0}^{\frac{q-1}{2}} \left(q+1-2i \right)^2 & \mbox{ if }q \equiv 1 \pmod{4}, \\ 
    \left(\frac{(q+1)(q-3)}{16}\right)^{\frac{q+1}{2}} \prod\limits_{i=0}^{\frac{q-1}{2}} \left(q+1-2i \right)^2 & \mbox{ if } q \equiv 3 \pmod{4}. 
    \end{cases}
    \end{align*}
\end{prop}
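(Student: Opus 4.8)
The plan is to adapt the sequential count of the even case (Proposition~\ref{prop:allgoodsetseven}) to the odd case, the essential new difficulty being that the relevant intersection numbers are no longer constant. By Proposition~\ref{prop3}, $\eps$ is a bijection between good sets and $(q+1)$-subsets $R\subseteq\cZ$ with $|s_c\cap R|=1$ and $|\cC_b\cap R|=1$ for all $c,b\in\cU$. Since every point of $\cZ$ lies on a unique line $s_c$ and on a unique member $\cC_b$ (with $c,b\in\cU$), choosing such an $R$ amounts to choosing a bijection $\sigma\colon\cU\to\cU$, $c\mapsto b$, together with one point in each cell $s_c\cap\cC_{\sigma(c)}$. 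Hence the number of good sets equals the permanent of the $(q+1)\times(q+1)$ matrix $M$ whose entries are $M_{c,b}=|s_c\cap\cC_b\cap\cZ|$, and the whole task reduces to evaluating this permanent.

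First I would determine $M$ explicitly. Let $\chi(w)=w^{(q+1)/2}\in\{\pm1\}$ be the order-two character of the cyclic group $\cU$, and put $\cU^{+}=\{w:\chi(w)=1\}$ and $\cU^{-}=\{w:\chi(w)=-1\}$, each of size $n:=\tfrac{q+1}{2}$. By Corollary~\ref{cor:totalintersection} one has $M_{c,b}=2|\cI_1|$ when $\chi(c)=\chi(b)$ and $M_{c,b}=2|\cI_2|$ when $\chi(c)\neq\chi(b)$; more precisely, Proposition~\ref{prop:geometricintersectiongood} shows that the two points of a nonempty cell lie in a single $\cZ_\alpha$ with $\alpha\in\cI_1$ in the first case and $\alpha\in\cI_2$ in the second. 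Ordering the indices with $\cU^{+}$ first, $M$ takes the block form
\[
M=\begin{pmatrix} 2|\cI_1|\,\mathbf J & 2|\cI_2|\,\mathbf J\\ 2|\cI_2|\,\mathbf J & 2|\cI_1|\,\mathbf J\end{pmatrix},
\]
where $\mathbf J$ is the all-ones $n\times n$ matrix.

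The core of the argument is the evaluation of this permanent, run as the same step-by-step selection as in the even case but now tracking the quadratic character. Picking the point of $R$ on $\cC_b$ one column at a time, the number of admissible choices is the total number $|\cI|(q+1)$ of points of $\cZ$ on $\cC_b$, minus those lying on the lines $s_c$ already used; by the previous paragraph the deduction contributed by a used line is $2|\cI_1|$ or $2|\cI_2|$ according to whether its class matches that of $b$. Separating the bookkeeping over $\cU^{+}$ and $\cU^{-}$ is meant to produce, on the one hand, the per-cell geometric multiplicities $|\cI_1|$ and $|\cI_2|$ (assembling to $(|\cI_1|\,|\cI_2|)^{(q+1)/2}$) and, on the other hand, a descending product $\prod_{i=0}^{(q-1)/2}(q+1-2i)^2$ counting the columns still available in each class. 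Substituting the values of $|\cI_1|$ and $|\cI_2|$ obtained in the lemma preceding Corollary~\ref{cor:totalintersection} then splits the answer into the two congruence cases $q\equiv1$ and $q\equiv3\pmod4$.

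The main obstacle is precisely the non-uniformity of $M$. In the even case all cells have the same cardinality $|\cI|$, so the sequential count telescopes immediately; here, when we process a column $\cC_b$, the points removed by the previously chosen lines depend on how many of those lines lie in $\cU^{+}$ versus $\cU^{-}$, and the class of a previously selected line is itself dictated by whether that point came from an $\cI_1$- or an $\cI_2$-cell. Thus the running deduction depends, a priori, on the entire history of the selection and not merely on the number of columns already treated. The delicate step I expect to dominate the proof is therefore to pin down this interaction between the classes $\cU^{+}$ and $\cU^{-}$ — establishing that, after a suitable ordering of the columns, the per-step counts are history-independent so that the permanent factors into the claimed product, rather than into the unstructured sum over all $\sigma$. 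Isolating and justifying this independence is where the real work lies.
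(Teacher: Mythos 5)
Your reduction of the count to the permanent of the $(q+1)\times(q+1)$ matrix $M$ with entries $M_{c,b}=\lvert s_c\cap\cC_b\cap\cZ\rvert$ is correct, and so is the block description $M=\left(\begin{smallmatrix}2\lvert\cI_1\rvert\,\mathbf J & 2\lvert\cI_2\rvert\,\mathbf J\\ 2\lvert\cI_2\rvert\,\mathbf J & 2\lvert\cI_1\rvert\,\mathbf J\end{smallmatrix}\right)$ with $n=\frac{q+1}{2}$ and $\cU^{\pm}=\{w\in\cU\colon w^{(q+1)/2}=\pm1\}$. The gap is that you never evaluate this permanent: you only announce the hope that, after a suitable ordering of the columns, the step-by-step count becomes history-independent so that the permanent ``factors into the claimed product''. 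That hope cannot be realized, because the permanent of this block matrix has a closed form and does not factor. A permutation $\sigma\colon b\mapsto c$ with $j=\lvert\{b\in\cU^+\colon\sigma(b)\in\cU^+\}\rvert$ contributes weight $(2\lvert\cI_1\rvert)^{2j}(2\lvert\cI_2\rvert)^{2(n-j)}$, and there are $(n!)^2\binom{n}{j}^2$ such permutations, so, using $\prod_{i=0}^{n-1}(q+1-2i)=2^n n!$,
\[
\mathrm{per}(M)=\prod_{i=0}^{\frac{q-1}{2}}(q+1-2i)^2\;\sum_{j=0}^{n}\binom{n}{j}^2\lvert\cI_1\rvert^{2j}\lvert\cI_2\rvert^{\,q+1-2j}.
\]
This is a genuine sum over $j$, not the single monomial $\left(\lvert\cI_1\rvert\lvert\cI_2\rvert\right)^{n}$ of the statement. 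For instance, for $q=5$ one has $\lvert\cI_1\rvert=\lvert\cI_2\rvert=1$, every cell $s_c\cap\cC_b\cap\cZ$ has exactly two points, $M=2\mathbf J_6$, and $\mathrm{per}(M)=2^6\cdot 6!=46080$, whereas the stated formula gives $2304$.

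So the obstruction you isolated at the end of your proposal (the dependence of the running deductions on how many of the previously used lines lie in each character class) is real and is fatal to the factorization you aim for; it is not a technicality to be smoothed over by reordering. For comparison, the paper's own proof runs the sequential count after fixing in advance, for each column $\cC_{b_i}$, whether the chosen point lies in an $\cI_1$- or an $\cI_2$-cell (the parameter $h$, with $h=k$ forced by the character count); for one fixed pattern the per-step counts are indeed history-free and multiply to $\lvert\cI_1\rvert^{2h}\lvert\cI_2\rvert^{\,q+1-2h}\prod_{i}(q+1-2i)^2$, but one must then sum over the $\binom{n}{h}^2$ admissible patterns and over $h$, which reproduces exactly the sum displayed above rather than the claimed product. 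Your permanent reformulation therefore makes the missing step visible but also shows it cannot be supplied in the form the statement requires: any correct completion must end with the sum over $j$ (or an equivalent expression), and the counting in the remainder of the section that relies on the stated product would need to be revisited accordingly.
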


\begin{proof}
Again, by Proposition~\ref{prop3}, the number of good sets equals the number of subsets $\varepsilon(\mathcal{P}) \subset \mathcal{Z}$ of size $q+1$ such that $|s_c \cap \eps(\cP)| = |\cC_b \cap \eps(\cP)| = 1$, $\forall c,b \in \cU$. Again, for each $b \in \mathcal{U}$, observe that $C_{\alpha b} \cap C_{\beta b} = \emptyset$, if $\alpha, \beta \in \cI$, $\alpha \ne \beta$. Set $\eps(\cP) = \{R_1, \dots, R_{q+1}\}$ and let $R_i$ denote the unique point in $\varepsilon(\mathcal{P})$ lying on $\cC_{b_i}$, where $\mathcal{U} = \{b_1, \ldots, b_{q+1}\}$. Also, through each point $R_i$ there passes exactly one line of $\mathcal{Y}$, say $s_{c_i}$, for $i \in \{1, \ldots, q+1\}$ and $\cU=\{c_1,\ldots,c_{q+1}\}$. Without loss of generality, we may assume that 
\begin{align*}
b_i^{\frac{q+1}2} & =1, & \mbox{ for } i \in \left\{1,\ldots,\frac{q+1}2\right\}, \\
b_i^{\frac{q+1}2} & =-1, & \mbox{ for } i \in \left\{\frac{q+1}2+1,\ldots,q+1\right\}.
\end{align*}
Then
\begin{align*}
& R_i \in C_{\alpha_i b_i}, & \text{for some } \alpha_i \in \mathcal{I}, i \in \left\{1, \ldots, \frac{q+1}{2} \right\}, \\
& R_{\frac{q+1}{2}+i} \in C_{\beta_i b_{\frac{q+1}{2}+i}}, & \quad \text{for some } \beta_i \in \mathcal{I}, i \in \left\{1, \ldots, \frac{q+1}{2} \right\}. 
\end{align*}
We may assume that
\[
\alpha_1, \ldots, \alpha_h, \ \beta_1, \ldots, \beta_k \in \mathcal{I}_1
\quad \text{and} \quad
\alpha_{h+1}, \ldots, \alpha_{\frac{q+1}{2}}, \ \beta_{k+1}, \ldots, \beta_{\frac{q+1}{2}} \in \mathcal{I}_2,
\]
for some integers \( h, k \in \left\{0, 1, \ldots, \frac{q+1}{2} \right\} \). With this notation if \( h = 0 \), then all \( \alpha_1, \ldots, \alpha_{\frac{q+1}{2}} \in \mathcal{I}_2 \), whereas if \( h = \frac{q+1}{2} \), then all \( \alpha_1, \ldots, \alpha_{\frac{q+1}{2}} \in \mathcal{I}_1 \). Similarly, if \( k = 0 \), then all \( \beta_1, \ldots, \beta_{\frac{q+1}{2}} \in \mathcal{I}_2 \), whereas if \( k = \frac{q+1}{2} \), then all \( \beta_1, \ldots, \beta_{\frac{q+1}{2}} \in \mathcal{I}_1 \). Observe that Corollary~\ref{cor:totalintersection} implies that
\begin{align*}
& c_i^{\frac{q+1}{2}} = 1, & \text{for all } i \in \{1, \ldots, h\} \cup \left\{k + \frac{q+1}{2} + 1, \ldots, q+1\right\}, \\
& c_i^{\frac{q+1}{2}} = -1, & \text{for all } i \in \left\{h+1, \ldots, \frac{q+1}{2} \right\} \cup \left\{ \frac{q+1}{2} + 1, \ldots, k + \frac{q+1}{2} \right\}.
\end{align*}
Since the number of elements $c \in \cU$ such that $c^{\frac{q+1}{2}} = 1$ is exactly $(q+1)/2$, it follows that
\[
h = k.
\]
Consider first the case where $i \in \{1, \ldots, h\} \cup \left\{h + \frac{q+1}{2} + 1, \ldots, q+1\right\}$. By Proposition \ref{prop:geometricintersectiongood}, we know that the line $s_{c_i}$ is secant to the conic $\mathcal{C}_{\alpha_i b_i}$. Therefore, $R_1$ can be chosen in $|\mathcal{I}_1| \cdot (q+1)$ ways. Also, by Corollary \ref{cor:totalintersection}, for $i \in \{2, \ldots, h\}$, the point $R_i$ can be chosen in
\[
|\mathcal{I}_1| \cdot (q+1) - \sum_{j=1}^{i-1} 2|\mathcal{I}_1| = |\mathcal{I}_1| \cdot (q+1 - 2(i - 1))
\]
ways. Similarly, for $i \in \left\{h + \frac{q+1}{2} + 1, \ldots, q+1\right\}$, again by Corollary \ref{cor:totalintersection}, $R_i$ can be chosen in
\[
|\mathcal{I}_1| \cdot (q+1) - \sum_{j=1}^{h - 1} 2|\mathcal{I}_1| - \sum_{j=h + \frac{q+1}{2} + 1}^{i} 2|\mathcal{I}_1| = |\cI_1| \cdot \left(q+1 - 2 \left(i - h + \frac{q+1}{2}\right)\right)
\]
ways. Now, the choice of $R_i$ for indices 
\[
i \in \left\{h+1, \ldots, \frac{q+1}{2} \right\} \cup \left\{ \frac{q+1}{2} + 1, \ldots, h + \frac{q+1}{2} \right\}
\]
is independent of the choices of $R_1, \ldots, R_h$ and $R_{h + \frac{q+1}{2} + 1}, \ldots, R_{q+1}$. Therefore, similarly to the previous case, for each $i \in \left\{h+1,\ldots,\frac{q+1}{2}\right\}$, there are
\[
|\mathcal{I}_2| \cdot \left( (q+1) - 2(i-h-1) \right)
\]
possible choices for $R_i$, whereas there are
\[
|\mathcal{I}_2| \cdot \left( (q+1) - 2(i-h-1) \right)
\]
possible ways to choose $R_i$ if $i \in \left\{\frac{q+1}{2}+1,\ldots,h+\frac{q+1}{2}\right\}$. This completes the proof of the statement.
     \end{proof} 

As we will see in Section \ref{subse:automorphis}, in order to estimate the number of inequivalent parallelisms, it is useful to count certain good sets.% such that the $\alpha_i$ satisfy $\alpha_i^{q+1} \neq -1$. For this goal, the same argument used in Proposition \ref{prop:allgoodsetsodd} can be adapted, leading to the following result.

\begin{cor}\label{cor:allgoodsetsodd}
Assume that $q$ is odd. The number of good sets 
\begin{align*}
& \cP = \left\{\left(P_{\alpha_{1} u_1}, \pi_{\alpha_{1} v_1}\right), \dots, \left(P_{\alpha_{{q+1}} u_{q+1}}, \pi_{\alpha_{{q+1}} v_{q+1}}\right)\right\},  
\end{align*}
with $\alpha_i^{q+1} \neq -1$, $i = 1, \dots, q+1$, equals  
    \begin{align*}
    & \left(\frac{(q-5)(q-1)}{16}\right)^{\frac{q+1}{2}} \prod_{i=0}^{\frac{q-1}{2}} \left(q+1-2i \right)^2 & \mbox{ if }q \equiv 1 \pmod{4}, \\ 
    & \left(\frac{q-3}{4}\right)^{q+1} \prod_{i=0}^{\frac{q-1}{2}} \left(q+1-2i \right)^2 & \mbox{ if } q \equiv 3 \pmod{4}. 
    \end{align*}
\end{cor}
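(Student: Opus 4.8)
The plan is to recognize the quantity in question as the number of good sets of a sub-configuration and then to reuse Proposition~\ref{prop:allgoodsetsodd}. Since $q$ is odd, there is a unique $\beta_0 \in \Lambda$ with $\beta_0^{q+1} = -1$, and by definition $\beta_0 \in \cI$. A good set $\cP$ satisfies $\alpha_i^{q+1} \ne -1$ for all $i$ exactly when every $\alpha_i$ lies in $\cI^\ast := \cI \setminus \{\beta_0\}$, that is, when the point set $\eps(\cP)$ is contained in $\cZ^\ast := \bigcup_{\alpha \in \cI^\ast} \cZ_\alpha$. First I would check that, for a size-$(q+1)$ subset of $\cZ^\ast$, the good-set conditions \eqref{prope1} and \eqref{prope2} of Proposition~\ref{prop3} are unchanged if one replaces $\cC_b$ by $\cC_b^\ast := \bigcup_{\alpha \in \cI^\ast} C_{\alpha b}$: indeed $C_{\beta_0 b} \cap \cZ_\alpha = \emptyset$ for every $\alpha \in \cI^\ast$ (a point of $\cZ_\alpha$ on $C_{\beta_0 b}$ would force $\alpha^{2(q+1)} = \beta_0^{2(q+1)}$, hence $\alpha = \beta_0$ by \eqref{eq:oppositenonbelong}), so $\cC_b \cap \eps(\cP) = \cC_b^\ast \cap \eps(\cP)$. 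Thus these are precisely the good sets of the configuration obtained by replacing $\cI$ with $\cI^\ast$.

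I would then rerun the enumeration of Proposition~\ref{prop:allgoodsetsodd} for this reduced configuration. Nothing in the argument depends on the index set except through the intersection numbers supplied by Proposition~\ref{prop:geometricintersectiongood} and Corollary~\ref{cor:totalintersection}: restricting to $\cZ^\ast$ turns $|s_c \cap \cC_b \cap \cZ|$ from $2|\cI_1|$ or $2|\cI_2|$ into $2|\cI_1^\ast|$ or $2|\cI_2^\ast|$, with $\cI_1^\ast := \cI_1 \setminus \{\beta_0\}$ and $\cI_2^\ast := \cI_2 \setminus \{\beta_0\}$. Every other ingredient is untouched: the conics $C_{\alpha b}$ remain pairwise disjoint on $\cZ^\ast$, a line of $\cY$ still meets a fixed secant conic in two points, and the balance forcing $h=k$ comes from $|\{c \in \cU : c^{\frac{q+1}{2}} = 1\}| = \frac{q+1}{2}$, a fact about $\cU$ alone. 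Hence the count is the expression of Proposition~\ref{prop:allgoodsetsodd} with $|\cI_1|, |\cI_2|$ replaced by $|\cI_1^\ast|, |\cI_2^\ast|$, namely $\left(|\cI_1^\ast|\,|\cI_2^\ast|\right)^{\frac{q+1}{2}} \prod_{i=0}^{\frac{q-1}{2}}(q+1-2i)^2$, the product factor being independent of the index set.

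It remains to evaluate $|\cI_1^\ast|$ and $|\cI_2^\ast|$. Recalling that $-1$ is a square in $\F_q$ if and only if $q \equiv 1 \pmod 4$, the norm $\beta_0^{q+1} = -1$ is a square exactly when $q \equiv 1 \pmod 4$; thus $\beta_0 \in \cI_1$ when $q \equiv 1 \pmod 4$ and $\beta_0 \in \cI_2$ when $q \equiv 3 \pmod 4$. Using $|\cI_1| = \frac{q-1}{4}$, $|\cI_2| = \frac{q-1}{4}$ for $q \equiv 1 \pmod 4$ and $|\cI_1| = \frac{q-3}{4}$, $|\cI_2| = \frac{q+1}{4}$ for $q \equiv 3 \pmod 4$, removing $\beta_0$ gives $|\cI_1^\ast| = \frac{q-5}{4}$, $|\cI_2^\ast| = \frac{q-1}{4}$ in the first case and $|\cI_1^\ast| = |\cI_2^\ast| = \frac{q-3}{4}$ in the second. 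Substituting into the displayed expression produces $\left(\frac{(q-5)(q-1)}{16}\right)^{\frac{q+1}{2}} \prod_{i=0}^{\frac{q-1}{2}}(q+1-2i)^2$ when $q \equiv 1 \pmod 4$ and $\left(\frac{q-3}{4}\right)^{q+1} \prod_{i=0}^{\frac{q-1}{2}}(q+1-2i)^2$ when $q \equiv 3 \pmod 4$, which is the claim.

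The step I expect to require the most care is the second one: one must verify that excising a single subgeometry $\Sigma_{\beta_0}$ affects the enumeration of Proposition~\ref{prop:allgoodsetsodd} only through the two intersection numbers, leaving intact the sign-balancing $h=k$ and the line-depletion product, so that the argument is genuinely uniform in the index set. One should also note the degenerate small-$q$ instances in which $\cI_1^\ast$ or $\cI_2^\ast$ becomes empty, where the formula correctly evaluates to $0$.
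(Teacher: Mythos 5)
Your proposal is correct and follows essentially the same route as the paper: the paper's proof also consists of rerunning the enumeration of Proposition~\ref{prop:allgoodsetsodd} with the element $\beta_0$ of norm $-1$ removed from $\cI$, after observing that $\beta_0 \in \cI_1$ when $q \equiv 1 \pmod 4$ and $\beta_0 \in \cI_2$ when $q \equiv 3 \pmod 4$. Your additional verification that restricting to $\cZ^{\ast}$ and $\cC_b^{\ast}$ leaves the characterization of Proposition~\ref{prop3} intact (via the disjointness $C_{\beta_0 b} \cap \cZ_\alpha = \emptyset$ for $\alpha \ne \beta_0$) makes explicit a step the paper leaves implicit, and your arithmetic for $|\cI_1^{\ast}|$, $|\cI_2^{\ast}|$ matches the stated formulas.
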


\begin{proof}
The assertion follows immediately by applying the same argument as in Proposition \ref{prop:allgoodsetsodd}, and by observing that if $q \equiv 1 \pmod{4}$, then the element $\beta \in \mathcal{I}$ such that $\beta^{q+1}=-1$ belongs to $\mathcal{I}_1$, whereas if $q \equiv 3 \pmod{4}$, then $\beta$ belongs to $\mathcal{I}_2$.
\end{proof} 

\subsection{The automorphism group of \texorpdfstring{$\Pi_{\cP}$}{Pi}} \label{subse:automorphis}

Let $\Gamma$ be the stabilizer of the Desarguesian line-spread $\cD_{\eta}$ in $\cJ$, where we recall that $\cJ$ is the stabilizer in $\PGaL(4, q^2)$ of $\Sigma_{\eta}$. Note that $\Gamma = Stab_{\cJ}\left(\overline{\cD}_{\eta}\right)$, and from \cite{dye1991spreads}, \cite{van2016desarguesian}, we have that 
\begin{align*}
    \Gamma \simeq \left(\GL(2, q^2)/Z \rtimes \iota \right) \rtimes Aut(\F_q),
\end{align*}
where $\iota$ is represented by 
\begin{align*}
\begin{pmatrix}
0 & 0 & 1 & 0 \\
0 & 0 & 0 & 1 \\
1 & 0 & 0 & 0 \\
0 & 1 & 0 & 0 
\end{pmatrix},
\end{align*}
and 
\begin{align*}
    & \begin{pmatrix}
        a & b & 0 & 0 \\
        c & d & 0 & 0 \\
        0 & 0 & a^q & b^q \\
        0 & 0 & c^q & d^q
    \end{pmatrix}, & a,b,c,d \in \F_{q^2}, ad-bc \ne 0, \\
    & \begin{pmatrix}
        \lambda & 0 & 0 & 0 \\
        0 & \lambda & 0 & 0 \\
        0 & 0 & \lambda & 0 \\
        0 & 0 & 0 & \lambda
    \end{pmatrix}, & \lambda \in \F_{q}, 
\end{align*}
are the elements of $\GL(2, q^2)$ and $Z$, respectively. Then 
\begin{align*}
    \lvert \Gamma \rvert = 2hq^2(q^4-1)(q+1).
\end{align*}
Furthermore, the subgroup of $\Gamma$ that stabilizes the line $r_{U_1}$, namely  $\Gamma_{r_{U_1}}$, has size
\begin{align*}
    \lvert \Gamma_{r_{U_1}} \rvert = 2hq^2(q^2-1)(q+1),
\end{align*}
since $r_{U_1} \cap \Sigma_{\eta} \in \cD_{\eta}$ and $\Gamma$ is transitive on the $q^2+1$ lines of $\cD_{\eta}$.

\begin{prop}\label{automorphisms}
Let $\Pi_{\cP}$, $\Pi_{\cP'}$ line-parallelisms of $\Sigma_{\eta}$, where $\cP$, $\cP'$ are good sets. If there exists $ \psi \in \cJ$ such that $\Pi_{\cP}^\psi = \Pi_{\cP'}$, then $\psi \in \Gamma_{r_{U_1}}$. 
\end{prop}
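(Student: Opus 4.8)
The plan is to show separately that $\psi$ must stabilize the Desarguesian spread $\cD_{\eta}$ and then the line $r_{U_1}$, which together give $\psi\in\Gamma_{r_{U_1}}$. First I would exploit that $\cD_{\eta}$ is the \emph{unique} regular member of each parallelism: every other member is a genuine Hall spread obtained by a single regulus switch, hence non-regular because $q\ge 3$, and any element of $\cJ$ acts on $\Sigma_{\eta}$ as a collineation, which preserves regularity (the property is defined purely by reguli). Since $\psi$ carries $\Pi_{\cP}$ to $\Pi_{\cP'}$, it must send the unique regular member $\cD_{\eta}$ of $\Pi_{\cP}$ to the unique regular member $\cD_{\eta}$ of $\Pi_{\cP'}$. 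Thus $\psi$ stabilizes $\cD_{\eta}$, i.e. $\psi\in\Gamma$.

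Next I would recover the family of \emph{switched reguli} intrinsically. For a Hall member $H=(\cS_{\ell}\setminus\cR_{\ell})\cup\cR_{\ell}^{o}$ the regulus $\cR_{\ell}\subset\cD_{\eta}$ can be described as the unique regulus contained in $\cD_{\eta}$ whose opposite regulus is contained in $H$; equivalently, it is the ruling of $\cD_{\eta}$ on the hyperbolic quadric ruled by the $q+1$ mutually skew lines of $H$ meeting $r_{U_1}\cap\Sigma_{\eta}$. Because this description depends only on the pair $(\cD_{\eta},H)$, and $\psi$ preserves $\cD_{\eta}$ as well as the set of Hall members, $\psi$ maps the set of switched reguli of $\Pi_{\cP}$ onto the set of switched reguli of $\Pi_{\cP'}$.

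Now I would count. A regular spread of $\PG(3,q)$ has exactly $q^{2}+q$ reguli through any one of its lines (under the Klein correspondence these are the non-tangent plane sections through a point of the associated elliptic quadric). By construction each parallelism contains $q^{2}+q$ Hall spreads, and by Proposition~\ref{mainp1} together with property~\eqref{property1} of the good set the reguli $\cR_{\ell}$, for distinct $\ell\in\cL_{\cP}$, are pairwise distinct and all pass through $r_{U_1}\cap\Sigma_{\eta}$. Hence the switched reguli of $\Pi_{\cP}$ are precisely \emph{all} $q^{2}+q$ reguli of $\cD_{\eta}$ through $r_{U_1}\cap\Sigma_{\eta}$, and the same holds for $\Pi_{\cP'}$. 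These reguli have no common line other than $r_{U_1}\cap\Sigma_{\eta}$, since a second common line would force all of them to pass through two prescribed lines, whereas at most $q+1<q^{2}+q$ reguli of $\cD_{\eta}$ do so. As $\psi$ permutes this family and the family of $\Pi_{\cP'}$ has the same unique common line $r_{U_1}\cap\Sigma_{\eta}$, we get $(r_{U_1}\cap\Sigma_{\eta})^{\psi}=r_{U_1}\cap\Sigma_{\eta}$; being a semilinear map of $\cJ$ that stabilizes $\Sigma_{\eta}$, $\psi$ then fixes the $\F_{q^{2}}$-extension $r_{U_1}=\langle r_{U_1}\cap\Sigma_{\eta}\rangle_{q^{2}}$. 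Therefore $\psi\in\Gamma_{r_{U_1}}$.

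The main obstacle is the intrinsic recovery in the second step: one must verify that for each Hall member $H$ there is \emph{exactly one} regulus of $\cD_{\eta}$ whose opposite lies in $H$, so that the assignment $H\mapsto\cR_{\ell}$ is canonical and hence $\psi$-equivariant. I expect this uniqueness to follow from the index-one subregular structure of $H$ (a Hall spread obtained by a single regulus switch has a unique replacement regulus for $q\ge 3$), or else from a direct incidence count showing that the $q+1$ lines of $H$ meeting a line $m\in\cD_{\eta}$ are mutually skew precisely when $m\in\cR_{\ell}$. Once this canonicity is secured, the remaining steps are bookkeeping with the counts already established.
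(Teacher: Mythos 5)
Your proof is correct and follows essentially the same strategy as the paper's: first force $\psi\in\Gamma$ via the uniqueness of the Desarguesian member, then show that $\psi$ permutes the switched reguli (all of which pass through $r_{U_1}\cap\Sigma_{\eta}$) and hence fixes that line, so that $\psi\in\Gamma_{r_{U_1}}$. The only differences are minor: the paper extracts $r_{U_1}\cap\Sigma_{\eta}$ as the intersection of just two switched reguli, whereas you use the whole family of $q^2+q$ reguli through it and its unique common line (arguably a cleaner route, since two distinct reguli of a regular spread may share two lines), and the canonicity of the assignment $H\mapsto\cR_{\ell}$ that you flag as the main obstacle is exactly the step the paper passes over with the words ``then necessarily $\cR_{\ell}^{\psi}=\cR_{\ell'}$''.
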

\begin{proof}
By construction 
\begin{align*}
& \Pi_{\cP} = \left\{ \left(\cS_\ell \setminus \cR_{\ell}\right) \cup {\cR_{\ell}^o} \mid \ell \in \cL_{\cP} \right\} \cup \left\{\cD_{\eta}\right\}, \\
& \Pi_{\cP'} = \left\{ \left(\cS_{\ell'} \setminus \cR_{\ell'}\right) \cup {\cR_{\ell'}^o} \mid \ell' \in \cL_{\cP'} \right\} \cup \left\{\cD_{\eta}\right\}.
\end{align*}
If $\psi \in \cJ$ is such that $\Pi_{\cP}^\psi = \Pi_{\cP'}$, then $\cD_{\eta}^\psi = \cD_{\eta}$, since $\cD_{\eta}$ is the unique Desarguesian line-spread in $\Pi_{\cP}$ and $\Pi_{\cP'}$. Hence $\psi \in \Gamma$. Let $\ell, \tilde{\ell}$ be distinct lines of $\cL_{\cP}$ and $\ell', \tilde{\ell'}$ be distinct lines of $\cL_{\cP'}$ such that 
\begin{align*}
    & \left(\left(\cS_\ell \setminus \cR_{\ell}\right) \cup {\cR_{\ell}^o}\right)^\psi = \left(\cS_{\ell'} \setminus \cR_{\ell'}\right) \cup {\cR_{\ell'}^o}, \\
    & \left(\left(\cS_{\tilde{\ell}} \setminus \cR_{\tilde{\ell}}\right) \cup {\cR_{\tilde{\ell}}^o}\right)^\psi = \left(\cS_{\tilde{\ell'}} \setminus \cR_{\tilde{\ell'}}\right) \cup {\cR_{\tilde{\ell'}}^o}.
\end{align*}
Then necessarily 
\begin{align*}
    & \cR_{\ell}^\psi = \cR_{\ell'}, \\
    & \cR_{\tilde{\ell}}^\psi = \cR_{\tilde{\ell'}},
\end{align*}
where 
\begin{align*}
    r_{U_1} \cap \Sigma_{\eta} = \cR_{\ell} \cap \cR_{\tilde{\ell}} = \cR_{\ell'} \cap \cR_{\tilde{\ell'}}.
\end{align*}
Therefore, 
\begin{align*}
    \left(r_{U_1} \cap \Sigma_{\eta}\right)^\psi & = \left( \cR_{\ell} \cap \cR_{\tilde{\ell}} \right)^\psi \\
    & = \cR_{\ell}^\psi \cap \cR_{\tilde{\ell}}^\psi \\
    & = \cR_{\ell'} \cap \cR_{\tilde{\ell'}} \\
    & = r_{U_1} \cap \Sigma_{\eta},
\end{align*}
and hence $\psi \in Stab_{\Gamma}\left(r_{U_1}\right) = \Gamma_{r_{U_1}}$.
\end{proof}

\begin{cor}
    $Stab_{\cJ}(\Pi_{\cP}) \le \Gamma_{r_{U_1}}$.
\end{cor}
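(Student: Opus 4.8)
The plan is to obtain the corollary as an immediate specialization of Proposition~\ref{automorphisms}, taking the source and target parallelism to coincide. The substantive geometric work has already been carried out in that proposition, so here the only task is to recognize that the stabilizer condition is the $\cP' = \cP$ instance of the isomorphism condition treated there.

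First I would unwind the definition: by construction $Stab_{\cJ}(\Pi_{\cP})$ consists precisely of those $\psi \in \cJ$ satisfying $\Pi_{\cP}^{\psi} = \Pi_{\cP}$. Next, I would fix an arbitrary such $\psi$ and apply Proposition~\ref{automorphisms} with $\cP' = \cP$. Since $\cP$ is a good set, $\Pi_{\cP'} = \Pi_{\cP}$ is a line-parallelism of $\Sigma_{\eta}$, and the relation $\Pi_{\cP}^{\psi} = \Pi_{\cP} = \Pi_{\cP'}$ puts $\psi$ exactly in the hypothesis of the proposition. The conclusion of the proposition then forces $\psi \in \Gamma_{r_{U_1}}$. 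As this holds for every element of $Stab_{\cJ}(\Pi_{\cP})$, I would conclude $Stab_{\cJ}(\Pi_{\cP}) \le \Gamma_{r_{U_1}}$, as required.

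There is essentially no obstacle at the level of the corollary: all the real content lies upstream in Proposition~\ref{automorphisms}, where one uses that $\cD_{\eta}$ is the unique Desarguesian spread in the parallelism (forcing $\psi \in \Gamma$) and that the $q^2+q$ reguli switched to form the Hall spreads share the common line $r_{U_1} \cap \Sigma_{\eta}$ (forcing $\psi$ to fix $r_{U_1}$, hence $\psi \in \Gamma_{r_{U_1}}$). The corollary merely records the self-map case of that argument, and no further computation is needed.
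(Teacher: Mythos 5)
Your proof is correct and is exactly the intended argument: the paper states this corollary without proof as the immediate specialization of Proposition~\ref{automorphisms} to the case $\cP' = \cP$, which is precisely what you do.
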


Denote by $\cT$ the set of all line-parallelisms of $\Sigma_{\eta}$, obtained from the good sets by means of Theorem~\ref{main}, i.e.,
\begin{align*}
    \cT = \left\{ \Pi_{\cP} \mid \cP \mbox{ is a good set}\right\}.
\end{align*}

By construction, a line $\ell \in \cL_{\cP}$ uniquely identifies the point $P_{\alpha_iu_i} = \ell \cap r_{U_1}$ and the plane $\pi_{\alpha_iv_i}$ spanned by $\ell$ and $r_{U_1}$. Hence, if $\cP$, $\cP'$ are good sets, then 
\begin{align*}
    \cL_{\cP} = \cL_{\cP'} \iff \cP = \cP'.
\end{align*}
Denote by 
\begin{align*}  \cL_{\cP}^{\tau_{\eta}} & = \left\{ \ell^{\tau_{\eta}} \mid \ell \in \cL_{\cP} \right\} \\
    & = \left\{p\left(P_{\alpha_iu_i}^{\tau_{\eta}}, \pi_{\alpha_iv_i}^{\tau_{\eta}}\right) \setminus \{r_{U_1}\} \mid \left(P_{\alpha_iu_i}, \pi_{\alpha_iv_i}\right) \in \cP \right\}. 
\end{align*}
\begin{prop}\label{distinctgoodsets}
%    Assume that $\alpha_2^{q+1} \ne -1$, in the case when $q$ is odd. Then 
%\begin{align*}
Let \begin{align*}
& \cP = \left\{\left(P_{\alpha_{1} u_1}, \pi_{\alpha_{1} v_1}\right), \dots, \left(P_{\alpha_{{q+1}} u_{q+1}}, \pi_{\alpha_{{q+1}} v_{q+1}}\right)\right\}, & \alpha_{i} \in \cI, u_i, v_i \in \cU, 
\end{align*}
and 
\begin{align*}
& \cP' = \left\{\left(P_{\alpha_{1}' u_1'}, \pi_{\alpha_{1}' v_1'}\right), \dots, \left(P_{\alpha_{q+1}' u'_{q+1}}, \pi_{\alpha_{q+1}' v'_{q+1}}\right)\right\}, & \alpha_{i}' \in \cI, u_i', v_i' \in \cU, 
\end{align*}
be good sets. Assume that $\alpha_i^{q+1}\neq -1$ and $\alpha_i'^{q+1}\neq -1$, $i = 1, \dots, q+1$. If $\cP \ne \cP'$, then $\Pi_{\cP} \ne \Pi_{\cP'}$.
%\end{align*} 
\end{prop}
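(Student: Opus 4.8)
The plan is to prove the contrapositive by reconstructing the good set from its parallelism, using only data intrinsic to the parallelism together with the fixed line $r_{U_1}\cap\Sigma_\eta$. As observed just before the statement, $\cL_{\cP}=\cL_{\cP'}$ is equivalent to $\cP=\cP'$, so it suffices to show that $\Pi_{\cP}=\Pi_{\cP'}$ forces $\cL_{\cP}=\cL_{\cP'}$. Assume $\Pi_{\cP}=\Pi_{\cP'}$. Since $\cD_\eta$ is the only Desarguesian spread in each parallelism, the two parallelisms share the same family of $q^2+q$ Hall spreads, and it is enough to attach to every such Hall spread $H$ a single well-defined line of $\cL$, and to check that this line lies in $\cL_{\cP}$ when $H$ is read inside $\Pi_{\cP}$ and in $\cL_{\cP'}$ when $H$ is read inside $\Pi_{\cP'}$.

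Fix a Hall spread $H$ in the common family. By Lemma~\ref{lemma3}, $H=(\cS_\ell\setminus\cR_\ell)\cup\cR_\ell^o$, where $\cR_\ell$ is a regulus of $\cD_\eta$ through $r_{U_1}\cap\Sigma_\eta$; in particular $r_{U_1}\cap\Sigma_\eta\in\cR_\ell$ is deleted in passing to $H$, while every line of the opposite regulus $\cR_\ell^o$ meets $r_{U_1}\cap\Sigma_\eta$. Since $H$ is a spread not containing $r_{U_1}\cap\Sigma_\eta$, the $q+1$ points of $r_{U_1}\cap\Sigma_\eta$ lie on $q+1$ distinct lines of $H$, and these are precisely the lines of $\cR_\ell^o$. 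Hence
\begin{align*}
\cR_\ell^o=\left\{m\in H\mid m\cap\left(r_{U_1}\cap\Sigma_\eta\right)\neq\emptyset\right\}
\end{align*}
is determined by $H$ alone, and therefore so are $\cR_\ell=(\cR_\ell^o)^o$ and the regular spread $\cS_\ell=(H\setminus\cR_\ell^o)\cup\cR_\ell$.

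Now $\cS_\ell$ is a regular line-spread of $\Sigma_\eta$, so its pair of transversal (director) lines $\{\ell,\ell^{\tau_\eta}\}$ in $\PG(3,q^2)$ is an invariant of $\cS_\ell$, hence of $H$. It remains to single out $\ell$. Because $\ell\in\cL$, the line $\ell$ meets some $\Sigma_{\alpha_i}$ with $\alpha_i\in\cI$ in a Baer subline, and $\cI\cap\cU=\emptyset$ gives $\alpha_i\in\Lambda\setminus\cU$; by Lemma~\ref{lemmasub} the image $\ell^{\tau_\eta}$ meets $\Sigma_\beta$ with $\beta^{q+1}=\alpha_i^{-(q+1)}$, where likewise $\beta\in\Lambda\setminus\cU$. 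Since $\alpha_i^{q+1}\neq -1$ by hypothesis, also $\beta^{q+1}\neq -1$, so the hypotheses of \eqref{indici} are met and $\beta\notin\cI$; thus $\ell^{\tau_\eta}\notin\cL$, and exactly one of $\ell,\ell^{\tau_\eta}$ belongs to $\cL$, namely $\ell$. Consequently $\ell$ is recovered from $H$ alone. Reading $H$ inside $\Pi_{\cP}$ this recovered line is the member of $\cL_{\cP}$ producing $H$, and reading it inside $\Pi_{\cP'}$ it is the member of $\cL_{\cP'}$ producing $H$; as the reconstruction depends only on $H$, these coincide. Letting $H$ range over the common family yields $\cL_{\cP}=\cL_{\cP'}$, hence $\cP=\cP'$, which is the desired conclusion.

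I expect the decisive step to be the reversed-regulus recovery: one must verify carefully that the lines of $H$ incident with $r_{U_1}\cap\Sigma_\eta$ are exactly the $q+1$ lines of $\cR_\ell^o$ and nothing more, and that a regular spread determines its transversal pair uniquely. The hypothesis $\alpha_i^{q+1}\neq -1$ enters at a single point, through \eqref{indici}, and is precisely what rules out $\ell^{\tau_\eta}\in\cL$: when $\alpha_i^{q+1}=-1$ the subgeometry $\Sigma_{\alpha_i}$ is $\tau_\eta$-invariant and both transversals lie in $\cL$, so the reconstruction of $\ell$ would no longer be canonical.
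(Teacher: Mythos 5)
Your proposal is correct and follows essentially the same route as the paper: the paper likewise passes from $\Pi_{\cP}=\Pi_{\cP'}$ to the equality of the transversal-line sets $\cL_{\cP}\cup\cL_{\cP}^{\tau_{\eta}}=\cL_{\cP'}\cup\cL_{\cP'}^{\tau_{\eta}}$ and then separates $\cL_{\cP}$ from $\cL_{\cP}^{\tau_{\eta}}$ using Lemma~\ref{lemmasub} and \eqref{indici}, exactly as in your final step. Your version merely spells out in more detail the (correct) recovery of $\cR_\ell^o$, $\cS_\ell$ and the transversal pair from each Hall spread, which the paper leaves implicit.
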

\begin{proof}
    If $\Pi_{\cP} = \Pi_{\cP'}$, then necessarily 
    \begin{align}
        \cL_{\cP} \cup \cL_{\cP}^{\tau_{\eta}} = \cL_{\cP'} \cup \cL_{\cP'}^{\tau_{\eta}}. \label{eq}
    \end{align}
The lines in $\cL_{\cP}$, $\cL_{\cP'}$ meet certain subgeometries in $\left\{\Sigma_{\alpha_{i}} \mid \alpha_{i} \in \cI\right\}$, in a Baer subline, whereas, by Lemma~\ref{lemmasub}, those in $\cL_{\cP}^{\tau_{\eta}}$, $\cL_{\cP'}^{\tau_{\eta}}$ meet some subgeometries in $\left\{\Sigma_{\alpha_{j}} \mid \alpha_{j} \not\in \cI\right\}$, in a Baer subline. Therefore $\Sigma_{\alpha_i} \cap \Sigma_{\alpha_j} = \emptyset$, where $\alpha_i \in \cI$, $\alpha_j \notin \cI$. Then, \eqref{eq} implies 
$\cL_{\cP} = \cL_{\cP'}$,
that is $\cP = \cP'$.
\end{proof}
%The previous result is no longer true if $q$ is odd and $\alpha_2^{q+1} = -1$, see for instance the case $q=3$ described in Section~\ref{caseq=3}. 

\begin{theorem}
The number of mutually not equivalent line-parallelisms of $\Sigma_{\eta}$ contained in $\cT$ is at least
    \begin{align*}
      & \left(\frac{q-1}{2}\right)^{q+1} \frac{(q-2)!}{2hq(q+1)} & \mbox{ if } q \mbox{ is even, } \\
      & \left(\frac{(q-5)(q-1)}{16}\right)^{\frac{q+1}{2}} \frac{(q^2-1)^{\frac{q-1}{2}}}{2hq^2(q+1)} & \mbox{ if }q \equiv 1 \pmod{4}, \\ 
      &  \left(\frac{q-3}{4}\right)^{q+1} \frac{(q^2-1)^{\frac{q-1}{2}}}{2hq^2(q+1)} & \mbox{ if } q \equiv 3 \pmod{4}.  
    \end{align*}
\end{theorem}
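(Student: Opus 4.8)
The plan is to reduce the enumeration of inequivalent parallelisms to a single orbit-counting estimate, feeding in the good-set counts already obtained together with the constraint on isomorphisms from Proposition~\ref{automorphisms}.

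I would first pin down the group responsible for all possible equivalences. By Proposition~\ref{automorphisms} (and the ensuing corollary $Stab_{\cJ}(\Pi_{\cP}) \le \Gamma_{r_{U_1}}$), any $\psi \in \cJ$ with $\Pi_{\cP}^{\psi} = \Pi_{\cP'}$ already lies in $\Gamma_{r_{U_1}}$. Hence the equivalence classes of the parallelisms in $\cT$ are exactly the orbits of $\Gamma_{r_{U_1}}$ acting on $\cT$, and every such orbit has size at most $\lvert \Gamma_{r_{U_1}} \rvert = 2hq^2(q^2-1)(q+1)$. The elementary bound ``number of orbits $\ge$ number of objects divided by the order of the group'' then shows that the number of inequivalent parallelisms is at least $\lvert \cT_0 \rvert / \lvert \Gamma_{r_{U_1}} \rvert$ for any subfamily $\cT_0 \subseteq \cT$ of pairwise distinct parallelisms.

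Next I would exhibit a large subfamily $\cT_0$ of pairwise distinct parallelisms, so that the numerator of the estimate is as big as possible. For $q$ even the hypothesis $\alpha_i^{q+1} \ne -1$ of Proposition~\ref{distinctgoodsets} holds automatically, since in characteristic two $-1 = 1$ and $\cI$ is disjoint from $\cU$; thus distinct good sets give distinct parallelisms and I may take $\lvert \cT_0 \rvert$ to be the full count $\bigl(\tfrac{q-1}{2}\bigr)^{q+1}(q+1)!$ of Proposition~\ref{prop:allgoodsetseven}. For $q$ odd I would instead restrict to those good sets all of whose parameters satisfy $\alpha_i^{q+1} \ne -1$; Proposition~\ref{distinctgoodsets} again yields that distinct good sets of this restricted type produce distinct parallelisms, so $\lvert \cT_0 \rvert$ may be taken to equal the count of Corollary~\ref{cor:allgoodsetsodd}, split according to $q \equiv 1$ or $3 \pmod 4$.

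Finally I would divide and simplify. Substituting $\lvert \Gamma_{r_{U_1}} \rvert = 2hq^2(q^2-1)(q+1)$ and using $(q+1)! = (q+1)q(q-1)(q-2)!$ together with $q^2-1 = (q-1)(q+1)$, the quotient in the even case collapses to $\bigl(\tfrac{q-1}{2}\bigr)^{q+1}\tfrac{(q-2)!}{2hq(q+1)}$, and the two odd cases reduce analogously to the stated expressions. I expect the only genuine content to be the two structural inputs already secured — that distinct (restricted) good sets give truly distinct parallelisms (Proposition~\ref{distinctgoodsets}) and that no equivalence can occur outside $\Gamma_{r_{U_1}}$ (Proposition~\ref{automorphisms}); once these are in hand the argument is a one-line orbit bound followed by routine algebraic simplification of the quotients, and the main point to watch is that the good-set enumeration must be taken in its restricted form in the odd case so that the numerator really counts pairwise distinct parallelisms.
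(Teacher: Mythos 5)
Your proposal follows the paper's proof essentially verbatim: bound every $\cJ$-equivalence class inside $\cT$ by $\lvert\Gamma_{r_{U_1}}\rvert = 2hq^2(q^2-1)(q+1)$ via Proposition~\ref{automorphisms}, count pairwise distinct parallelisms using Proposition~\ref{distinctgoodsets} together with Proposition~\ref{prop:allgoodsetseven} (for $q$ even) or the restricted count of Corollary~\ref{cor:allgoodsetsodd} (for $q$ odd), and divide. One caution: the ``routine algebraic simplification'' in the odd case is the only step with real content, since it requires $\prod_{i=0}^{(q-1)/2}(q+1-2i)^2 \ge (q^2-1)^{\frac{q+1}{2}}$; the paper justifies this by the term-by-term comparison $(q+1-2i)^2 > (q+1)^2-2(q+1)$, which fails for large $i$ (at $i=\tfrac{q-1}{2}$ the left side is $4$), so this inequality — and hence the final form of the odd-case bound — deserves independent verification rather than being waved through.
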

\begin{proof}
From Proposition~\ref{automorphisms}, if $\Pi_{\cP}, \Pi_{\cP'} \in \cT$ and $\Pi_{\cP}^\psi = \Pi_{\cP'}$, for some $\psi \in \cJ$, then $\psi \in \Gamma_{r_{U_1}}$. Hence
\begin{equation} \label{eq:boundstabiliser}
|\Pi_{\cP}^{\cJ} \cap \cT| \le |\Gamma_{r_{U_1}}| = 2hq^2(q^2-1)(q+1)
\end{equation}

Assume first that $q$ is even. By the definition of good set (see Definition \ref{goodset}), we know that each good set
\begin{align*}
\cP = \left\{\left(P_{\alpha_{1} u_1}, \pi_{\alpha_{1} v_1}\right), \dots, \left(P_{\alpha_{q+1} u_{q+1}}, \pi_{\alpha_{q+1} v_{q+1}}\right)\right\}
\end{align*}
is such that $\alpha_i^{q+1} \neq 1$, $i = 1, \dots, q+1$. Therefore, by Proposition \ref{distinctgoodsets}, the number of distinct parallelisms in $\cT$ equals the number of distinct good sets. By Proposition \ref{prop:allgoodsetseven}, this number is given by:
\[
|\cT| = |\cI|^{q+1} \cdot (q+1)! = \left( \frac{q-1}{2} \right)^{q+1} \cdot (q+1)!.
\]
Now, considering \eqref{eq:boundstabiliser}, we obtain that in $\cT$ there are at least
\[
\begin{array}{rl}
\displaystyle \frac{|\cT|}{|\Pi_{\cP}^{\cJ} \cap \cT|} &\displaystyle \ge \left( \frac{q-1}{2} \right)^{q+1} \cdot \frac{(q+1)!}{2hq^2(q^2 - 1)(q+1)} \\[10pt]
&\displaystyle = \left( \frac{q-1}{2} \right)^{q+1} \cdot \frac{(q-2)!}{2hq(q + 1)}
\end{array}
\]
non-equivalent line-parallelisms of $\Sigma_{\eta}$.

Now, assume that $q$ is odd. The size of $\cT$ is clearly greater than or equal to the number of good sets
\begin{align*}
\cP = \left\{\left(P_{\alpha_{1} u_1}, \pi_{\alpha_{1} v_1}\right), \dots, \left(P_{\alpha_{q+1} u_{q+1}}, \pi_{\alpha_{q+1} v_{q+1}}\right)\right\},
\end{align*}
having the property that $\alpha_i^{q+1} \neq -1$, $i = 1, \dots, q+1$. Therefore, by Corollary \ref{cor:allgoodsetsodd} and taking into account Proposition \ref{distinctgoodsets}, we obtain
\[
|\cT| \geq 
    \begin{cases} 
    \left(\frac{(q-5)(q-1)}{16}\right)^{\frac{q+1}{2}} \prod\limits_{i=0}^{\frac{q-1}{2}} \left(q+1-2i \right)^2 & \mbox{ if }q \equiv 1 \pmod{4}, \\ 
    \left(\frac{q-3}{4}\right)^{q+1} \prod\limits_{i=0}^{\frac{q-1}{2}} \left(q+1-2i \right)^2 & \mbox{ if } q \equiv 3 \pmod{4}. 
    \end{cases}
\]
So, when $q$ is odd, in $\cT$ there are at least 
\begin{align*}
    \frac{|\cT|}{|\Pi_{\cP}^{\cJ} \cap \cT|} \ge 
   \begin{cases} 
   \left(\frac{(q-5)(q-1)}{16}\right)^{\frac{q+1}{2}} \frac{(q^2-1)^{\frac{q-1}{2}}}{2hq^2(q+1)} & \mbox{ if }q \equiv 1 \pmod{4} \\ 
   \left(\frac{q-3}{4}\right)^{q+1} \frac{(q^2-1)^{\frac{q-1}{2}}}{2hq^2(q+1)} & \mbox{ if } q \equiv 3 \pmod{4}
   \end{cases}
\end{align*}
not equivalent line-parallelisms of $\Sigma_{\eta}$. Indeed,
\begin{align*}
    \frac{\prod_{i = 0}^{\frac{q-1}{2}} \left(q+1 - 2 i \right)^2}{2hq^2(q^2-1)(q+1)} > \frac{(q^2-1)^{\frac{q-1}{2}}}{2hq^2(q+1)},
\end{align*}
since
\begin{align*}
    \prod_{i = 0}^{\frac{q-1}{2}} \left(q+1 - 2i \right)^2 > \prod_{i = 0}^{\frac{q-1}{2}} \left((q+1)^2 - 2 (q+1) \right) = \prod_{i = 0}^{\frac{q-1}{2}} \left(q^2-1 \right) = \left(q^2-1\right)^\frac{q+1}{2}.
\end{align*}
\end{proof}

\subsection{Further comments}

In this final part we outline further properties related to $\cP$, $\eps(\cP)$ and $\Pi_{\cP}$. 
%Observe that each of the lines
%\begin{align*}
%    & s'_{u_i}: X_2 = u_i X_1, & u_i \in \cU, \\
%    & s''_{v_i}: X_3 = v_i X_1, & v_i \in \cU,
%\end{align*}
%meets $\cZ$ in $q+1$ points, and  
%\begin{align*}
%    & \{s'_{u_i} \cap \cZ \mid u_i \in \cU\}, \\
%    & \{s''_{v_i} \cap \cZ \mid v_i \in \cU\}, 
%\end{align*}
%both are partitions of $\cZ$. Moreover,
%\begin{align*}
%    |\left\{ r \in \cL_{\cP} \mid P_{u_i} \in r \right\}| & = q |s'_{u_i} \cap \eps(\cP)|, \\
%    |\left\{ r \in \cL_{\cP} \mid r \subset \pi_{v_i} \right\}| & = q |s''_{v_i} \cap \eps(\cP)|.
%\end{align*}
The set $\cZ_\alpha$ occurs as the intersection of two Hermitian curves, see for instance \cite{giuzzi2001collineation}. The linear collineation group preserving it, say $G$, has order $3(q+1)^2$, with structure $\left(C_{q+1} \times C_{q+1}\right) \rtimes S_3$, where $S_3$ is the symmetric group acting in its natural representation on the set $\{(1,0,0), (0,1,0), (0,0,1)\}$, see \cite[Theorem 2.3]{giuzzi2001collineation}. Denote by $G_1$ the stabilizer of $(1,0,0)$ in $G$ and by $H$ the subgroup of $G$ fixing each of the three points $(1,0,0)$, $(0,1,0)$, $(0,0,1)$. Then $G_1 = \langle H, \delta \rangle$, where $\delta$ is the projectivity represented by 
\begin{align*}
    \begin{pmatrix}
        1 & 0 & 0 \\
        0 & 0 & 1 \\
        0 & 1 & 0
    \end{pmatrix}, &
\end{align*}
whereas the elements of $H$ are represented by 
\begin{align}
    & \begin{pmatrix}
        1 & 0 & 0 \\
        0 & u & 0 \\
        0 & 0 & v
    \end{pmatrix}, & u,v \in \cU. \label{mat1}
\end{align}

\begin{prop}\label{prop4}
    Let $\cP$ be a good set. If $g \in G_1$, then $\eps^{-1}\left(\eps(\cP)^g\right)$ is a good set.
\end{prop}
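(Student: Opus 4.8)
The plan is to reduce everything to the intersection characterization of good sets given in Proposition~\ref{prop3}. Recall that $\eps$ is a bijection between sets of $q+1$ pairs and $(q+1)$-subsets of $\cZ$, and that $\cP$ is good precisely when $\eps(\cP)$ meets each line $s_c \in \cY$ and each conic-union $\cC_b$ in exactly one point. Since $g$ is a bijection of $\PG(2,q^2)$, the set $\eps(\cP)^g$ automatically has size $q+1$; the whole task is therefore to show that $\eps(\cP)^g$ is contained in $\cZ$ and still satisfies \eqref{prope1} and \eqref{prope2}. Once this is established, $\eps^{-1}\left(\eps(\cP)^g\right)$ is a well-defined set of $q+1$ pairs whose image under $\eps$ satisfies the two conditions, hence a good set by Proposition~\ref{prop3}.

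First I would verify that $G_1$ stabilizes $\cZ$; it is enough to check this on the generators $\delta$ and the elements of $H$. An element of $H$ represented by $\diag(1,r,t)$ with $r,t \in \cU$ sends a point $(1,\alpha u,\alpha v) \in \cZ_\alpha$ to $(1,\alpha r u,\alpha t v)$, while $\delta$ sends it to $(1,\alpha v,\alpha u)$; both images lie in $\cZ_\alpha$ because $\cU$ is closed under multiplication. Thus each generator fixes every $\cZ_\alpha$ setwise, so $G_1$ stabilizes $\cZ = \bigcup_{\alpha\in\cI}\cZ_\alpha$ and $\eps(\cP)^g \subseteq \cZ$.

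The heart of the argument is to show that $G_1$ permutes the members of $\cY$ and the members of $\left\{\cC_b \mid b\in\cU\right\}$. Computing the induced action on defining equations, the element $\diag(1,r,t)$ maps $s_c: X_2 = cX_3$ to $s_{cr/t}$ and maps the conic $C_{\alpha b}: \alpha^{q+1}bX_1^2 - X_2X_3 = 0$ to $C_{\alpha,brt}$, while $\delta$ maps $s_c$ to $s_{c^{-1}}$ and fixes each $C_{\alpha b}$. All the resulting indices lie in $\cU$ precisely because $\cU$ is a multiplicative group, and since the action on the conics fixes the parameter $\alpha$, it sends $\cC_b = \bigcup_{\alpha\in\cI}C_{\alpha b}$ to $\cC_{brt}$ (respectively to $\cC_b$). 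Hence $G_1$ permutes $\cY$ and permutes $\left\{\cC_b \mid b\in\cU\right\}$. I expect this equation-level bookkeeping to be the only genuine work in the proof: it is routine, but one must track the $\cU$-membership of every index carefully, and verify that the parameter $\alpha$ is left untouched so that the union defining $\cC_b$ is respected.

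Finally I would assemble the conclusion. For each $c \in \cU$ we have $|s_c \cap \eps(\cP)^g| = |s_c^{g^{-1}} \cap \eps(\cP)|$, and since $g^{-1} \in G_1$ carries $s_c$ to some $s_{c'} \in \cY$, property \eqref{prope1} for $\cP$ yields $|s_{c'} \cap \eps(\cP)| = 1$. The identical argument applied to $\cC_b$, using that $g^{-1}$ carries $\cC_b$ to some $\cC_{b'}$ together with \eqref{prope2}, gives $|\cC_b \cap \eps(\cP)^g| = 1$ for all $b \in \cU$. Therefore $\eps(\cP)^g$ satisfies both \eqref{prope1} and \eqref{prope2}, and Proposition~\ref{prop3} shows that $\eps^{-1}\left(\eps(\cP)^g\right)$ is a good set, completing the argument.
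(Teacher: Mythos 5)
Your proposal is correct and follows essentially the same route as the paper: both arguments show that $G_1$ permutes the lines of $\cY$ among themselves and the unions $\cC_b$ among themselves (with the index staying in $\cU$ and the parameter $\alpha$ preserved), and then transport the intersection conditions \eqref{prope1}--\eqref{prope2} of Proposition~\ref{prop3} along $g$. Your explicit check that $G_1$ stabilizes $\cZ$, which the paper leaves implicit, is a harmless and reasonable addition ensuring $\eps^{-1}$ applies.
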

\begin{proof}
    %Since $g \in G_1$, we have that $\cY^g = \cY$ and $\cC^g = \cC$. 
    Let $g \in G_1$. Then $\left(s_c, \cC_b\right)^g$ equals either $\left(s_\frac{cu}{v}, \cC_{buv}\right)$ or $\left(s_\frac{u}{cv}, \cC_{buv}\right)$, according as $g$ is represented by \eqref{mat1} or by
\begin{align*}
    & \begin{pmatrix}
        1 & 0 & 0 \\
        0 & 0 & u \\
        0 & v & 0
    \end{pmatrix}, & u,v \in \cU.
\end{align*}
    Hence if $c,b \in \cU$, there exist $c', b' \in \cU$ such that $s_c^g = s_{c'}$ and $\cC_b^g = \cC_{b'}$. By Proposition~\ref{prop3}, it is enough to observe that 
    \begin{align*}
        1 & = |s_c \cap \eps(\cP)|
        = |\left(s_c \cap \eps(\cP)\right)^g| 
        = |s_{c'} \cap \eps(\cP)^g|, \\
        1 & = |\cC_b \cap \eps(\cP)|
        = |\left(\cC_b \cap \eps(\cP)\right)^g| 
        = |\cC_{b'} \cap \eps(\cP)^g|.
    \end{align*}
\end{proof}

\begin{prop}
    If $g \in H$, then $\Pi_{\cP}$ and $\Pi_{\eps^{-1}\left( \eps(\cP)^g \right)}$ are equivalent.
\end{prop}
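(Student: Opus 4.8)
The plan is to produce an explicit projectivity $\psi\in\cJ$ realizing the equivalence, i.e.\ with $\Pi_{\cP}^{\psi}=\Pi_{\eps^{-1}(\eps(\cP)^g)}$. Since both parallelisms contain $\cD_{\eta}$ and are built from reguli through $r_{U_1}\cap\Sigma_{\eta}$, I expect $\psi$ to fix $\cD_{\eta}$ and $r_{U_1}$, i.e.\ $\psi\in\Gamma_{r_{U_1}}$, in accordance with Proposition~\ref{automorphisms}. First I would translate the action of $g$ into an action on pairs: writing $g=\diag(1,u_0,v_0)$ for the matrix in \eqref{mat1} with $u_0,v_0\in\cU$, the point $(1,\alpha u,\alpha v)=\eps$-image of $(P_{\alpha u},\pi_{\alpha v})$ is sent to $(1,\alpha(uu_0),\alpha(vv_0))$. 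As $uu_0,vv_0\in\cU$ and the norm $\alpha^{q+1}$ is unchanged, this point lies in $\cZ_{\alpha}$, so $\cP'':=\eps^{-1}(\eps(\cP)^g)$ is obtained from $\cP$ by replacing each $(P_{\alpha_i u_i},\pi_{\alpha_i v_i})$ with $(P_{\alpha_i(u_i u_0)},\pi_{\alpha_i(v_i v_0)})$; that $\cP''$ is a good set is exactly Proposition~\ref{prop4}.

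Next I would construct $\psi$ as the projectivity $\diag(w,1,u_0 w,v_0)$, where $w\in\F_{q^2}^{*}$ satisfies $w^{q-1}=u_0/v_0$; such $w$ exists because $u_0/v_0\in\cU=\{x^{q-1}\colon x\in\F_{q^2}^{*}\}$. The verifications I would carry out are: (a) $\psi$ stabilizes $\Sigma_{\eta}$ --- a direct computation, using $u_0,v_0\in\cU$ and $w^{q-1}=u_0/v_0$, shows $\psi$ maps $(x,y,\eta x^q,\eta y^q)$ into $\Sigma_{\eta}$; the same computation shows $\psi$ leaves every $\Sigma_{\alpha}$, $\alpha\in\Lambda$, invariant and fixes $t_1,t_2$, so $\psi$ fixes $\overline{\cD}_{\eta}$, hence $\cD_{\eta}$; (b) $\psi$ fixes $r_{U_1}$ and sends $P_{\alpha u}\mapsto P_{\alpha(uu_0)}$, $\pi_{\alpha v}\mapsto\pi_{\alpha(vv_0)}$, by computing the images of the point $(1,0,\alpha u,0)$ and of the plane $X_4=\alpha v X_2$; (c) $\psi$ commutes with $\tau_{\eta}$. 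For (c) the clean route is to note that $\psi^{-1}\tau_{\eta}\psi$ is an involution fixing $\Sigma_{\eta}$ pointwise (as $\psi$ stabilizes $\Sigma_{\eta}$ and $\tau_{\eta}$ fixes it pointwise), and the pointwise stabilizer of the Baer subgeometry $\Sigma_{\eta}$ in $\PGaL(4,q^2)$ is $\{\mathrm{id},\tau_{\eta}\}$, forcing $\psi^{-1}\tau_{\eta}\psi=\tau_{\eta}$.

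Then I would assemble the conclusion. From (a) and (b), for $\ell\in p(P_{\alpha u},\pi_{\alpha v})\setminus\{r_{U_1}\}$ the image $\ell^{\psi}$ passes through $P_{\alpha(uu_0)}$, lies in $\pi_{\alpha(vv_0)}$, and (since $\psi$ fixes $\Sigma_{\alpha}$) meets $\Sigma_{\alpha}$ in a Baer subline; hence $\ell^{\psi}\in p(P_{\alpha(uu_0)},\pi_{\alpha(vv_0)})\setminus\{r_{U_1}\}$ and $\cL_{\cP}^{\psi}=\cL_{\cP''}$. Moreover, $\cS_{\ell}$ is the Desarguesian spread of $\Sigma_{\eta}$ with transversals $\ell,\ell^{\tau_{\eta}}$, so by (c) one gets $\cS_{\ell}^{\psi}=\cS_{\ell^{\psi}}$, whence $\cR_{\ell}^{\psi}=\cD_{\eta}^{\psi}\cap\cS_{\ell^{\psi}}=\cR_{\ell^{\psi}}$ and $(\cR_{\ell}^{o})^{\psi}=\cR_{\ell^{\psi}}^{o}$. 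Thus each Hall spread $(\cS_{\ell}\setminus\cR_{\ell})\cup\cR_{\ell}^{o}$ of $\Pi_{\cP}$ is carried to the corresponding Hall spread of $\Pi_{\cP''}$, while $\cD_{\eta}^{\psi}=\cD_{\eta}$; therefore $\Pi_{\cP}^{\psi}=\Pi_{\cP''}=\Pi_{\eps^{-1}(\eps(\cP)^g)}$, and since $\psi\in\cJ$ the two parallelisms are equivalent.

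The main obstacle is isolating the single collineation $\psi$ that simultaneously induces the prescribed reparametrization $u\mapsto uu_0$, $v\mapsto vv_0$ and stabilizes $\Sigma_{\eta}$: the diagonal entries are overdetermined by these two demands, and their compatibility rests precisely on the hypothesis $u_0,v_0\in\cU$, which makes the norm-one constraint $w^{q-1}=u_0/v_0$ solvable. Everything else reduces to routine coordinate checks, the only conceptual point being the commuting relation~(c), where invoking the uniqueness of the Baer involution is preferable to a brute-force computation.
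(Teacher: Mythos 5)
Your proposal is correct and follows essentially the same route as the paper: both exhibit an explicit diagonal projectivity stabilizing $\Sigma_{\eta}$, $\cD_{\eta}$ and $r_{U_1}$ that induces the reparametrization $u\mapsto uu_0$, $v\mapsto vv_0$ on the pairs of $\cP$ (the paper uses $\mathrm{diag}(1,c,u_0,c^qu_0)$ with $c^{q-1}=v_0/u_0$, which is projectively the same device as your $\mathrm{diag}(w,1,u_0w,v_0)$ with $w^{q-1}=u_0/v_0$). Your additional verifications, in particular the commutation with $\tau_{\eta}$ via uniqueness of the Baer involution, are correct details that the paper leaves implicit.
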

\begin{proof}
Let $g$ be represented by \eqref{mat1}. Then the projectivity of $\Sigma_{\eta}$ given by 
\begin{align*}
    & \begin{pmatrix}
        1 & 0 & 0 & 0 \\
        0 & c & 0 & 0 \\
        0 & 0 & u & 0 \\
        0 & 0 & 0 & c^q u
    \end{pmatrix}, 
\end{align*}
where $c$ is an element of $\F_{q^2}$ such that $c^{q-1} = \frac{v}{u}$, sends $\cP$ to $\eps^{-1}\left( \eps(\cP)^g \right)$.
\end{proof}
Let $\cP = \left\{\left(P_{\alpha_1 u_1}, \pi_{\alpha_1 v_1}\right), \dots, \left(P_{\alpha_{q+1} u_{q+1}}, \pi_{\alpha_{q+1} v_{q+1}}\right)\right\}$ be a good set and denote by 
\begin{align*}
    \cP^d := \eps^{-1}\left( \eps(\cP)^\delta \right) = \left\{\left(P_{\alpha_1 v_1}, \pi_{\alpha_1 u_1}\right), \dots, \left(P_{\alpha_{q+1} v_{q+1}}, \pi_{\alpha_{q+1} u_{q+1}}\right)\right\}.
\end{align*}
$\cP^d$ is also a good set, due to Proposition~\ref{prop4}, but $\Pi_{\cP}$ and $\Pi_{\cP^d}$ are not necessarily equivalent, as shown by Example~\ref{Beutelspacher} given below. 
\begin{exa}\label{Beutelspacher}
Fix $\alpha \in \cI$ and set 
\begin{align*}
\cP = \left\{ \left(P_{\alpha u}, \pi_{\alpha v}\right) \colon u \in \cU\right\}.
\end{align*}
Then it is easily seen that properties \eqref{property1}, \eqref{property2} are satisfied
%\begin{align*}
%    \left(P_{\alpha u_i}, \pi_{\alpha v_1}\right), \left(P_{\alpha u}, \pi_{\frac{u v_1}{u_i}} \right) \in \cP \implies u = u_i, \\
%    \left(P_{u_i}, \pi_{v_1}\right), \left(P_{\frac{u_i}{u}}, \pi_{u v_1}\right) \in \cP \implies u = 1,
%\end{align*}
and hence $\cP$ is a good set. In this case $\cL_{\cP}$ consists of the $q^2+q$ lines contained in the plane $\pi_{\alpha v}$, intersecting $\Sigma_{\alpha} \cap \pi_{\alpha v}$ in a Baer subline and distinct from $r_{U_1}$. The line-parallelism of $\Sigma_{\eta}$ obtained from $\cP$ via Theorem~\ref{main} coincides with the one constructed by Beutelspacher in \cite{beutelspacher1974parallelisms}. On the other hand, 
\begin{align*}
\cP^d = \left\{ \left(P_{\alpha v}, \pi_{\alpha u}\right) \colon u \in \cU\right\}
\end{align*}
is also a good set and $\cL_{\cP^d}$ consists of the $q^2+q$ lines through the point $P_{\alpha v}$, distinct from $r_{U_1}$, and intersecting $\Sigma_{\alpha}$ in a Baer subline. Therefore $\Pi_{\cP}$ and $\Pi_{\cP^d}$ are not equivalent.
\end{exa}

%\begin{exa}\label{new}
%Assume $q \not\equiv -1 \pmod{3}$, and let 
%\begin{align*}
%\cP = \left\{ \left(P_{u_1}, \pi_{u_1^2}\right), \left(P_{u_2}, \pi_{u_2^2}\right)\dots, \left(P_{u_{q+1}}, \pi_{u_{q+1}^2}\right) \right\}.
%\end{align*}
%Then 
%\begin{align*}
%    & \left(P_{u_i}, \pi_{u_i^2}\right), \left(P_{u}, \pi_{\frac{u u_i^2}{u_i}} \right) \in \cP \implies u = u_i, \\
%    & \left(P_{u_i}, \pi_{u_i^2}\right), \left(P_{\frac{u_i}{u}}, \pi_{u u_i^2}\right) \in \cP \implies u^3 = u^{q+1} = 1 \implies u = 1.
%\end{align*}
%In this case the lines of $\cL_{\cP}$ are on the $\frac{q+1}{\gcd(q-1,2)}$ planes $\pi_{v^2}$, $v \in \cU$, and each of them contains the lines of the $\gcd(q-1, 2)$ Baer subpencils $p\left(P_{v}, \pi_{v^2}\right)$, $p\left(P_{-v}, \pi_{v^2}\right)$, distinct from $r_{U_1}$.
%\end{exa}

\bigskip
{\footnotesize
\noindent\textit{Acknowledgments.}
%This research was partly supported by ...
The research was supported by the Italian National Group for Algebraic and Geometric Structures and their Applications (GNSAGA--INdAM). The second author research was supported by the INdAM - GNSAGA Project \emph{Tensors over finite fields and their applications}, number E53C23001670001 and by Bando Galileo 2024 – G24-216. The first author would like to dedicate this paper to the memory of his father.}

\bibliographystyle{abbrv}
\bibliography{biblio}

\begin{thebibliography}{10}

\bibitem{Betten}
A.~Betten.
\newblock The packings of {$\PG(3,3)$}.
\newblock {\em Des. Codes Cryptogr.}, 79(3):583--595, 2016.

\bibitem{beutelspacher1974parallelisms}
A.~Beutelspacher.
\newblock On parallelisms in finite projective spaces.
\newblock {\em Geometriae Dedicata}, 3(1):35--40, 1974.

\bibitem{bose1980intersection}
R.~Bose, J.~Freeman, and D.~Glynn.
\newblock On the intersection of two {B}aer subplanes in a finite projective plane.
\newblock {\em Utilitas Math}, 17:65--77, 1980.

\bibitem{BruckBose}
R.~H. Bruck and R.~C. Bose.
\newblock The construction of translation planes from projective spaces.
\newblock {\em J. Algebra}, 1:85--102, 1964.

\bibitem{bruen1982intersection}
A.~A. Bruen.
\newblock Intersection of {B}aer subgeometries.
\newblock {\em Archiv der Mathematik}, 39(3):285--288, 1982.

\bibitem{D1972}
R.~H.~F. Denniston.
\newblock Some packings of projective spaces.
\newblock {\em Atti Accad. Naz. Lincei Rend. Cl. Sci. Fis. Mat. Nat. (8)}, 52:36--40, 1972.

\bibitem{J2008}
E.~Diaz, N.~L. Johnson, and A.~Montinaro.
\newblock Coset switching in parallelisms.
\newblock {\em Finite Fields Appl.}, 14(3):766--784, 2008.

\bibitem{dye1991spreads}
R.~Dye.
\newblock Spreads and classes of maximal subgroups of $\mathrm{GL}_n (q)$, $\mathrm{SL}_n(q)$, $\mathrm{PGL}_n(q)$ and $\mathrm{PSL}_n(q)$.
\newblock {\em Annali di Matematica Pura ed Applicata}, 158:33--50, 1991.

\bibitem{giuzzi2001collineation}
L.~Giuzzi.
\newblock Collineation groups of the intersection of two classical unitals.
\newblock {\em Journal of Combinatorial Designs}, 9(6):445--459, 2001.

\bibitem{hirschfeld1985finite}
J.~W.~P. Hirschfeld.
\newblock {\em Finite projective spaces of three dimensions}.
\newblock Oxford Mathematical Monographs, Oxford Science Publications, The Clarendon Press, Oxford University Press, New York, 1985.

\bibitem{hirschfeld1998projective}
J.~W.~P. Hirschfeld.
\newblock {\em Projective geometries over finite fields}.
\newblock Oxford Mathematical Monographs, Oxford Science Publications, The Clarendon Press, Oxford University Press, New York, 1998.

\bibitem{J2000}
N.~L. Johnson.
\newblock Some new classes of finite parallelisms.
\newblock {\em Note Mat.}, 20(2):77--88, 2000/01.

\bibitem{J2003}
N.~L. Johnson.
\newblock Parallelisms of projective spaces.
\newblock {\em J. Geom.}, 76(1-2):110--182, 2003.
\newblock Combinatorics, 2002 (Maratea).

\bibitem{J2010}
N.~L. Johnson.
\newblock {\em Combinatorics of spreads and parallelisms}, volume 295 of {\em Pure and Applied Mathematics (Boca Raton)}.
\newblock CRC Press, Boca Raton, FL, 2010.

\bibitem{lunardon1999normal}
G.~Lunardon.
\newblock Normal spreads.
\newblock {\em Geometriae Dedicata}, 75:245--261, 1999.

\bibitem{PW1998}
T.~Penttila and B.~Williams.
\newblock Regular packings of {${\rm PG}(3,q)$}.
\newblock {\em European J. Combin.}, 19(6):713--720, 1998.

\bibitem{Prince1998}
A.~R. Prince.
\newblock The cyclic parallelism of {${\rm PG}(3,5)$}.
\newblock {\em European J. Combin.}, 19(5):613--616, 1998.

\bibitem{segre1964teoria}
B.~Segre.
\newblock Teoria di galois, fibrazioni proiettive e geometrie non desarguesiane.
\newblock {\em Annali di Matematica Pura ed Applicata}, 64(1):1--76, 1964.

\bibitem{Topalova}
S.~Topalova and S.~Zhelezova.
\newblock New regular parallelisms of {$\PG(3,5)$}.
\newblock {\em J. Combin. Des.}, 24(10):473--482, 2016.

\bibitem{van2016desarguesian}
G.~Van~de Voorde.
\newblock Desarguesian spreads and field reduction for elements of the semilinear group.
\newblock {\em Linear Algebra and its Applications}, 507:96--120, 2016.

\end{thebibliography}

\end{document}